\newcommand{\ZZ}{\mathbb{Z}}
\newcommand{\QQ}{\mathbb{Q}}
\newcommand{\RR}{\mathbb{R}}
\DeclareMathOperator{\cone}{cone}
\DeclareMathOperator{\conv}{conv.hull}
\DeclareMathOperator{\affh}{affine}
\DeclareMathOperator{\linh}{span}
\DeclareMathOperator{\width}{width}
\DeclareMathOperator{\size}{size}
\DeclareMathOperator{\poly}{poly}
\DeclareMathOperator{\relint}{rel.int}
\DeclareMathOperator{\inter}{int}
\DeclareMathOperator{\dom}{dom}
\DeclareMathOperator{\const}{const}
\DeclareMathOperator{\sgn}{sgn}
\DeclareMathOperator{\bord}{br}
\DeclareMathOperator{\relbr}{rel.br}
\DeclareMathOperator{\vol}{vol}
\DeclareMathOperator{\Half}{H}
\DeclareMathOperator{\El}{E}
\DeclareMathOperator{\MIN}{M}
\newcommand\GenFun[2]{F(#1,#2)}
\newcommand\RMIN[3]{M_{#1}^{#2}(#3)}
\newcommand\CONV[1]{Conv_{#1}}
\newcommand\QC[1]{QConv_{#1}}
\newcommand\SQC[1]{SQConv_{#1}}
\newcommand\QCP[1]{QCPoly_{#1}}
\newcommand\CN[1]{Conic_{#1}}
\newcommand\DCN[1]{DConic_{#1}}
\newcommand\EDCN[1]{EvenDConic_{#1}}
\newcommand\ECN[1]{EvenConic_{#1}}
\newcommand\I[1] {^{(#1)} }
\title{ On the complexity of quasiconvex integer minimization problem }
\author{A.~Yu.~Chirkov, D.~V.~Gribanov, D.~S.~Malyshev, P.~M.~Pardalos, S.~I.~Veselov, N.~Yu.~Zolotykh}
\institute{A.~Yu.~Chirkov \at Lobachevsky State University of Nizhny Novgorod, 23 Gagarina Avenue, Nizhny Novgorod, 603950, Russian Federation\\
\email{chir7@yandex.ru}
\and D.~V.~Gribanov \at Lobachevsky State University of Nizhny Novgorod, 23 Gagarina Avenue, Nizhny Novgorod, 603950, Russian Federation\\
National Research University Higher School of Economics, 25/12 Bolshaja Pecherskaja
Ulitsa, Nizhny Novgorod, 603155, Russian Federation\\
\email{dimitry.gribanov@gmail.com}
\and D.~S.~Malyshev \at National Research University Higher School of Economics, 25/12 Bolshaja Pecherskaja
Ulitsa, Nizhny Novgorod, 603155, Russian Federation\\
\email{dsmalyshev@rambler.ru}
\and P.~M.~Pardalos \at University of Florida, 401 Weil Hall, P.O. Box 116595,
Gainesville, FL 326116595, USA
\at National Research University Higher School of Economics, 25/12 Bolshaja Pecherskaja
Ulitsa, Nizhny Novgorod, 603155, Russian Federation\\
\email{p.m.pardalos@gmail.com}
\and S.~I.~Veselov \at Lobachevsky State University of Nizhny Novgorod, 23 Gagarina Avenue, Nizhny Novgorod, 603950, Russian Federation\\
\email{sergey.veselov@itmm.unn.ru}
\and N.~Yu.~Zolotykh \at Lobachevsky State University of Nizhny Novgorod, 23 Gagarina Avenue, Nizhny Novgorod, 603950, Russian Federation\\
\email{Nikolai.Zolotykh@gmail.com}
}
\begin{document}

\maketitle
\thispagestyle{empty}
\pagestyle{plain}

\begin{abstract}
In this paper, we consider the class of quasiconvex functions and its proper subclass of conic functions. The integer minimization problem of these functions is considered, assuming that the optimized function is defined by the comparison oracle. We will show that there is no a polynomial algorithm on $\log R$ to optimize quasiconvex functions in the ball of radius $R$ using only the comparison oracle. On the other hand, if the optimized function is conic, then we show that there is a polynomial on $\log R$ algorithm (the dimension is fixed). We also present an exponential on the dimension lower bound for the oracle complexity of the conic function integer optimization problem. Additionally, we give examples of known problems that can be polynomially reduced to the minimization problem of functions in our classes.
\end{abstract}

\section{Introduction}

\subsection{Motivation and related papers}

We consider the following minimization problem:
\begin{align}
&f_0(x) \to \min\label{IntroProb}\\
&\begin{cases}
f_i(x) \leq 0 \quad i=1,2,\dots,m\\
x \in \ZZ^n,
\end{cases}\notag
\end{align} where $f_0$ and $f_i$ are quasiconvex functions. Let $D = \{x \in \RR^n: f_i(x) \leq 0 \}$ and $D \subseteq r \cdot B_{\infty}^n$, where $r \cdot B_{\infty}^n$ is the ball of radius $r$ in $\RR^n$ related to the Chebyshev norm. The works of Oertel, Wagner, Weismantel \cite{OWW12} and Dadush, Peikert, Vempala \cite{DPV11} give polynomial on $\log r$ algorithms (the dimension is fixed) to solve the problem in the case, when the set $D$ is equipped by the separating hyperplane oracle. The thesis \cite{DADDIS} gives an $(O(n))^n\poly(\log r)$ algorithm to solve this problem and a good survey on related topics. The paper \cite{OWW12} states that a polynomial on $\log r$ algorithm (if the dimension is fixed) can be simply obtained for the following three oracles: the feasibility oracle, the linear integer optimization oracle, and the separation hyperplane oracle. Moreover, a result of \cite{OWW12} affects mixed integer setting. The paper \cite{CENTER} of Basu and Oertel and the thesis \cite{OERTELDIS} of Oertel give a novel approach in integer convex optimization based on the concept of centerpoints generalized to the integer case. These works additionally give the $\Omega(2^n\, \log r)$ lower bound on the complexity of algorithms that are based on the separating hyperplane oracle. See also the books \cite{50YEARS,SCH} for more detailed survey on integer programming.

The historically first work that gives a polynomial integer programming algorithm in a fixed dimension is the work \cite{LEN} of Lenstra. It considers the mixed integer linear programming problem. Next, Frank and Tardos in \cite{FRTAR} and Kannan in \cite{KAN} improved the complexity bounds from \cite{LEN}. The case, when the constraints are expressed by quasiconvex polynomials, was solved in the work \cite{HEINZ} of Heinz. This result was improved in the work \cite{HILDKOP} of Hildebrand and K\"oppe, see also the survey \cite{KOEPPE} of K\"oppe. The problem of recognizing the quasiconvexity of a given polynomial is NP-complete, due to the paper \cite{NPHARD_CONV}. The paper \cite{KHAPOR} of Khachiyan and Porkolab gives an algorithm for the case, when the constraints are expressed as convex semialgebraic sets. Gaven\v{c}iak et al. give in \cite{GAVKNOP} a comprehensive review on the advances in solving convex integer programs from the last two decades. The paper \cite{EIZ_FIP} of Eisenbrand contains an algorithm for linear integer programming with the best known complexity in terms of the constraints number and an input encoding size.

In our work, we consider only algorithms that are based on the comparison oracle. For any pair of points $x,y \in \dom(f)$, the comparison oracle of a quasiconvex function $f$ determines one of the following two possibilities: $f(x) \leq f(y)$ or $f(x) > f(y)$. Our choice is motivated by the following facts. Firstly, the comparison oracle is simpler to implement than the separating hyperplane oracle. Secondly, we will show in this paper that there is no an algorithm solving the problem \eqref{IntroProb} with the comparison oracle, which is polynomial on $n$ or $\log r$. Due to results of \cite{DADDIS,DPV11,OERTELDIS,OWW12}, the last fact means that the problem with the separation hyperplane oracle can not be polynomially reduced to the problem with the comparison oracle. Finally, it is possible to present general subclasses of the quasiconvex functions class that allow to develop polynomial on $\log r$ algorithms, based on the comparison oracle.

This paper has two aims. The first is revealing new classes of functions that can be effectively optimized in a fixed dimension, including already known and important classes of functions. The second one is establishing exponential on the dimension lower bounds on the oracle complexity for the problem \eqref{IntroProb} with respect to some new classes.

\subsection{Content and results of this article}

In Section 2, we introduce two new classes of functions: conic functions and discrete conic functions. For the class of conic functions, we give several equivalent definitions and show that it includes the classes of strictly quasiconvex functions, convex functions, and quasiconvex polynomials. Discrete conic functions are similar to conic functions, but their domains are discrete sets. We will show that there is no natural extension of any discrete conic function to some conic function and give a criteria for situation, when it is possible.

In Section 3, we give some general tools that are helpful for us to prove lower complexity bounds. Additionally, in this section, we show that the problem \eqref{IntroProb} with respect to the classes of conic functions or discrete conic functions can be polynomially reduced to its unconstrained variant.

In Section 4, we present a very simple $(2r)^n$ lower bound on the comparison-based complexity of \eqref{IntroProb}. After that, we give $\Omega(2^n \log r)$ lower bounds to the problem's \eqref{IntroProb} complexity with respect to the classes of conic functions, discrete conic functions, and their even (symmetric) versions.

Finally, in Section 5, we consider examples of concrete problems that can be formulated as optimization problems involving conic or discrete conic functions and give a polynomial on $\log r$ comparison oracle-based algorithm for the conic function integer minimization problem. There is a way how to minimize a convex continuous function using only the so-called zero-order oracle, that is the oracle computing the function value in any given point. Yudin and Nemirovskii (see \cite[pp.\,342--348]{NEMIR}, \cite{YUDIN}) give a polynomial on the dimension and $\log r$ algorithm for continuous minimization of convex continuous functions using calls to the zero-order oracle. Using the results of Gr\"otschel, Lov\'asz, Schrijver and Yudin, Nemirovskii \cite{GRLOVSCH,YUDIN_EVAL} about the equivalence between week separation and week optimization, we can build a week separation oracle for the sets, like $\{x \in \RR^n : f(x) \leq \epsilon \}$, where $f$ is conic. Due to the results of the thesis \cite{DADDIS}, it leads us to an algorithm with the comparison oracle-based complexity $(O(n))^n\poly(\log r)$. Additionally, the result of \cite{OWW12} leads us to an algorithm with a complexity, polynomial on $\log r$, for the mixed integer variant of the problem.

But, for the best of our knowledge, the approach of Yudin and Nemirovskii \cite{NEMIR,YUDIN} can be applied only for convex functions. Since the class of conic functions is not equivalent to the class of convex functions, then the sequence of results described above can not be applied to the conic function integer minimization problem. To this end, we develop our Lenstra's type algorithm for this problem that is based on ideas from the papers \cite{DADDIS,DPV11,HILDKOP,LEN,OERTELDIS,ICONV_MIN_REV}.

We do not present a polynomial on $\log r$ oracle-based algorithm for minimization of discrete conic functions. The papers \cite{CHIR08,2DIMMIN} present these algorithms for the dimension $2$.

\subsection{Future work and remarks}

In Section 3, we give a polynomial on $\log r$ algorithm for the conic function integer minimization problem.  But the analysis of the algorithm is rough, and it is a good idea to make it more accurate in future works. 

It is an interesting open problem to develop weak separation hyperplane oracle for the class of conic functions. The existence of such algorithm gives opportunity to apply results of from the thesis \cite{DADDIS} of Dadush that give an algorithm with the best known complexity.

Additionally, in this work, we do not present algorithms for integral minimization of discrete conic functions, we only note about algorithms for the dimension $2$ from \cite{CHIR08,2DIMMIN}. The difficulty to build such algorithm for any fixed dimension is the fact that we can ask comparison oracle only in points of some discrete set and the general separation oracle is not helpful in this situation. We are planning to work on these problems in the future. 

We also note that our algorithm can be helpful to design FPT-algorithms for some combinatorial optimization problems. See the papers \cite{FIXEDP,GAVKNOP} for details.

\section{Definitions, notation and some preliminary results}

The $n$-dimensional ball of a radius $r > 0$, centered in a point $y \in \RR^n$ and related to the norm $l_p$ is denoted by $y + r \cdot B_p^n$. In other words,
$$
y + r \cdot B_p^n = \{x \in \RR^n : ||x-y||_p \leq r\}.
$$

For a matrix $B \in \RR^{m \times n}$, $\cone(B) = \{B t : t \in \RR_+^{n} \}$ is the \emph{cone spanned by columns of} $B$, $\conv(B) = \{B t : t \in \RR_+^{n},\, \sum_{i=1}^{n} t_i = 1  \}$ is the \emph{convex hull spanned by columns of} $B$, $\affh(B) = \{B t : t \in \RR^n,\, \sum_{i=1}^{n} t_i = 1\}$ is the \emph{affine hull spanned by columns of} $B$, and $\linh(B) = \{B t : t \in \RR^n \}$ is the \emph{linear hull spanned by columns of} $B$. If $D \subseteq \RR^n$, then the symbol $\linh(D)$ designates the linear hull, based on the points of $D$. The same is true for other types of the hulls.

For a set $D \subseteq \RR^n$, $\inter(D)$ and $\bord(D)$ are the sets of \emph{interior} and \emph{boundary points} of $D$, respectively. The sets of \emph{interior} and \emph{boundary points related to $\affh(D)$} are denoted by $\relint(D)$ and $\relbr(D)$, respectively.

The set of integer values, started from $i$ and ended in $j$, is denoted by $i:j = \{i,i+1,\dots,j\}$. For a vector $x \in \RR^n$, $x_i$ is the $i$-th component of $x$. The interval between points $y,z \in \RR^n$ is denoted by $$[y,z] = \{x = t y + (1-t)z : 0 \leq t \leq 1\}.$$ We will use the symbol $(y,z)$ to define an open interval.  The set $D$ is said to be \emph{convex} if
$\forall x,y \in D$ $\quad[x,y] \subseteq D$. For a function $f$, $\dom(f)$ is the domain of $f$. For any $y \in \dom(f)$, $\Half^\leq_f(y)$ is the set of contour lines for $f$. In other words,
$$\Half^\leq_f(y) = \{x \in \dom(f) :\: f(x) \leq f(y) \}.$$

The sets $\Half^<_f(y)$, $\Half^=_f(y)$ are defined in a similar way. The set of all minimum points of a function $f$ is denoted by $\MIN_{1}(f)$. If it is not defined, we will put $\MIN_{1}({f}) = \emptyset$. Similarly, $\MIN_{i}({f})$ is the set of all $i$-th minimum points of $f$. The set of all minimum points of a function $f$ on a set $D$ is denoted by $\RMIN{1}{D}{f}$. Similarly, $\RMIN{i}{D}{f}$ is the set of all $i$-th minimum points of $f$ on $D$.

Let us consider the set of functions $f : \dom(f) \to \RR$, such that $\dom(f) \subseteq \RR^n$ is convex. A function $f$ is said to be \emph{quasiconvex} if $$\forall x,y \in \dom(f),\,\forall z \in (x,\,y) \quad f(z) \leq \max \{f(x),\,f(y)\}.$$ A function $f$ is said to be \emph{strictly quasiconvex} if $$\forall x,y \in \dom(f),\, \forall z \in (x,\,y) \quad f(z) < \max \{f(x),\,f(y)\}.$$ A function $f$ is said to be \emph{convex} if $$\forall x,y \in \dom(f),\,\forall t \in (0,\,1) \quad f(t x + (1-t) y) \leq t f(x) + (1-t) f(y).$$ We will denote these classes by the symbols $\QC{n}$, $\SQC{n}$ and $\CONV{n}$ respectively. Additionally, we denote by $\QCP{n}$ the class of quasiconvex polynomials of all possible non-zero degrees with real coefficients.

\begin{note}\label{AlternativeDefQC}
Let $T \subseteq \dom(f)$. It is known that the definition of a quasiconvex function is equivalent to the following definition
\[
\forall x \in \conv(T) \quad f(x) \leq \max\limits_{y \in T} f(y),
\]
and the definition of a strictly quasiconvex function is equivalent to following definition
\[
\forall x \in \conv(T) \setminus T \quad f(x) < \max\limits_{y \in T} f(y).
\]
\end{note}

For points $x\I1,\, x\I2,\, \dots,\, x\I{k} \in \RR^n$, the set
\begin{equation}\label{ConeSymbol}
x\I{k} + \cone(x\I{k}-x\I1,\, \dots,\, x\I{k}-x\I{k-1})
\end{equation} is denoted as $\cone(x\I1,\, x\I2,\, \dots,\, x\I{k-1} | x\I{k})$.

\begin{definition}\label{CNDef}
Let $f : \dom(f) \to \RR$, where $\dom(f)$ is convex.

The function $f$ is \emph{conic} if $\forall y,z \in \dom(f)$ and $\forall t \geq 0$, such that $f(y) \leq f(z)$ and $z + t (z-y) \in \dom(f)$, we have $$f(z + t (z-y)) \geq f(z).$$
\end{definition}

\begin{note}\label{CNSubsetQC} Clearly, the class $\CN{n}$ of conic functions is a subclass of the quasiconvex functions class, that is $\CN{n} \subset \QC{n}$. The inclusion is strict, a counterexample is the quasiconvex function $\sgn(x_1)$.
\end{note}

The next theorem gives two additional ways to define the class of conic functions.
\begin{theorem}\label{CNEquivalentDefinitions}
Let $f : \dom(f) \to \RR$, where $\dom(f) \subseteq \RR^n$ is convex. The following definitions are equivalent:
\begin{enumerate}
\item For any pair of points $y,z \in \dom(f)$ and $\forall t \geq 0$, such that $f(y) \leq f(z)$ and $z + t (z-y) \in \dom(f)$, we have $$f(z + t (z-y)) \geq f(z).$$
\item For any set of points $x\I1, x\I2, \dots, x\I{k},\,y \in \dom(f)$, such that $$f(x\I1) \leq f(x\I2) \leq \dots \leq f(x\I{k})\text{ and }$$ $$y \in \cone(x\I1, x\I2, \dots, x\I{k-1} | x\I{k}),$$ the inequality $f(y) \geq f(x\I{k})$ holds.
Furthermore, we can assume that the points $x\I1, x\I2, \dots, x\I{k}$ are in general position, i.e. no hyperplane contains more than $n$ of them.
\item For any $x \in \dom(f)$, the set $\Half^\leq_f(x)$ is convex (which is equivalent to the quasiconvexity of the function $f$) and
$$
\forall x \in \dom(f) \setminus \MIN_{1}({f}) \quad \Half^=_f(x) \subseteq \relbr(\Half^\leq_f(x)).
$$ If the set $\MIN_{1}({f})$ is not defined, we will put it to be empty.
\end{enumerate}

Figure 1 gives an illustration for the first two equivalent definitions.
\end{theorem}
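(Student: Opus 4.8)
The plan is to prove the cycle $(2)\Rightarrow(1)\Rightarrow(2)$ and then $(1)\Rightarrow(3)\Rightarrow(1)$; the addendum about general position then comes for free. Everything rests on the fact, recorded in Note~\ref{CNSubsetQC} (or checked in two lines: if $f(x)\le f(y)$ and $v\in(x,y)$, then $y=v+t(v-x)$ for some $t\ge0$, so $f(v)>f(y)$ would violate Definition~\ref{CNDef}), that a function satisfying~(1) is quasiconvex, so that $\Half^\le_f(x)$ and $\Half^<_f(x)$ are convex for every $x$; in direction~(3) this convexity is part of the hypothesis. Now $(2)\Rightarrow(1)$ is just the case $k=2$ of~(2) with $x\I1:=y$, $x\I2:=z$, because $\cone(y\mid z)=\{z+t(z-y):t\ge0\}$; since two points trivially satisfy the general‑position requirement, even the general‑position form of~(2) yields~(1). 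For $(1)\Rightarrow(2)$, take $x\I1,\dots,x\I k,y\in\dom(f)$ with $f(x\I1)\le\dots\le f(x\I k)$ and $y=x\I k+\sum_{i<k}\lambda_i(x\I k-x\I i)$, $\lambda_i\ge0$; if $\Lambda:=\sum_i\lambda_i=0$ then $y=x\I k$, otherwise $p:=\sum_{i<k}(\lambda_i/\Lambda)x\I i\in\conv(x\I1,\dots,x\I{k-1})$ and $y=x\I k+\Lambda(x\I k-p)$, so Note~\ref{AlternativeDefQC} gives $f(p)\le f(x\I{k-1})\le f(x\I k)$ and~(1) applied to $(p,x\I k)$ with scalar $\Lambda$ gives $f(y)\ge f(x\I k)$. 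As this uses no general‑position hypothesis, the unrestricted and restricted forms of~(2) are both equivalent to~(1).

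For $(1)\Rightarrow(3)$ it remains to check the contour inclusion. Suppose instead that some $x\notin\MIN_{1}({f})$ admits $w\in\Half^=_f(x)\cap\relint\big(\Half^\le_f(x)\big)$; put $c:=f(x)=f(w)$, $C:=\Half^\le_f(x)$, and choose $u\in\dom(f)$ with $f(u)<c$ (possible, $x$ not being a minimum point), so $u\in C$. By the standard description of the relative interior of a convex set there is $\mu>1$ with $w':=w+(\mu-1)(w-u)\in C$, i.e.\ $f(w')\le c$. Since $f(u)\le f(w)$, applying~(1) to $(u,w)$ with scalar $\mu-1$ gives $f(w')\ge f(w)=c$, hence $f(w')=c$; and since $f(w')=f(w)$ and $w+\tfrac{1}{\mu-1}(w-w')=u\in\dom(f)$, applying~(1) to $(w',w)$ with scalar $\tfrac{1}{\mu-1}$ gives $f(u)\ge f(w)=c$, contradicting $f(u)<c$. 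Hence $\Half^=_f(x)\subseteq\relbr\big(\Half^\le_f(x)\big)$ for every non‑minimum $x$, which together with quasiconvexity is~(3).

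For $(3)\Rightarrow(1)$ I would argue by contradiction. If~(1) failed, there would be $y,z$ and $t>0$ with $f(y)\le f(z)=:c$, $w:=z+t(z-y)\in\dom(f)$, $f(w)<c$. Then $z\in(y,w)$; quasiconvexity on $[y,w]$ forces $f(y)=c$; inspecting $f$ on the line $L$ through $y,z,w$ (a one‑variable quasiconvex function with value $c$ at $y,z$ and $<c$ at $w$) forces $f\equiv c$ on $[y,z]$; and $z\notin\MIN_{1}({f})$. Pick $u\in\relint\big(\Half^<_f(z)\big)$ (nonempty, containing $w$). Using~(3) one checks $\affh\big(\Half^<_f(z)\big)=\affh\big(\Half^\le_f(z)\big)$ --- otherwise $\relint\big(\Half^\le_f(z)\big)$ would meet $\Half^=_f(z)$, against~(3) --- so $u\in\relint\big(\Half^\le_f(z)\big)$ and $f(u)<c$; when $\Half^\le_f(z)$ is one‑dimensional this already yields the contradiction, since then $z$ (lying strictly between $y,w$) is a relative‑interior point of $\Half^\le_f(z)$. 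Otherwise we may re‑choose $u\notin L$ and pass to the affine plane $P$ spanned by $u$ and $L$ (as $\relint\big(\Half^\le_f(z)\big)\cap P\ne\emptyset$, relative interior commutes with intersection by $P$): it then suffices to produce a point $w^*\in\Half^\le_f(z)\cap P$ on the side of $L$ opposite $u$, since then the triangles $\conv\{u,y,w\}$ and $\conv\{w^*,y,w\}$ cover a neighbourhood of $z$ in $P$, so $z\in\relint\big(\Half^\le_f(z)\cap P\big)=\relint\big(\Half^\le_f(z)\big)$, again contradicting~(3).

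The step I expect to be the main obstacle is precisely the remaining case of $(3)\Rightarrow(1)$, in which no such $w^*$ exists, i.e.\ $L$ supports $\Half^\le_f(z)\cap P$; then $y,z,w$ all lie on the relative boundary and the convex‑hull argument collapses. Closing it seems to require invoking~(3) at more than the single level $c$: one natural attempt is first to slide $(y,z,w)$ along $L$ to the ``jump point'' of $f|_L$ and then use~(3) at levels slightly below $c$ to exclude the supporting configuration; another is to analyse how $\Half^\le_f(\cdot)$ varies with the level near $c$. This is where the interplay between quasiconvexity of the sublevel sets and the contour condition~(3) must be used most carefully, and where the bulk of the technical work lies.
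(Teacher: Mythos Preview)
Your arguments for $(1)\Leftrightarrow(2)$ and $(1)\Rightarrow(3)$ are correct; the $(1)\Rightarrow(3)$ direction is in fact cleaner than the paper's, which routes through Definition~2 and the cone $\cone(\Half^{\le}_f(v)\mid z)$, whereas your two applications of~(1) along the single line through $u,w,w'$ reach the contradiction directly.

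For $(3)\Rightarrow(1)$ the gap is worse than you suggest. After reducing to $f(y)=f(z)=c$ you correctly derive $f\equiv c$ on $[y,z]$, so the whole segment $[y,z]$ lies in $\Half^{=}_f(z)\subseteq\relbr\big(\Half^{\le}_f(z)\big)$ by~(3). But a non-degenerate segment contained in the relative boundary of a convex set lies in a proper face, hence in a supporting hyperplane $\Pi$ (taken within $\affh(\Half^{\le}_f(z))$); since $L=\affh(y,z)\subseteq\Pi$ and your point $w\in L$, this forces $w\in\Pi\cap\Half^{\le}_f(z)\subseteq\relbr\big(\Half^{\le}_f(z)\big)$. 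In your plane $P$ (where $u\in\relint(\Half^{\le}_f(z))$, hence $u\notin\Pi$) this means $L=P\cap\Pi$ \emph{always} supports $\Half^{\le}_f(z)\cap P$: the point $w^{*}$ on the far side of $L$ never exists, so what you flag as the ``residual hard case'' is in fact the only case, and the two-dimensional reduction buys nothing. A complete argument will need a genuinely different idea here. For comparison, the paper's proof of $(3)\Rightarrow(1)$ asserts, without justification, that $x_t\in\relint\big(\Half^{\le}_f(y)\big)$ and then inscribes a ball around $x_t$ to force $z$ into the relative interior; the same supporting-hyperplane observation shows that this assertion fails in the contradiction setup, so the paper's argument passes over the very obstacle you identified rather than resolving it.
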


\begin{figure}[h]\label{CNDefFig}
\centering
\includegraphics[clip,scale=0.58]{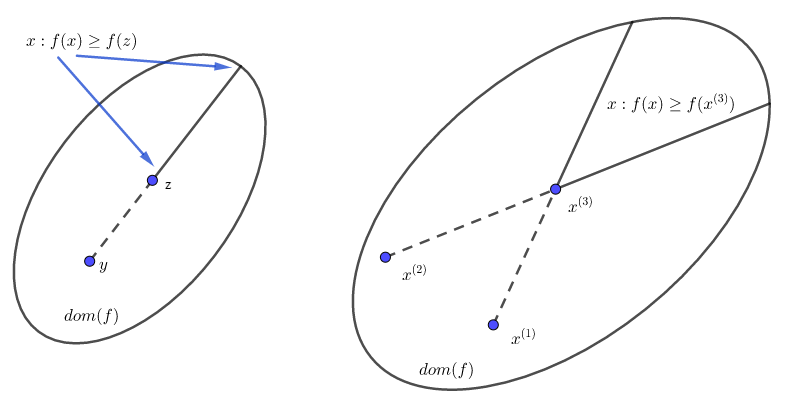}
\caption{An explanation of Theorem \ref{CNEquivalentDefinitions}. Definition 1 on the left and definition 2 on the right.}
\end{figure}

\begin{proof}
{\it The equivalence of 1 and 2.} Any triangulation of the polytope $\conv(x\I1, x\I2, \dots, x\I{k})$ induces a triangulation of the cone $$\cone(x\I1, x\I2, \dots, x\I{k-1} | x\I{k})$$ into simple cones. Thus, it can be assumed that the points $x\I1, x\I2, \dots, x\I{k}$ are in the general position.

Clearly, the first part follows from the second part. We will prove the converse statement. Let the points $x\I1, x\I2, \dots, x\I{k} \in \dom(f)$ be in general position and $\max\limits_{1\leq i \leq k} f(x\I{i}) \leq f(x\I{k})$. Let us fix $y \in \dom(f)\setminus\{x\I{k}\}$, such that $y \in \cone(x\I1, x\I2, \dots, x\I{k-1} | x\I{k})$. We show that the inequality $f(y) \geq f(x\I{k})$ is true. Consider the line $L$ passing through the points $y$ and $x\I{k}$. The line $L$ intersects the set $\conv(x\I1, x\I2, \dots, x\I{k-1})$ in some point $z$. The function $f$ is defined in the point $z$, because $\dom(f)$ is convex. The quasiconvexity of $f$ implies that $f(z) \leq f(x\I{k})$. By this fact and Definition 1, the inequality $f(y) \geq f(x\I{k})$ holds.

{\it The equivalence of 1 and 3.} The implication $1 \to 3$. Suppose that $\exists z \in \dom(f) \setminus \MIN_{1}({f})$, such that $z \in \relint(\Half^\leq_f(z))$. The definition of $z$ implies the existence of a point $v \in \dom(f)$, such that $f(v) < f(z)$. Let $B = \cone(\Half^\leq_f(v)|z)$. Since $z \in \relint(\Half^\leq_f(z))$, then $\exists u \in B \cap \Half^\leq_f(z)$ and $u \not= z$. By Definition 2, we have $f(u) \geq f(z)$, and, therefore, $f(u) = f(z)$. By Note \ref{CNSubsetQC}, the set $\Half^\leq_f(v)$ is convex. Therefore, the ray $\cone(u|z)$ intersects the set $\Half^\leq_f(v)$ in some point. By this fact and Definition 1, $$\forall x \in \cone(u|z) \quad f(x) \geq f(z).$$ The last inequality contradicts to the inequalities $$x \in \Half^\leq_f(v) \quad f(x) \leq f(v) < f(z).$$

The implication $3 \to 1$. Consider points $y,z \in \dom(f)$, such that $f(y) \leq f(z)$. The claim is $$\forall t \geq 0,\text{ such that } x_t = z + t (z-y) \in \dom(f), \quad f(x_t) \geq f(z).$$ If $z \in \MIN_{1}({f})$, then the inequality $f(x) \geq f(z)$ holds for all $x \in \dom(f)$. So, we suppose that $z \notin \MIN_{1}({f})$ and $f(x_t) < f(z)$. If $f(y) < f(z)$, then $f(z) > \max\{f(x_t),f(y)\}$, which contradicts to the quasiconvexity of $f$. Thus, $f(y) = f(z)$. Since $y,z \notin \MIN_{1}({f})$, we have $y,z \in \relbr(\Half^\leq_f(y))$ and $x_t \in \relint(\Half^\leq_f(y))$. Hence, there is a sphere $B=x_t + r \cdot B_{2}^n \cap \affh(\Half^\leq_f(y))$ of some non-zero radius, such that $B \cap \Half^{\leq}_{f}(y) = B$. Let us consider the set $M = \conv(y,B)$. The convexity of the set $\Half^{\leq}_{f}(y)$ implies $M \subseteq \Half^{\leq}_{f}(y)$. The point $z$ is an internal point of the segment $[y,x_t]$, and, therefore, $z$ is included in $M$ with some relative neighborhood. The last fact contradicts to the statement $z \in \relbr(\Half^{\leq}_{f}(y))$.
\end{proof}

The next theorem shows that the class $\CN{n}$ contains some important subclasses.
\begin{theorem}\label{CNInclusions}
The following strict inclusions hold:
\begin{enumerate}
\item $\SQC{n} \subset \CN{n} \subset \QC{n}$,
\item $\QCP{n} \subset \CN{n}$,
\item $\CONV{n} \subset \CN{n}$.
\end{enumerate}
\end{theorem}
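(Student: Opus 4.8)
The containment $\CN{n}\subset\QC{n}$ and the strictness of $\SQC{n}\subset\CN{n}\subset\QC{n}$ at the right-hand end (witnessed by $\sgn(x_1)$) are already recorded in Note~\ref{CNSubsetQC}, so the plan is to prove the three containments $\SQC{n}\subseteq\CN{n}$, $\CONV{n}\subseteq\CN{n}$, $\QCP{n}\subseteq\CN{n}$ together with the strictness of each. For all three I would verify the first equivalent definition of Theorem~\ref{CNEquivalentDefinitions}: given $y,z\in\dom(f)$ with $f(y)\leq f(z)$ and $t\geq 0$ with $x_t:=z+t(z-y)\in\dom(f)$, show $f(x_t)\geq f(z)$. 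If $y=z$ or $t=0$ then $x_t=z$ and there is nothing to prove, so assume $y\neq z$ and $t>0$. The single identity behind the whole argument is
\[
z=\frac{t}{1+t}\,y+\frac{1}{1+t}\,x_t,
\]
i.e.\ $z$ lies strictly between $y$ and $x_t$ on the line $\ell$ through them. Because quasiconvexity, strict quasiconvexity, convexity, and the property of being a univariate polynomial all pass to the restriction $g(s):=f\bigl(y+s(z-y)\bigr)$ of $f$ to $\ell$, each case reduces to a one-variable statement with $g(0)=f(y)\leq g(1)=f(z)$ and goal $g(1+t)\geq g(1)$.

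For $\SQC{n}\subseteq\CN{n}$: if $g(1+t)<g(1)$, then since $1\in(0,1+t)$ strict quasiconvexity of $g$ forces $g(1)<\max\{g(0),g(1+t)\}$, which contradicts $g(0)\leq g(1)$ and $g(1+t)<g(1)$. For $\CONV{n}\subseteq\CN{n}$: convexity of $g$ applied to the displayed combination gives $g(1)\leq\tfrac{t}{1+t}g(0)+\tfrac{1}{1+t}g(1+t)\leq\tfrac{t}{1+t}g(1)+\tfrac{1}{1+t}g(1+t)$, hence $g(1)\leq g(1+t)$. As for strictness of these two: $\SQC{n}\subsetneq\CN{n}$ is witnessed by a convex function constant on a segment, e.g.\ $f(x)=\max\{0,x_1^2-1\}$, which is conic by the third containment but not strictly quasiconvex; and $\CONV{n}\subsetneq\CN{n}$ is witnessed by a strictly quasiconvex non-convex function, e.g.\ $f(x)=\sqrt{\|x\|_2}$ (which is conic by the first containment).

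For $\QCP{n}\subseteq\CN{n}$ the same line reduction applies, and the heart of the matter is the one-variable lemma that I expect to be the main obstacle: \emph{a non-constant quasiconvex univariate polynomial is strictly quasiconvex}. Granting it, the argument for $\SQC{n}$ finishes the case (if $g$ is constant then $g(1+t)=g(1)$ trivially; otherwise $g$ is strictly quasiconvex). To prove the lemma I would argue by contradiction: if $g$ is quasiconvex, non-constant and not strictly quasiconvex, there exist $a<z<b$ with $g(z)=\max\{g(a),g(b)\}=:\alpha$ (the reverse inequality being automatic). Both $\{s:g(s)\leq\alpha\}$ and $\{s:g(s)<\alpha\}$ are intervals, since sub-level and strict sub-level sets of a quasiconvex function are convex. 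Taking whichever of $a,b$ attains $\alpha$, say $g(a)=\alpha$, the segment $[a,z]$ lies in $\{s:g(s)\leq\alpha\}$ and $g$, being a non-constant polynomial, is not constant on it, so $g<\alpha$ at some point of $(a,z)$; on the right of $z$ the same reasoning gives such a point if $g(b)=\alpha$, and if $g(b)<\alpha$ the endpoint $b$ already is one. Thus $g<\alpha$ at points on both sides of $z$ while $g(z)=\alpha$, contradicting convexity of $\{s:g(s)<\alpha\}$. (One could instead run this through the third equivalent definition of Theorem~\ref{CNEquivalentDefinitions}: for a quasiconvex polynomial $f$ and $x\notin\MIN_{1}(f)$, a point $z\in\relint(\Half^\leq_f(x))$ with $f(z)=f(x)$ would be an interior local maximum of $f$ along the line through $z$ and any $v$ with $f(v)<f(x)$, which the same univariate fact rules out, giving $\Half^=_f(x)\subseteq\relbr(\Half^\leq_f(x))$.)

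Finally, $\QCP{n}\subsetneq\CN{n}$ since $\CN{n}$ contains non-polynomial functions such as the convex, hence conic, function $e^{x_1}$. Putting the pieces together yields all three stated strict inclusions. To sum up the difficulty: the two containments $\SQC{n}\subseteq\CN{n}$, $\CONV{n}\subseteq\CN{n}$ and the strictness witnesses are routine once the line-restriction identity is set up, while the one genuinely non-trivial ingredient is the structural lemma on univariate quasiconvex polynomials, after which $\QCP{n}\subseteq\CN{n}$ is immediate.
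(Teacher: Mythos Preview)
Your proof is correct. For $\SQC{n}\subseteq\CN{n}$ and $\CONV{n}\subseteq\CN{n}$ you follow essentially the paper's argument: both use the identity $z=\frac{t}{1+t}y+\frac{1}{1+t}x_t$ and derive the conic inequality directly from strict quasiconvexity, respectively convexity. Your strictness witnesses differ from the paper's (the paper uses constants for $\SQC{n}\subsetneq\CN{n}$, $\log x_1$ for $\CONV{n}\subsetneq\CN{n}$, and $|x_1|$ for $\QCP{n}\subsetneq\CN{n}$), but yours are equally valid.

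The genuine difference is in $\QCP{n}\subseteq\CN{n}$. The paper works through the third equivalent definition of Theorem~\ref{CNEquivalentDefinitions}: it first shows that $\Half^{\leq}_f(z)$ is full-dimensional whenever $z\notin\MIN_1(f)$, and then proves $\Half^{=}_f(z)=\bord(\Half^{\leq}_f(z))$ by a geometric construction that picks $n-1$ points in general position inside a small ball around a putative interior level point, together with their reflections, and extracts a contradiction to quasiconvexity. You instead restrict to the line through $y$ and $z$ and reduce everything to the one-variable lemma ``a non-constant quasiconvex univariate polynomial is strictly quasiconvex,'' which you prove cleanly via convexity of the strict sublevel set $\{g<\alpha\}$ and the identity theorem for polynomials. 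Your route is shorter and more elementary; it avoids the dimensionality discussion and the general-position construction entirely, and it makes transparent that the only special property of polynomials used is that they are not constant on any interval unless identically constant. The paper's approach, on the other hand, yields the slightly stronger side information $\bord(\Half^{\leq}_f(z))=\Half^{=}_f(z)$, though this is not needed for the theorem itself.
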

\begin{proof}
The inclusion $\CN{n} \subset \QC{n}$ was analyzed in Note \ref{CNSubsetQC}.

Let us prove that $\QCP{n} \subset \CN{n}$. To this end, we consider a polynomial $f \in \QCP{n}$. First of all, we will show that if $z \notin \MIN_{1}({f})$, then the set $\Half^{\leq}_{f}(z)$ is full-dimensional. That is
$\dim(\Half^{\leq}_{f}(z)) = n$. Suppose that it is not true. By the definition of the point $z$, there is a point $y \in \Half^{\leq}_{f}(z)$, such that $f(y) < f(z)$. By the continuity argument, $\forall \epsilon > 0$ there is a ball $B =y + r \cdot B_{2}^n$ with some non-zero radius, such that $$\forall x \in B\quad |f(y)-f(x)| \leq \epsilon.$$ Choosing $\epsilon \leq f(z)-f(y)$ to be small enough, we have $B \subseteq \Half^{\leq}_{f}(z)$. The last inclusion contradicts to the fact that $\dim(B) = n$.

Let us prove that, for any polynomial $f \in \QCP{n}$ and for any point $z \notin \MIN_{1}({f})$, the equality $$\bord(\Half^{\leq}_{f}(z)) = \Half^{=}_{f}(z)$$  holds. The inclusion $\bord(\Half^{\leq}_{f}(z)) \subseteq \Half^{=}_{f}(z)$ follows from the continuity of the polynomial $f$. Let us prove the reverse inclusion. Suppose that $z \in \inter(\Half^{\leq}_{f}(z))$. Note that if $f(x) = \const$ on some $n$-dimensional convex set, then $f(x) \equiv \const$. The last fact contradicts to the definition of the class $\QCP{n}$. There is a ball $B =z + r \cdot B_{2}^n$ with some non-zero radius, such that $B \cap \Half^{\leq}_{f}(z) = B$. Let us choose points $u\I1,u\I2,\dots,u\I{n-1} \in B$, such that they are in the general position and $f(u\I{i}) \not= f(z)$, for any $i \in 1:(n-1)$. If such a choice is not possible, then all of the points $x \in B,\,f(x) \not= f(z)$ are contained in some affine subspace of the dimension strictly less than $n$, and, therefore, $f$ is a constant. Suppose that the choice is possible. Then let us consider the sets $X= \conv(u\I1,u\I2,\dots,u\I{n-1})$ and $Y = \conv(v\I1,v\I2,\dots,v\I{n-1})$, where the points $v\I{i}$ are the symmetry points for $u\I{i}$ with respect to $z$. The following two cases are possible:

1) For all $x \in \conv(z,Y)$, the equality $f(x) = f(z)$ holds. Then $f$ is a constant.

2) There is a point $y \in \conv(z,Y)$, such that $f(y) < f(z)$. Let us consider the line $L$, passing through the points $z,y$. The line $L$ intersects the set $X$ in a point $\hat y$. Since $\hat y \in X$, then, by the quasiconvexity of $f$, we have $f(\hat y) < f(z)$. Hence, $z \in [y,\hat y]$ and $f(z) > \max\{f(y),f(\hat y)\}$, which contradicts to the quasiconvexity of $f$. The inclusion $\QCP{n} \subset \CN{n}$ is strict, a counterexample is the function $f(x) = |x_1|$ that is clearly conic, but it is not a polynomial.

To prove the inclusion $\SQC{n} \subset \CN{n}$, suppose that there are points $y,z \in \dom(f)$, such that $f(y) \leq f(z)$. We need to prove that for $\forall t \geq 0$, such that $x_t = z + (z-y) t \in \dom(f)$, the inequality $f(x_t) \geq f(z)$ holds. Suppose to the contrary that $\exists t > 0$, such that $f(x_t) < f(z)$. The point $z$ is an internal point of the segment $[y,x_t]$. By the definition of the quasiconvexity, we have $f(z) < \max\{f(y), f(x)\}$. If $f(y) \leq f(x)$, then we have $f(z) < f(x)$, and if $f(y) > f(x)$, then we have $f(z) < f(y)$. In both cases we have a contradiction. The inclusion $\SQC{n} \subset \CN{n}$ is strict, because the class $\SQC{n}$ does not contain constants.

To prove the inclusion $\CONV{n} \subset \CN{n}$, suppose that there are points $y,z \in \dom(f)$, such that $f(y) \leq f(z)$. We need to prove that for $\forall t \geq 0$, such that $x_t = z + (z-y) t \in \dom(f)$, the inequality $f(x_t) \geq f(z)$ holds. Since $x_t = z + (z-y) t$, we have $z = \frac{1}{1+t} x_t + \frac{t}{1+t} y$. By the definition of the convexity, we have $$
f(z) \leq \frac{1}{1+t} f(x_t) + \frac{t}{1+t} f(y)
$$ and
$$
f(x_t) \geq (1+t) f(z) - t f(y) \geq f(z).
$$ The inclusion $\CONV{n} \subset \CN{n}$ is strict, a counterexample is any concave, decreasing function, for example $\log x_1$.

\end{proof}

\begin{theorem}\label{CNOperations}
The class $\CN{n}$ is closed with respect to the following operations.
\begin{enumerate}
\item Let $f_i \in \CN{n}$ and $w_i \in \RR_+$, for any $i \in 1:k$. Then the function $g(x) = \max\limits_{i \in 1:k}\{w_i f_i(x)\}$ belongs to the class $\CN{n}$, where $\dom(g) = \bigcap\limits_{i \in 1:k} \dom(f_i)$.

\item Let $f \in \CN{n}$ and $h : \RR \to \RR$ be a conic, non-decreasing function. Then the function $g = h \circ f$ belongs to the class $\CN{n}$.

\item Let $f \in \CN{m}$, $A \in \RR^{m \times n}$ and $b \in \RR^m$. Then the affine image $g(x) = f(A x + b)$ belongs to the class $\CN{n}$.

\item Let $f_1, f_2 \in \CN{n}$ and $D = \dom(f_1) \cap \dom(f_2)$. Then the function $g(x) = \binom{f_1(x)}{f_2(x)} : D \to \RR^2$ is conic with respect to the lexicographical order in $\RR^2$. 
\end{enumerate}
\end{theorem}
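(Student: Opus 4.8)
The plan is to verify the four closure properties one at a time, working directly from Definition~\ref{CNDef} and, where convenient, from the equivalent forms in Theorem~\ref{CNEquivalentDefinitions}. In each case I would first observe that the claimed domain is convex, which is automatic: a finite intersection of convex sets in items~1 and~4, the preimage of a convex set under an affine map in item~3, and simply $\dom(f)$ in item~2. After that, the proof obligation is always of the same shape --- given $y,z$ in the domain with the function at $y$ no larger than at $z$, a scalar $t\ge 0$, and $x_t=z+t(z-y)$ in the domain, show the function at $x_t$ is at least its value at $z$ --- and the work is to reduce this to conicity of the building blocks.

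Items~1 and~3 are routine. For item~3, set $y'=Ay+b$ and $z'=Az+b$; then $Ax_t+b=z'+t(z'-y')$, so $f(y')\le f(z')$ and conicity of $f$ at the pair $(y',z')$, parameter $t$, give exactly $g(x_t)=f(Ax_t+b)\ge f(z')=g(z)$. For item~1, pick any index $k$ with $g(z)=w_kf_k(z)$; since $w_kf_k(y)\le g(y)\le g(z)=w_kf_k(z)$, if $w_k>0$ we get $f_k(y)\le f_k(z)$, hence $f_k(x_t)\ge f_k(z)$ by conicity of $f_k$ and $g(x_t)\ge w_kf_k(x_t)\ge w_kf_k(z)=g(z)$, whereas if $w_k=0$ then $g(z)=0=w_kf_k(x_t)\le g(x_t)$.

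For item~2 with $g=h\circ f$: if $f(y)\le f(z)$, conicity of $f$ gives $f(x_t)\ge f(z)$ and the monotonicity of $h$ finishes. The only other case is $f(y)>f(z)$ with $h(f(y))\le h(f(z))$; monotonicity then forces $h(f(y))=h(f(z))$, and this is exactly where conicity of $h$ (rather than mere monotonicity) is needed: applied to the pair $\big(f(y),f(z)\big)$, whose $h$-values agree, it yields $h\big(f(z)+s(f(z)-f(y))\big)\ge h(f(z))$ for all $s\ge 0$, that is $h(c)\ge h(f(z))$ for every $c\le f(z)$ (because $f(z)-f(y)<0$); together with monotonicity of $h$ on $[f(z),\infty)$ this gives $h(f(x_t))\ge h(f(z))$ no matter where $f(x_t)$ sits relative to $f(z)$.

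Item~4 is the part I expect to be the real obstacle; here ``conic'' is understood with the lexicographic order $\le_{\mathrm{lex}}$ on $\RR^2$ substituted into Definition~\ref{CNDef}, and all points stay in $D=\dom(f_1)\cap\dom(f_2)$. Assume $t>0$ (the case $t=0$ being trivial) and $g(y)\le_{\mathrm{lex}}g(z)$; then $f_1(y)\le f_1(z)$, so conicity of $f_1$ gives $f_1(x_t)\ge f_1(z)$. If $f_1(x_t)>f_1(z)$ then $g(x_t)>_{\mathrm{lex}}g(z)$ and we are done; the delicate case is $f_1(x_t)=f_1(z)$. The nonobvious step is to run conicity of $f_1$ \emph{backwards}: from $x_t=z+t(z-y)$ one has $y=z+\tfrac1t(z-x_t)$, so conicity of $f_1$ at the pair $(x_t,z)$ --- whose $f_1$-values are equal --- and parameter $\tfrac1t$ forces $f_1(y)\ge f_1(z)$, hence $f_1(y)=f_1(z)$. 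With the first coordinates of $g(y)$ and $g(z)$ now equal, the hypothesis $g(y)\le_{\mathrm{lex}}g(z)$ upgrades to $f_2(y)\le f_2(z)$, and conicity of $f_2$ at $(y,z)$, parameter $t$, gives $f_2(x_t)\ge f_2(z)$, so $g(x_t)=(f_1(z),f_2(x_t))\ge_{\mathrm{lex}}(f_1(z),f_2(z))=g(z)$. Everything except that ``reverse'' use of conicity of $f_1$ is bookkeeping.
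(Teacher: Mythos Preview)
Your proof is correct and follows essentially the same approach as the paper. The only notable difference is organizational: for item~1 you pick an index achieving the maximum at $z$ and argue in one step, whereas the paper first reduces the weights via item~2 and then proves $\max\{f_1,f_2\}$ is conic by a case split on which function attains the maximum at $y$ and at $z$; for item~4 both you and the paper use the same ``reverse'' application of conicity of $f_1$ (writing $y=z+\tfrac1t(z-x_t)$), just with the case split placed at a different point.
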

\begin{proof}
{\bf Let us proof the proposition 2.} If $h(f(z)) > h(f(y))$, then by the non-decreasing property and the definition of conic functions we have $f(z) > f(y)$ and $f(z + t (z-y)) \geq f(z)$ for any $t \geq 0$, such that $z + t (z-y) \in \dom(f)$.  Hence, $h (f(z + t (z-y))) \geq h(f(z))$. Consider the case $h(f(z)) = h(f(y))$ and two sub-cases $f(z) \geq f(y)$, and $f(z) < f(y)$. The sub-case $f(z) \geq f(y)$ is equivalent to the previous situation. Let $f(z) < f(y)$. Since $h$ is conic, then $h(x) \geq h(f(z)) = h(f(y))$ for any $x \in (-\infty,f(z)] \cup [f(y),+\infty)$. Since $h$ is non-decreasing, we have that $h(x) = h(f(z)) = h(f(y))$ for any $x \in (-\infty,f(y)]$, and consequently the value $f(z)$ is minimum point of $h$ on $\RR$. So, the inequality $h(f(z + t (z-y))) \geq h(f(z))$ trivially holds for any $t \in \RR$, such that $z + t (z-y) \in \dom(f)$.

{\bf Let us show that the function $g(x) = \max\{f_1(x),f_2(x)\}$ is conic if the functions $f$ and $g$ are conic. Together with the proposition 2 it proofs correctness of the proposition 1.} Let $g(z) \geq g(y)$ for some points $y,z \in \dom(f)$. Consider the cases \begin{enumerate}
\item $f_1(z) \leq f_2(z)$, $f_1(y) \leq f_2(y)$;
\item $f_1(z) \leq f_2(z)$, $f_1(y) > f_2(y)$.
\end{enumerate} The remaining cases are symmetric versions of the previous. In the {\bf case 1} we have $f_2(z) = g(z) \geq g(y) = f_2(y)$. Since $f_2$ is conic we have $g(z + t (z-y)) \geq f_2(z + t (z-y)) \geq f_2(z) = g(z)$ for any $t \geq 0$, such that $z + t (z-y) \in \dom(g)$, so $g$ is conic. In the {\bf case 2} we have $f_2(z) = g(z) \geq g(y) = f_1(y) > f_2(y)$. As in the {\bf case 1} we have $g(z + t (z-y)) \geq f_2(z + t (z-y)) \geq f_2(z) = g(z)$.

{\bf Let us proof the proposition 3.} Let $g(z) \geq g(y)$ for some points $y,z \in \dom(g)$, then $f(A z + b) \geq f(A y + b)$. Since $f$ is conic we have $g(z + t (z - y)) = f(A z + t A (z-y) + b) = f(A z + b + t((A z + b) -(A y + b)) ) \geq f(A z + b) = g(z)$, so $g$ is conic.

{\bf Let us proof the proposition 4.} Let $g(z) \succeq_{lex} g(y)$ for some points $y,z \in \dom(g)$. Consider the case $f_1(z) > f_1(y)$. It is easy to see that $f_1(z + t (z-y)) > f_1(z)$ for $t > 0$. Definitely, if $f_1(z + t (z-y)) = f_1(z)$ for $t > 0$, then, by the definition of conic function, we have the contradiction $f_1(y) \geq f_1(z)$. Hence, in the case $f_1(z) > f_1(y)$ we have $g(z + t (z-y)) \succ_{lex} g(z)$ for $t > 0$. Consider the case $f_1(z) = f_1(y)$, so $f_2(z) \geq f_2(y)$. Since $f_2$ is conic we have $f_2(z + t (z-y)) \geq f_2(z)$ for any $t \geq 0$. Hence, in both of the possible cases $f_1(z + t (z-y)) > f_1(z)$ and $f_1(z + t (z-y)) = f_1(z)$ we have $g(z + t (z-y)) \succeq_{lex} g(z)$.
\end{proof}

It is easy to show that the sum of two qusiconvex functions, defined on different domains, is quasiconvex. That is, if $f$ and $g$ are quasiconvex, then the function $h(x,y) = f(x) + g(y)$ is quasiconvex too. For the class $\CN{n}$, this property does not hold, counterexamples are the functions $f(x) = 3 x$ and $g(x) = -2^x$. The function $h(x,y) = 3x - 2^y$ is not conic. To prove that it suffices to consider the points $(0,0)$, $(1,1)$ and the ray, passing through these points. The sum of conic functions, defined on the same domain, can be a non-conic function. Again, counterexamples are the functions $f$ and $g$.

A function $f$ is said to be \emph{even} if $f(x) = f(-x)$, for any $x,(-x) \in \dom(f)$. A set $D \subseteq \RR^n$ is said to be \emph{discrete} if $\forall x \in D$ there is a ball $B =x + r \cdot B_2^n$ with $r > 0$, such that $D \cap B = \{x\}$. 


\begin{definition}\label{DCNDef}
Let $f : \dom(f) \to \RR$, where $\dom(f) \subset \RR^n$ is discrete.

The function $f$ is \emph{discretely conic} if for any points $y,\,x\I1,\, x\I2,\, \dots,\, x\I{k} \in \dom(f)$, such that \[f(x\I1) \leq f(x\I2) \leq \dots \leq f(x\I{k})\text{ and }\] \[y \in \cone(x\I1,\, x\I2,\, \dots,\, x\I{k-1} | x\I{k}),\] the inequality $f(y) \geq f(x\I{k})$ holds.

The class of discretely conic functions will be denoted by $\DCN{n}$.
\end{definition}

\begin{note}\label{ClassDefinitionsNote}
The classes $\CN{n}$ and $\DCN{n}$ contain functions with values in $\RR$. But actually, we can use any totally ordered set instead of $\RR$. For example, it can be the set $\RR^k$ with the lexicographical ordering.
\end{note}

Is it possible to extend any function in $\DCN{n}$ to a function in $\CN{n}$? The theorem~\ref{ExtensionToCNTh} below answers this question.
\begin{definition}\label{ExtensionToCNDef}
Let $f \in \DCN{n}$. The function $g \in \CN{n}$ is \emph{an extension of the function $f$}, if
\[
\dom(g) = \conv(\dom(f))\text{ and}
\]
\[
g(x) = f(x),\text{ for }x \in \dom(f).
\]
\end{definition}

Let us consider a function $f \in \DCN{n}$, such that $\forall \alpha \in \RR$ all the sets $\{x : f(x) \leq \alpha\}$ are finite. Since $\dom(f)$ is discrete and the sets $\{x : f(x) \leq \alpha\}$ are finite, then the sets $\MIN_{i}({f})$ are uniquely defined, for any $i \geq 1$. The sets $\MIN_{i}({f})$ are finite and form the unique partition of $\dom(f)$:
\[
\dom(f) = \bigcup\limits_{i \geq 1} \MIN_{i}({f}).
\] Let $z\I{i}$ be some representative of the set $\MIN_{i}({f})$ for $i \geq 1$.

\begin{theorem}\label{ExtensionToCNTh}
A function $f \in \DCN{n}$ has an extension in terms of Definition \ref{ExtensionToCNDef} if and only if $\forall i \geq 2$ the following inclusion is true:
\[
\MIN_{i}({f}) \subseteq \relbr(P_i),
\] where $P_i = \conv(\MIN_{1}({f}),\MIN_{2}({f}),\dots,\MIN_{i}({f}))$.

Since $P_i = \conv(\Half^{\leq}_{f}(z\I{i}))$, the requirement can be reformulated as follows: for all $z \in \dom(f)\setminus \MIN_{1}({f})$ the following inclusions hold
$$
\Half^{=}_{f}(z) \subseteq \relbr(\, \conv(\Half^{\leq}_{f}(z)) \,).
$$
\end{theorem}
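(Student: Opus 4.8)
Since the claim is an equivalence I would prove the two implications separately, after the routine observation that the two displayed formulations coincide: $P_i=\conv(\Half^\leq_f(z\I i))$, $\MIN_i(f)=\Half^=_f(z\I i)$, quantifying over $z\in\dom(f)\setminus\MIN_1(f)$ is the same as quantifying over $i\geq 2$ and $z\in\MIN_i(f)$, and $\relbr(P_i)=\relbr(\conv(\Half^\leq_f(z)))$ for every $z\in\MIN_i(f)$.

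\textbf{Necessity of the condition.} Here I would argue directly, by contradiction, that no $z\in\MIN_i(f)$ with $i\geq 2$ lies in $\relint(P_i)$. Suppose one does. Since $i\geq 2$ the set $\Half^<_f(z)$ is non-empty, so $\Half^\leq_f(z)$ has at least two points; by the standard description of the relative interior of the convex hull of a finite point set, $z$ is a strictly positive convex combination of \emph{all} the points of $\Half^\leq_f(z)$. Isolating the term with $z$ and renormalising gives $z=\sum_q \mu_q\,q$, the sum over $q\in\Half^\leq_f(z)\setminus\{z\}$, all $\mu_q>0$, $\sum_q\mu_q=1$; hence $0=\sum_q\mu_q\,(z-q)$ with every coefficient strictly positive. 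This forces the cone generated by the vectors $z-q$ to be the whole linear span $\linh(\{z-q\})$, so that, enumerating $\Half^\leq_f(z)\setminus\{z\}$ by non-decreasing $f$-value as $x\I1,\dots,x\I{k-1}$ and setting $x\I k=z$ (legitimate since $f(z)=\max_{\Half^\leq_f(z)}f$), the cone $\cone(x\I1,\dots,x\I{k-1}\mid x\I k)$ equals $\affh(\Half^\leq_f(z))$ and therefore contains every point of $\Half^\leq_f(z)$. Applying Definition \ref{DCNDef} to the chain $x\I1,\dots,x\I k$ and to any $y\in\Half^<_f(z)\subseteq\dom(f)$ (which lies in that cone) yields $f(y)\geq f(x\I k)=f(z)$, contradicting $f(y)<f(z)$. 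This used only $f\in\DCN{n}$; if in addition a conic extension $g$ is assumed, the sub-case $\MIN_i(f)=\{z\}$ has a one-line proof, since then $z\in\conv(\Half^<_f(z))$ and quasiconvexity of $g$ gives $g(z)\leq\max_{\Half^<_f(z)}f<f(z)=g(z)$.

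\textbf{Sufficiency of the condition.} Assume $\MIN_i(f)\subseteq\relbr(P_i)$ for all $i\geq 2$; I would construct a conic extension $g\colon\conv(\dom(f))\to\RR$ by prescribing its sublevel sets. First one records that $\dom(f)\cap P_i=\Half^\leq_f(z\I i)$ for every $i$: a point of $\dom(f)$ of larger $f$-value lying in $P_i$ would be a convex combination of points of strictly smaller $f$-value, which Definition \ref{DCNDef} forbids by the same degenerate-cone argument as above. The plan is then to choose, by induction on $i$, closed convex sets $Q_1\subseteq Q_2\subseteq\cdots$ with $\bigcup_i Q_i\supseteq\conv(\dom(f))$ such that for each $i$: $P_i\subseteq Q_i$; $Q_i\cap\dom(f)=\Half^\leq_f(z\I i)$ (achievable by inflating $P_i$ by a sufficiently small amount, since $\Half^\leq_f(z\I i)$ is finite and $P_i\cap\dom(f)=\Half^\leq_f(z\I i)$); $\MIN_i(f)$ lies on the relative boundary of $Q_i$ (this is exactly where $\MIN_i(f)\subseteq\relbr(P_i)$ is used — one inflates $P_i$ only away from the faces carrying points of $\MIN_i(f)$); and the nesting is tight enough that some function $g$, continuous on $\conv(\dom(f))$ and strictly increasing across the shells $Q_i\setminus Q_{i-1}$, satisfies $\{g\le f(z\I i)\}=Q_i$ for all $i$. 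Defining $g(x)=\inf\{f(z\I i):x\in Q_i\}$ and interpolating on the shells, the condition $Q_i\cap\dom(f)=\Half^\leq_f(z\I i)$ gives $g|_{\dom(f)}=f$, and conicity of $g$ follows from Theorem \ref{CNEquivalentDefinitions}(3): every sublevel set $\Half^\leq_g(x)$ equals $Q_i\cap\conv(\dom(f))$ for the appropriate $i$, hence is convex, while the level set $\Half^=_g(x)$ is forced onto the relative boundary of $Q_i$.

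\textbf{Where the difficulty lies.} The real content of the theorem is the sufficiency part, and within it the \emph{simultaneous} choice of the inflated bodies $Q_i$: one must inflate each $P_i$ enough to make the sublevel sets of $g$ behave well (the ``strictly increasing across shells'' requirement) yet little enough to avoid absorbing further points of $\dom(f)$ and to keep $\MIN_i(f)$ exposed, and all of this must be compatible across the infinitely many levels at once. Controlling the construction near the relative boundary of $\conv(\dom(f))$, and in the lower-dimensional situations where $\affh(P_i)$ grows with $i$, is the delicate step, and it is precisely where the hypothesis $\MIN_i(f)\subseteq\relbr(P_i)$ does its work; the necessity direction, by contrast, is essentially a formal consequence of Definition \ref{DCNDef}.
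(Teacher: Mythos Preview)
Your necessity argument is correct and in fact establishes more than is asked: you show that the condition $\MIN_i(f)\subseteq\relbr(P_i)$ holds for \emph{every} $f\in\DCN{n}$, whether or not an extension exists, so the ``only if'' direction becomes vacuous. The paper does not exploit this; its necessity proof genuinely uses an assumed conic extension $g$, deriving a contradiction by looking at the set $U=\cone(P_{i-1}\mid z\I{i})\cap P_i$, noting $g\equiv g(z\I{i})$ on $U$, and then observing that the ray $\cone(u\mid z\I{i})$ for $u\in U$ meets $P_{i-1}$, where $g$ is strictly smaller. Your route through Definition~\ref{DCNDef} alone is shorter and more informative.

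For sufficiency, however, your proposal is only a sketch, and the concrete construction is precisely what the paper supplies and what you are missing. You propose to inflate each $P_i$ to a body $Q_i$ with $\MIN_i(f)\subseteq\relbr(Q_i)$ and to read $g$ off the nesting; you rightly flag the simultaneous inflation as the delicate point, but you do not carry it out, and your conicity check (``every sublevel set $\Half^\leq_g(x)$ equals $Q_i\cap\conv(\dom(f))$ for the appropriate $i$'') is wrong as written: for $x$ with $g(x)$ strictly between $f(z\I{i-1})$ and $f(z\I{i})$ the sublevel set is an intermediate body, and you give no argument that it is convex or that $\Half^=_g(x)\subseteq\relbr(\Half^\leq_g(x))$. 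The paper bypasses inflation entirely. It sets $g\equiv f(z\I1)$ on $P_1$ and, on each shell $B_i=P_i\setminus P_{i-1}$, makes $g$ a function only of the Euclidean distance $d(x,P_{i-1})$, interpolating linearly between $f(z\I{i-1})$ and $f(z\I{i})$ as that distance runs from $0$ to its supremum $\tau$ on $B_i$, and fixing $g\equiv f(z\I{i})$ on $\relbr(B_i)$. Conicity of each $g|_{P_i}$ is then verified by a direct three-case analysis on whether $y,z$ lie in $P_{i-1}$ or in $B_i$, the key tool being convexity of the sets $\{x\in P_i:d(x,P_{i-1})\le t\}$. This explicit interpolant together with the case analysis is the step you need to add.
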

\begin{proof}
Let us show that if $\exists i \geq 2$, such that $\MIN_{i}({f}) \cap \relint(P_i) \not= \emptyset$, then the extension of $f$ does not exist. Suppose to the contrary that there is some extension $g \in \CN{n}$ of the function $f$. By Note \ref{CNSubsetQC}, the function $g$ is quasiconvex, and we have
\begin{equation}\label{ExtensionToCNTh_1}
\forall i \geq 1,\, \forall x \in P_i \quad g(x) \leq g(z\I{i}).
\end{equation}
Without loss of generality we can assume that $z\I{i} \in \MIN_{i}({f}) \cap \relint(P_i)$. Let $U = \cone(P_{i-1}|z\I{i}) \cap P_i$. Since $g \in \CN{n}$, then $g(x) \geq g(z\I{i})$ for all $x \in U$. By the inequalities \eqref{ExtensionToCNTh_1}, we have that $U \subseteq \Half^{=}_{g}(z\I{i})$. Additionally, $U \not= \emptyset$, because $z\I{i} \in \relint(P_i)$. Suppose that $u \in U$ and the ray $R = \cone(u|z\I{i})$ intersects the set $P_{i-1}$ in some point. The last contradicts to the fact that $$\forall x \in R \quad g(x) \geq g(z\I{i}).$$

Let us show that if the conditions of the theorem are true, then an extension of $f$ exists. The function $g$ is built by inductive propagation of its values to the sets $P_i$, for any $i \geq 1$. To this end, we introduce an additional notation $B_i = P_i \setminus P_{i-1}$.

Let $g(x) \equiv f(z\I1)$, for all $x \in P_1$. Assuming that $g$ has already been defined for the set $P_{i-1}$, we show how to extend $g(x)$ to the set $P_i$. Let $g(x) = f(z\I{i})$, for all $x \in \relbr(B_i)$. For any $t \geq 0$, we consider the sets
\begin{equation}\label{ExtensionToCNTh_2}
U(t) = \relint(B_i) \cap \{x : d(x,P_{i-1}) = t \},
\end{equation} where $d(x,P_{i-1})$ is the $l_2$-distance from the point $x$ to the convex set $P_{i-1}$. The sets $U(t)$ are subsets of the set $\relint(B_i)$ that have an equal distance to the boundary of $P_{i-1}$. Let
\begin{equation}\label{ExtensionToCNTh_3}
\tau = \sup\limits_{x \in B_{i}} d(x,P_{i-1}),
\end{equation} then
\begin{equation}\label{ExtensionToCNTh_4}
\relint(B_i) = \bigcup\limits_{0 < t < \tau} U(t).
\end{equation}

The following formula extends the function $g$ to the sets $U(t)$, for any $0 < t < \tau$:
\begin{equation}\label{ExtensionToCNTh_5}
g(x) \equiv \frac{t f(z\I{i}) + (\tau-t)f(z\I{i-1})}{\tau} \text{ for any}~x \in U(t).
\end{equation} Then, the formula \eqref{ExtensionToCNTh_4} gives the extension of $g$ to the set $P_i$.

We show by induction that $g_i = g|_{P_i}$ is contained in the class $\CN{n}$ for any $i \geq 1$. Trivially, $g_1 \in \CN{n}$, because $g_1 \equiv const$. Let $g_{i-1} \in \CN{n}$. We need to show that $g_i \in \CN{n}$. The claim is: $\forall y,z \in P_i$, $g(y) \leq g(z)$ and $y \not= z$ we have
$$\forall x \in \cone(y|z) \cap P_i \quad g(x) \geq g(z).$$
If $\cone(y|z) \cap P_i \subseteq P_{i-1}$, then the claim follows from the inductive assumption. In the opposite case, we have $\cone(y|z) \cap B_i \not= \emptyset$. There are the only three possible cases: 1)~$y,z \in P_{i-1}$; 2)~$y \in P_{i-1},\, z\in B_i$; 3)~$y,z \in B_{i}$.

\begin{description}
\item[Case 1: $y,z \in P_{i-1}.$] The following equality holds: $$\cone(y|z) \cap P_i = [z,v] \cup (v,u],$$ where $[z,v] \subseteq P_{i-1}$, $(v,u] \subseteq B_i$, $v \in \relbr(P_{i-1})$ and $u \in \relbr(B_i)$. By the inductive assumption, the claim is true for $x \in [z,v]$. By the definition, values of $g$ in the segment $(v,u]$ are strongly greater than in the segment $[z,v]$. So, we need to show that values of $g$ are not increasing along the segment $(v,u]$. The distance $d(x,\relbr(P_{i-1}))$ is not decreasing along the segment $(v,u)$. By formulae \eqref{ExtensionToCNTh_2}~-~\eqref{ExtensionToCNTh_5} $g(x)$ is not decreasing too. The value $g(u)$ is maximal, because $u \in \relbr(B_i)$.

\item[Case 2: $y \in P_{i-1},\, z\in B_i.$]  The segment $[y,z]$ must intersect the segment $\relbr(P_{i-1})$ in some point $v$. By the same reasons, values of $g(x)$ are not decreasing along the ray $\cone(v|z)$.

\item[Case 3: $y,z \in B_{i}.$] The case $z \in \relbr(B_i)$ is trivial, because then the intersection of $B_i$ and $\cone(y|z)$ consists of only one point $z$. Let $y \in \delta(B_i)$. By construction, the inequality $g(y) \leq g(z)$ is only possible in the case, when $z \in \relbr(B_i)$. Let $y,z \in \relbr(B_i)$. Let us consider the set $$U_ z = \{x \in P_i  : d(x,P_{i-1}) \leq d(z,P_{i-1}) \}.$$ By definition, $$\relbr(U_z) = \{x \in P_i  : d(x,P_{i-1}) = d(z,P_{i-1}) \}.$$ Let $d(v,P_{i-1}) < d(z,P_{i-1})$ for some point $v \in \cone(y|z) \cap P_i$, then $v \in \relint(U_z)$. Hence, there is a neighborhood $B =v + r \cdot B_{2}^n$ of the point $v$, such that $B \cap U_z = B$. Since the set $U_z$ is convex, then $\conv(y,B) \subseteq U_z$. But the point $z$ is in $\conv(y,B)$ with some neighborhood. It contradicts to the statement $z \in \relbr(U_z)$. Thus, $d(x,P_{i-1}) \geq d(z,P_{i-1})$ for $\forall x \in \cone(y|z) \cap P_i$, which meets the corresponding inequalities for the function $g$.
\end{description}
\end{proof}

\begin{corollary}\label{ExtensionToCNCorr}
For any function $f \in \DCN{n}$, there is a function $g \in \CN{n}$, such that
\[
\dom(g) = \conv(\dom(f)),
\]
\[
\MIN_{1}({g}) = \MIN_{1}({f}),\text{ and}
\]
\[
\emptyset \not= \MIN_{2}({g}) \subseteq \MIN_{2}({f}).
\]
\end{corollary}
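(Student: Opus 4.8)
The plan is not to attempt an extension of $f$ in the sense of Definition~\ref{ExtensionToCNDef} — which by Theorem~\ref{ExtensionToCNTh} need not exist — but to re-run, verbatim, the layer-by-layer construction from the proof of the ``if'' part of Theorem~\ref{ExtensionToCNTh}, and to note that for \emph{any} $f\in\DCN{n}$ it outputs a function $g\in\CN{n}$ with $\dom(g)=\conv(\dom(f))$: the extra hypothesis of that theorem is used only to force $g$ to coincide with $f$ on all of $\dom(f)$, whereas the steps establishing $g\in\CN{n}$ (the inductive Cases~1--3) use only the geometry of the polytopes $P_i$, $B_i$ and of the distance function. I use the standing assumption that every set $\{x:f(x)\le\alpha\}$ is finite, so that the partition $\dom(f)=\bigcup_{i\ge1}\MIN_{i}({f})$ and the compact polytopes $P_i=\conv(\MIN_{1}({f}),\dots,\MIN_{i}({f}))$ are well defined; and I reduce to the case of a non-constant $f$, since otherwise $\MIN_{2}({f})=\emptyset$ and there is nothing to prove.

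The key preliminary fact is a ``discrete quasiconvexity'' statement: for every $i\ge1$,
\[
\dom(f)\cap P_i=\Half^{\le}_f(z\I{i})=\MIN_{1}({f})\cup\dots\cup\MIN_{i}({f}).
\]
Here $\supseteq$ is trivial. For $\subseteq$, let $z\in\dom(f)\cap P_i$; if $z$ is one of the points it is convex-combined from we are done, so assume $z=\sum_{j=1}^{k}t_j a_j$ with $t_j>0$, $\sum_j t_j=1$, $a_j\in\MIN_{1}({f})\cup\dots\cup\MIN_{i}({f})$ and $z\ne a_1$. The identity $\sum_{j\ge2}\tfrac{t_j}{t_1}(z-a_j)=a_1-z$ shows $a_1\in\cone(a_2,\dots,a_k\mid z)$. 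If $f(z)>f(z\I{i})$, then ordering $a_2,\dots,a_k$ by non-decreasing value and appending $z$ meets the hypotheses of Definition~\ref{DCNDef}, so $f(a_1)\ge f(z)>f(z\I{i})$, contradicting $a_1\in\Half^{\le}_f(z\I{i})$. Hence $f(z)\le f(z\I{i})$. In particular $\dom(f)\cap P_1=\MIN_{1}({f})$ and $\dom(f)\cap(P_2\setminus P_1)=\MIN_{2}({f})$.

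Now I would feed $f$ to the construction of Theorem~\ref{ExtensionToCNTh}, getting $g\in\CN{n}$ with $\dom(g)=\conv(\dom(f))$, and extract (writing $c_i=f(z\I{i})$): (a) $g\equiv c_1$ on $P_1$, so $g=f$ on $\MIN_{1}({f})\subseteq P_1$ and $c_1=\min_{\dom(g)}g$; (b) $g>c_1$ on $\conv(\dom(f))\setminus P_1$; (c) $c_1<g\le c_2$ on $P_2\setminus P_1$ and $g>c_2$ on $\conv(\dom(f))\setminus P_2$. Facts (b), (c) follow from formula~\eqref{ExtensionToCNTh_5} once one notes that $d(\cdot,P_{i-1})>0$ off $P_{i-1}$ and that $c_1<c_2<c_3<\dots$. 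Combined with the lemma: on $\dom(f)$, $g$ equals its minimum $c_1$ exactly on $\dom(f)\cap P_1=\MIN_{1}({f})$, so $\MIN_{1}({g})=\MIN_{1}({f})$; and among the remaining points, those in $\MIN_{2}({f})=\dom(f)\cap(P_2\setminus P_1)$ have $g\le c_2$ while all others have $g>c_2$, so (since $\MIN_{2}({f})$ is finite and, by non-constancy, non-empty) the second-smallest value of $g$ on $\dom(f)$ is attained, and only inside $\MIN_{2}({f})$; thus $\emptyset\ne\MIN_{2}({g})\subseteq\MIN_{2}({f})$. Here $\MIN_{i}({g})$ must be read as the $i$-th minimum set of $g$ restricted to the discrete set $\dom(f)$, i.e.\ $\RMIN{i}{\dom(f)}{g}$ (this is the only reading for which $\MIN_{1}({g})=\MIN_{1}({f})$ can hold when $|\MIN_{1}({f})|\ge2$, since the actual minimum set of a conic function is convex and a discrete set of $\ge2$ points is not).

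I expect the heart of the argument to be the discrete quasiconvexity lemma above, which is the one genuinely new input. The naive candidate $g(x)=d(x,P_1)$ — distance to a convex set, hence convex, hence conic by Theorem~\ref{CNInclusions} — already gives $\MIN_{1}({g})=\MIN_{1}({f})$ via the lemma, but it can violate $\MIN_{2}({g})\subseteq\MIN_{2}({f})$: a point of $\dom(f)$ closest to $P_1$ need not be a second minimizer of $f$. One is therefore forced to use a construction that already ``knows'' $\MIN_{2}({f})$ through the polytope $P_2$, and the layered construction of Theorem~\ref{ExtensionToCNTh} does exactly this; the rest is the routine bookkeeping with the shells $P_i\setminus P_{i-1}$ recorded as facts (b) and (c).
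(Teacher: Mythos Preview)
Your approach is correct and aligns with what the paper intends: the corollary is stated without proof as a consequence of the construction in Theorem~\ref{ExtensionToCNTh}, and you re-use precisely that layered construction, correctly observing that the inductive proof of $g_i\in\CN{n}$ (Cases~1--3) never invokes the hypothesis $\MIN_i(f)\subseteq\relbr(P_i)$.

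Two points worth flagging. First, your discrete quasiconvexity lemma $\dom(f)\cap P_i=\Half^{\le}_f(z\I{i})$ is a genuine ingredient the paper leaves implicit; without it one cannot rule out stray points of $\dom(f)$ landing in $P_1$ or in $P_2\setminus P_1$, and your cone argument via $a_1-z=\sum_{j\ge2}\tfrac{t_j}{t_1}(z-a_j)$ is exactly the right way to extract it from Definition~\ref{DCNDef}. Second, your reading of $\MIN_i(g)$ as $\RMIN{i}{\dom(f)}{g}$ is the intended one --- this is confirmed by the way the corollary is invoked in the proof of Theorem~\ref{NonSingularFunctionCriteria}, where the conclusion is phrased as $\RMIN{1}{D}{\hat g}=\MIN_1(g)$ --- and your remark that the literal reading fails whenever $|\MIN_1(f)|\ge2$ (since then $\MIN_1(g)=P_1$ is a nondegenerate convex set) is a correct diagnosis of an imprecision in the statement.
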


\begin{note}\label{ExtensionToCNNote}
It is not hard to see that if it is possible to extend a function $f \in \DCN{n}$ to a function $g \in \CN{n}$, then the function $f$ can be extended to any convex set $M$, such that $$\conv(\dom(f)) \subseteq M.$$ To see this, we can use the scaled distance to the convex set $\conv(\dom(f))$.
\end{note}

\section{General notes about the conic function minimization problem}

Let $D$ be a discrete set and $\mathscr{F}$ be some class of functions.
We define the notion of the \emph{generalized discrete minimization functional} $\GenFun{\mathscr{F}}{D} : \mathscr{F} \to \RR$ as follows.

\begin{definition}
Let $f \in \mathscr{F}$ and $D \subseteq \dom(f)$. The functional $\GenFun{\mathscr{F}}{D}$ is determined by the following equality:
\begin{equation}\label{GeneralMinFunctionalDef}
\GenFun{\mathscr{F}}{D}(f) = \min\limits_{x \in D} f(x).
\end{equation}
If $F = \GenFun{\mathscr{F}}{D}$, then the set $\mathscr{F}$ will also be denoted by the symbol $\dom(F)$.
\end{definition}

\begin{note}
The functional \eqref{GeneralMinFunctionalDef} defines some minimization problem. By this reason, we will simply call functionals of the type \eqref{GeneralMinFunctionalDef} as minimization problems.
\end{note}

To define functions, we will use the \emph{comparison oracle}. For any pair of points $x,y \in \dom(f)$, the oracle checks whether the inequality $f(x) \leq f(y)$ holds or not.

Let functions $f \in \CN{n}$ and $g_i \in \CN{n}$, for any $i \in 1:m$, be defined by their comparison oracles, and $D$ be some discrete set. We consider the following constraint minimization problem:
\begin{align}
&f(x) \to \min\label{ConditionalMinProblem}\\
&\begin{cases}
g_i(x) \leq 0,\text{ for }i \in 1:m \\
x \in D.
\end{cases}\notag
\end{align}

Let us show that the problem \eqref{ConditionalMinProblem} can be reduced to an unconditional minimization problem in the class $\CN{n}$.
We consider the functions $$t(x) = \max\limits_{i \in 1:m}\{g_i(x)\}$$ and $h(x) = (\, (t(x))_+,f(x) \,)$, where \[(x)_+ = \begin{cases}
x,\text{ for } x \geq 0,\\
0,\text{ for } x < 0.
\end{cases}\]
It is easy to see that the optimal points set of the problem \eqref{ConditionalMinProblem} coincides with the lexicographical minima set of the problem $\GenFun{\CN{n}}{D}(h)$. By properties of functions from the class $\CN{n}$, we have $g \in \CN{n}$. Having comparison oracles of the functions $g_i$ and $f$, we can easily construct a lexicographical comparison oracle for the function $h$. In an alternative variant of the reduction, we can choose the function $h$ in the following way: $h(x) = \max\{f(x), M \cdot (t(x))_+\}$, where $M > 0$ is a sufficiently large constant. Usually, it is easy to choose a value for the constant $M$. For example, if all the functions $g_i$ have integral values, then we can put $M = f(x_0)$ for some point $x_0 \in D$.

\begin{definition}
\emph{An algorithm to solve the minimization problem $F = \GenFun{\mathscr{F}}{D}$} is an algorithm, whose atomic operation is a call to the comparison oracle. The input of such algorithm is the comparison oracle for some function from the $\mathscr{F}$ class. The output of the algorithm is some point from the set $\RMIN{1}{D}{f}$.
\end{definition}

\begin{definition} Let $f \in \dom(F)$. The symbol $\tau_F(A,f)$ denotes the number of oracle calls that an algorithm $A$ takes to solve the problem $F(f)$. Let $$\tau_F(A) = \sup\limits_{f \in \dom(F)} \tau_F(A,f),\text{ and }$$ $$\tau_F = \inf\limits_{A \in \mathscr{A}} \tau_F(A),$$ where $\mathscr{A}$ is the set of all algorithms that solve the problem $F$. The symbol $\tau_F$ denotes \emph{the complexity of the problem $F$}.
\end{definition}

\begin{definition} Any algorithm $A$ for the problem $F$ can be represented by a \emph{binary solution tree}, which is said to be \emph{algorithm's protocol or its program}. Internal nodes of the protocol correspond to oracle calls. Each internal node has exactly two children, the first corresponds to the answer ``yes'' and the second to the answer ``no''. Each path from the root to a leaf corresponds to some concrete way of computations, where an input is a comparison oracle for some $f \in \dom(F)$. Finally, leaves are marked by optimal solutions of the corresponding problem. It is not hard to see that the value $\tau_F(A)$ coincides with maximal length of paths from the root to leaves of the protocol $A$.
\end{definition}

Following \cite{ZOLCHIR}, let us define the notion of a resolving set. It is said that functions $f,g$ have \emph{an equivalent order on points of a set $R$} if $\forall x,y \in R$ the inequality $f(x) \leq f(y)$ holds if and only if the inequality $g(x) \leq g(y)$ holds.
\begin{definition}
Let $F = \GenFun{\mathscr{F}}{D}$ and $f \in \mathscr{F}$. A set $R_f \subseteq \dom(f)$ is a \emph{resolving set} for the function $f$ with respect to the functional $F$ if for any function $g \in \mathscr{F}$, such that $R_f \subseteq \dom(g)$, the following statement holds:
$$
g\text{ and }f\text{ have an equivalent order on points of}~R_f \; \Longrightarrow \; \RMIN{1}{D}{f} \cap \RMIN{1}{D}{g} \not= \emptyset.
$$
\end{definition}

The next lemma shows the importance of resolving sets, a proof easily follows from the definition.

\begin{lemma}\label{ResolvingSetProperty}
Let $F = \GenFun{\mathscr{F}}{D}$, $A$ be a minimization algorithm of the problem $F$ and $f \in \dom(F)$. Let $p$ be the path from the root to a leaf in the protocol $A$ that corresponds to the function $f$. Let $V(p) \subseteq \dom(f)$ be the set of points, in which the oracle calls were asked along the path $p$. Then the set $V(p)$ is resolving for the function $f$.
\end{lemma}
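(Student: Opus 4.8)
The plan is to establish the resolving-set property by a direct simulation argument: feed the algorithm $A$ the comparison oracle of a competing function $g$ instead of that of $f$, and show that $A$ is forced to traverse exactly the path $p$, and hence to return the same answer.

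First I would fix an arbitrary $g \in \mathscr{F}$ with $V(p) \subseteq \dom(g)$ such that $g$ and $f$ have an equivalent order on $V(p)$, and write $p = (v_0, v_1, \dots, v_\ell)$, where $v_0$ is the root, $v_\ell$ the leaf, and each internal node $v_j$ carries an oracle query of the form ``$f(x_j) \le f(y_j)$\,?'' with $x_j, y_j \in V(p)$. I would then argue by induction on $j$ that, when $A$ is executed with the comparison oracle of $g$, after $j$ oracle calls the computation is at node $v_j$. The base case $j=0$ is immediate. For the inductive step, at node $v_j$ the algorithm asks about the pair $(x_j, y_j)$; since $x_j, y_j \in V(p) \subseteq \dom(g)$, this query is legitimate for $g$, and since $g$ and $f$ have an equivalent order on $V(p)$, the answer returned by the oracle of $g$ coincides with the answer returned by the oracle of $f$. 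Therefore the computation passes to the same child $v_{j+1}$ that the path $p$ selects. Consequently $A$, run on $g$, traverses precisely the path $p$ and halts at the leaf $v_\ell$.

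Finally, because $A$ is a correct algorithm for $F = \GenFun{\mathscr{F}}{D}$, the point $x^\ast$ labelling the leaf $v_\ell$ is an optimal solution for every valid input whose computation reaches $v_\ell$. By hypothesis $p$ corresponds to $f$, so $x^\ast \in \RMIN{1}{D}{f}$; and we have just shown that $p$ is also the path traversed on input $g$, so $x^\ast \in \RMIN{1}{D}{g}$. Hence $x^\ast \in \RMIN{1}{D}{f} \cap \RMIN{1}{D}{g} \ne \emptyset$, which is exactly the assertion that $V(p)$ is a resolving set for $f$ with respect to $F$.

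The argument is mostly bookkeeping, and I expect the only delicate point to be the induction invariant itself: one must be sure that every oracle query issued while $A$ processes $g$ is issued at points of $V(p)$, so that both the equivalent-order hypothesis applies and the query lies inside $\dom(g)$ — and this is guaranteed precisely because, by the inductive hypothesis, the computation has not yet deviated from $p$. The remaining subtlety is merely unwinding the definition of ``an algorithm that solves $F$'' so that ``the leaf reached by a valid input is labelled by an optimum for that input'' is read off correctly; once that is done, there is no genuine obstacle.
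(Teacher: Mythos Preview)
Your argument is correct and is precisely the direct verification the paper has in mind: the paper itself omits the proof, stating only that it ``easily follows from the definition,'' and your simulation-plus-induction is the natural way to unpack that. There is nothing to add.
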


\begin{definition}\label{NonSingularFunctionDef} The function $f \in \mathscr{F}$ is \emph{non-singular with respect to the problem} $F = \GenFun{\mathscr{F}}{D}$ if for any resolving set $R_f$ for $f$ $$R_f \cap \RMIN{1}{D}{f} \not= \emptyset.$$
\end{definition}

The following theorem gives a non-singularity criteria for the classes $\CN{n}$ and $\DCN{n}$.
\begin{theorem}\label{NonSingularFunctionCriteria}
Let $D \subseteq \RR^n$ be some bounded discrete set and the minimization problem be defined by the functional $F = \GenFun{\CN{n}}{D}$ or by the functional $F = \GenFun{\DCN{n}}{D}$. Let, additionally, $Y = \RMIN{1}{D}{f}$ and $Z = \RMIN{2}{D}{f}$. Then a function $f  \in \dom(F)$ is non-singular if and only if
for any subset $T \subseteq Y \cup Z$, such that the points of $T$ are in general position, and $\forall y \in Y$ we have
\begin{equation}\label{NonSingularFunctionCriteriaCondition}
\cone(T|y) \cap Z = \emptyset.
\end{equation}
\end{theorem}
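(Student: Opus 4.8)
The plan is to prove both directions by translating the non-singularity requirement into a statement about resolving sets, using Lemma~\ref{ResolvingSetProperty} together with the combinatorial characterization of conic (resp.\ discretely conic) functions from Theorem~\ref{CNEquivalentDefinitions} and Definition~\ref{DCNDef}. The key observation is that membership $y \in \RMIN{1}{D}{g}$ for a second function $g$ can be forced, or forbidden, purely by prescribing the order $g$ induces on a small sample of points: if $g$ agrees with $f$ on a resolving set $R_f$ but the ordering data on $R_f$ does not already "pin down" a minimizer, then one can build a conic $g$ whose minimum sits at a point of $Z=\RMIN{2}{D}{f}$, destroying non-singularity.

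For the "only if" direction I would argue by contraposition. Suppose there is a subset $T\subseteq Y\cup Z$ in general position and a point $y\in Y$ with $\cone(T\mid y)\cap Z\ne\emptyset$; pick $z\in Z$ in that intersection. The idea is to exhibit a resolving set $R_f$ for $f$ that misses $\RMIN{1}{D}{f}=Y$, contradicting non-singularity. Concretely, take $R_f$ to consist of $T\cup\{z\}$ together with enough points of $D$ to certify the order of $f$ on $D$ away from $Y$; the point of the general-position hypothesis is that the cone relation $z\in\cone(T\mid y)$ lets us build a \emph{second} conic function $g$ that orders $R_f$ exactly as $f$ does, yet has $z$ (not any point of $Y$) as its unique minimizer on $D$ — for instance by taking $g$ to be (an affine image of) a conic "tent" function whose sublevel set through the second level is forced by $z\in\cone(T\mid y)$ to have $z$ on its relative boundary in a way incompatible with $z$ being only a second minimum, pushing $z$ down to first place. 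Since $f$ and $g$ order $R_f$ equivalently but $\RMIN{1}{D}{f}\cap\RMIN{1}{D}{g}=\emptyset$, the set $R_f$ is \emph{not} resolving only if it fails the defining implication — so in fact one shows any resolving set for $f$ must contain a point of $Y$, i.e.\ one shows the \emph{existence} of a resolving set avoiding $Y$, which contradicts non-singularity. I would organize this as: (i) the ordering of $f$ on $D\setminus(Y\cup Z)$ plus on $T\cup\{z\}$ is resolving (every candidate $g$ with this order has its min on $D$ inside $Y\cup Z$, by the conic-function cone inequality applied to $T$); (ii) among such $g$, the cone condition $\cone(T\mid y)\cap Z\ne\emptyset$ permits one with min at $z\in Z$; hence the resolving set $T\cup\{z\}\cup(D\setminus(Y\cup Z))$ misses $Y$.

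For the "if" direction, assume \eqref{NonSingularFunctionCriteriaCondition} holds for every such $T$ and $y$, and let $R_f$ be an arbitrary resolving set for $f$; I must show $R_f\cap Y\ne\emptyset$. Suppose not. Consider the general-position subset $T$ of $(Y\cup Z)\cap R_f$ (replacing points by a triangulation as in the proof of Theorem~\ref{CNEquivalentDefinitions} if needed). Since $R_f$ is resolving, for the function $f$ itself (which trivially has the equivalent order on $R_f$) we get a common minimizer, so that is no contradiction; instead I would construct a $g\in\dom(F)$ agreeing with $f$ on $R_f$ whose minimizer on $D$ is forced \emph{outside} $Y$, using that $R_f$ misses $Y$ and that the cone condition guarantees the points of $Z$ are "reachable" as minima without violating conicity on $R_f$ — then $R_f$ resolving forces $\RMIN{1}{D}{f}\cap\RMIN{1}{D}{g}\ne\emptyset$, i.e.\ some point of $Y$ lies in $\RMIN{1}{D}{g}$, and one checks this is incompatible with $g$'s construction unless $R_f$ actually did meet $Y$. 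The two directions are thus mirror images: condition \eqref{NonSingularFunctionCriteriaCondition} is exactly what prevents the "decoy minimum at a second-best point" construction from succeeding while staying consistent with the observed order on a $Y$-avoiding set.

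The main obstacle I anticipate is the explicit construction, in the "only if" direction, of the competitor function $g\in\CN{n}$ (resp.\ $\DCN{n}$) that simultaneously (a) reproduces $f$'s order on the chosen resolving set, and (b) has its true minimum at the designated point $z\in Z$. Getting (b) without breaking (a) is where the hypothesis $\cone(T\mid y)\cap Z\ne\emptyset$ and the general-position reduction must be used in full force: one needs $z$ to be "inside" the cone generated by the sampled sublevel points so that declaring $f(z)$ small is consistent with conicity, while no sampled point detects that $z$ has been promoted past the old first level. For the discretely conic case there is the extra subtlety, flagged in Note~\ref{ExtensionToCNNote} and Theorem~\ref{ExtensionToCNTh}, that $g$ need not extend to a conic function on $\conv(\dom f)$ — but since Definition~\ref{DCNDef} only constrains $g$ on the discrete set, the same cone inequality suffices and the construction is in fact easier there.
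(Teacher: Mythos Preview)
Your ``if'' direction (cone condition $\Rightarrow$ non-singular) is on the right track but vague: the paper's competitor is simply $g(x)=f(x)$ for $x\notin Z$ and $g(x)=\delta<f(Y)$ for $x\in Z$; one then checks directly from Definition~\ref{DCNDef} that $g\in\DCN{n}$, and the only nontrivial case---a cone with apex in $Y$ and generators in $Y\cup Z$---is exactly where hypothesis~\eqref{NonSingularFunctionCriteriaCondition} is invoked to guarantee the cone misses $Z$. There is no need for an abstract ``tent'' construction.

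Your ``only if'' direction, however, is logically inverted. To prove $f$ is singular you must exhibit a set $R$ with $R\cap Y=\emptyset$ and show that $R$ \emph{is} resolving, i.e.\ that \emph{every} $g$ with the same order on $R$ satisfies $\RMIN{1}{D}{g}\cap Y\ne\emptyset$. Instead you construct a single $g$ with minimum at $z\in Z$; such a $g$ would witness that your set is \emph{not} resolving, the opposite of what is needed. Your steps (i) and (ii) literally contradict each other: (i) asserts the set is resolving, (ii) produces a $g$ refuting that. (Also, ``min inside $Y\cup Z$'' in (i) is not the resolving condition; it must be $Y$.)

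The paper's argument for this direction goes the other way round. Take $R=D\setminus Y$ and let $g$ be \emph{any} function in the class with the same order as $f$ on $R$. Since the cone condition fails, there exist $y\in Y$, $T\subseteq Y\cup Z$, and $z\in Z\cap\cone(T\mid y)$. If $g(y)>g(x)$ for every $x\in Z$, then all generators in $T$ have $g$-value below $g(y)$, so conicity of $g$ forces $g(z)\ge g(y)>g(z)$, a contradiction. Hence $g(y)\le g(x)$ for some $x\in Z$; combined with the preserved order on $D\setminus Y$ (where $Z$ sits at the bottom) this gives $g(y)\le g(x)$ for all $x\in D\setminus Y$, so the $g$-minimum on $D$ lies in $Y$. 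Thus $R$ is resolving and misses $Y$, proving singularity.
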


\begin{proof} {\it Sufficiency.} Firstly, let us consider the functional $F = \GenFun{\DCN{n}}{D}$. Let $y \in Y$ and $z \in Z$. Let us define the function $g : D \to \RR$ as follows:
\[
g(x) = \begin{cases}
f(x),\text{ for }x\notin Z\\
\delta,\text{ for }x \in Z,
\end{cases}
\] where $\delta < f(y)$.

Let $R_f$ be a resolving set for $f$ with respect to the problem $F$. Suppose to the contrary that $Y \cap R_f \not= \emptyset$. Clearly, $f$ and $g$ have an equivalent order on points of the set $R_f$, and
$$\RMIN{1}{D}{f} \cap \RMIN{1}{D}{g} = \emptyset.$$ To obtain a contradiction, we need to show that $g \in \DCN{n}$. In other words, all the conditions from Definition \ref{DCNDef} are satisfied. Let $C = \cone(T|p)$ for $T \subseteq \dom(g)$ and $p \in \dom(g)$. We consider different cases to choose the apex $p$ of the cone $C$:
\begin{description}
\item[Case 1: $p \in \dom(g) \setminus (Y \cup Z).$] The conditions from Definition \ref{DCNDef} for $f$ are satisfied on $C$, since $f \in \DCN{n}$. In this case, $C$ is the point set of the third and all the next minima. The values of $C$ have not been changed for $g$. Therefore, the conditions for $g$ are satisfied on $C$.
\item[Case 2: $p \in Z.$] The conditions from Definition \ref{DCNDef} for $f$ are satisfied on $C$, since $f \in \DCN{n}$. The values of $g$ have been changed only in the point $z$, so the conditions for $g$ are satisfied on $C$.
\item[Case 3: $p \in Y.$] In this case, the cone $C$ is based on points of the set $Y \cup Z$ with the apex $p \in Y$. The conditions from Definition \ref{DCNDef} for $g$ can be unsatisfied on $C$ only on points with values less than $g(y)$. The last observation is true only for points of the set $Z$, but the theorem's condition \eqref{NonSingularFunctionCriteriaCondition} states that $C \cap Z = \emptyset$.
\end{description}

By Corollary \ref{ExtensionToCNCorr} of Theorem \ref{ExtensionToCNTh}, we can expand the function $g$ to the function $\hat g \in \CN{n}$, such that
\[
\dom(\hat g) = \conv(\dom(g)),
\]
\[
\RMIN{1}{D}{\hat g} = \MIN_{1}({g}) = Z.
\] The last fact gives the sufficiency condition for the functional $F = \GenFun{\CN{n}}{D}$.

{\it Necessity.} Suppose that the theorem's condition \eqref{NonSingularFunctionCriteriaCondition} is not satisfied. The claim is to construct a resolving set $R_f$ for $f$ with the property $R_f \cap Y = \emptyset$. Let $R_f = D \setminus Y$. We consider the function $g \in \dom(F)$, such that $f$ and $g$ have an equivalent order on points of the set $R_f$. Since the opposite condition of \eqref{NonSingularFunctionCriteriaCondition} holds, then there is a cone $C$, composed from points of the set $Y \cup Z$ with an apex from $Y$, such that $z \in C$ for some $z \in Z$. Suppose that $g(y) > g(x)$ for any $x \in Z$. Then, by Definitions \ref{CNDef} and \ref{DCNDef} of the classes $\CN{n}$ and $\DCN{n}$, we have $g(z) \geq g(y) > g(x)$ for any $x \in Z$. The last observation is a contradiction, because $z \in Z$. Hence, we have $g(y) \leq g(x)$, for some $x \in Z$. The last fact means that
$$\RMIN{1}{D}{f} = Y \cap \RMIN{1}{D}{g} \not= \emptyset.$$ Therefore, $R_f$ is a resolving set for $f$ with the property $R_f \cap Y = \emptyset$. Hence, the function $f$ is singular.
\end{proof}

The next corollary gives a simplified condition of the non-singularity.
\begin{corollary}\label{NonSingularFunctionCriteriaCorr}
Let $F = \GenFun{\CN{n}}{D}$ or $F = \GenFun{\DCN{n}}{D}$, and $f \in \dom(F)$. If $|\RMIN{1}{D}{f}| = |\RMIN{2}{D}{f}| = 1,$ or, in other words, the function $f$ has unique points of the first and second minima on $D$, then $f$ is non-singular with respect to the functional $F$.
\end{corollary}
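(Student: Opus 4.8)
The plan is to deduce the statement directly from Theorem~\ref{NonSingularFunctionCriteria}. Write $Y = \RMIN{1}{D}{f} = \{y\}$ and $Z = \RMIN{2}{D}{f} = \{z\}$; by the very definition of the second minima set, $y \neq z$. Since $Y \cup Z$ then contains at most two points, every subset $T \subseteq Y \cup Z$ is automatically in general position, so the hypothesis of Theorem~\ref{NonSingularFunctionCriteria} reduces to checking that $\cone(T \mid y) \cap Z = \emptyset$ for the unique apex $y \in Y$ and for each of the (at most four) base sets $T \in \{\emptyset,\{y\},\{z\},\{y,z\}\}$.

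First I would unwind the notation of \eqref{ConeSymbol}: $\cone(T \mid y) = y + \cone(\{\, y - t : t \in T \,\})$. For $T = \emptyset$ or $T = \{y\}$ this is just the singleton $\{y\}$ (the cone generated by the empty set, respectively by the zero vector, is $\{0\}$), which does not meet $Z$ because $y \neq z$. For $T = \{z\}$ or $T = \{y,z\}$ it is the ray $\{\, y + s(y - z) : s \geq 0 \,\}$ emanating from $y$ in the direction pointing \emph{away} from $z$; the equation $y + s(y-z) = z$ forces $(1+s)(y - z) = 0$, i.e.\ $y = z$, a contradiction, so again $z \notin \cone(T \mid y)$.

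Hence $\cone(T \mid y) \cap Z = \emptyset$ in every case, condition~\eqref{NonSingularFunctionCriteriaCondition} of Theorem~\ref{NonSingularFunctionCriteria} holds, and the theorem yields that $f$ is non-singular with respect to $F$, both for $F = \GenFun{\CN{n}}{D}$ and for $F = \GenFun{\DCN{n}}{D}$. There is essentially no obstacle here: the content is simply the observation that a cone with apex $y$ generated from the single other relevant point $z$ opens away from $z$ and so can never contain $z$; the only mild care needed is to treat the degenerate base sets $T$ (empty, or containing the apex itself) so that the appeal to Theorem~\ref{NonSingularFunctionCriteria} covers literally all admissible $T$.
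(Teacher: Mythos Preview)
Your proof is correct and is exactly the intended argument: the paper states the result as an immediate corollary of Theorem~\ref{NonSingularFunctionCriteria} without further comment, and you have simply spelled out the trivial verification of condition~\eqref{NonSingularFunctionCriteriaCondition} when $|Y|=|Z|=1$.
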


The following two lemmas are key lemmas to prove lower complexity bounds, which will be presented in this work.
\begin{lemma}\label{LowerBoundLm}
Let $F = \GenFun{\mathscr{F}}{D}$. Let $\mathscr{T} = \{T_i\}$ and $\mathscr{G} = \{f_i\}$ be finite sequences of sets and functions, such that $T_i \subseteq \dom(f_i)$ and $f_i \in \dom(F)$ for any $i \in 1:|\mathscr{T}|$.
Let, additionally, the following minimality condition holds for any set $T_i$:
$$
R\text{ is a resolving set for }f_i \quad\Longrightarrow\quad T_i \subseteq R.
$$

Then, $\tau_F \geq \log_2 |\mathscr{T}|$.
\end{lemma}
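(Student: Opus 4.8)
The plan is a protocol-tree counting argument. Fix an arbitrary algorithm $A$ solving $F=\GenFun{\mathscr{F}}{D}$ and let $\mathcal{P}$ be its protocol; the goal is to bound $\tau_F(A)$ from below and then pass to the infimum over $A$. For each $i\in 1:|\mathscr{T}|$, running $A$ on the comparison oracle of $f_i$ traces a unique root-to-leaf path $p_i$ in $\mathcal{P}$; let $V(p_i)$ be the set of points queried along $p_i$ and let $\ell_i$ be the leaf of $p_i$. By Lemma \ref{ResolvingSetProperty} the set $V(p_i)$ is resolving for $f_i$, so the minimality hypothesis on $T_i$ forces $T_i\subseteq V(p_i)$; informally, on input $f_i$ the algorithm is obliged to query every point of $T_i$.

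The next step is to show that the assignment $i\mapsto \ell_i$ is injective (here one uses, as the notation $\mathscr{T}=\{T_i\}$ suggests, that the sets $T_i$ are pairwise distinct). Granting injectivity, $\mathcal{P}$ has at least $|\mathscr{T}|$ leaves; since $\mathcal{P}$ is a binary tree whose longest root-to-leaf path has length $\tau_F(A)$, it has at most $2^{\tau_F(A)}$ leaves, so $2^{\tau_F(A)}\geq |\mathscr{T}|$, i.e. $\tau_F(A)\geq \log_2|\mathscr{T}|$. Taking the infimum over all algorithms $A$ for $F$ then yields $\tau_F\geq\log_2|\mathscr{T}|$, which is the claim.

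The heart of the argument, and the step I expect to be the main obstacle, is the injectivity of $i\mapsto\ell_i$. Suppose $i\neq j$ but $\ell_i=\ell_j$. Then $f_i$ and $f_j$ follow the same path $p:=p_i=p_j$, hence return identical answers to every comparison posed along $p$, so $V(p):=V(p_i)=V(p_j)$ is, by Lemma \ref{ResolvingSetProperty}, resolving for both $f_i$ and $f_j$, whence $T_i\subseteq V(p)$ and $T_j\subseteq V(p)$ by minimality. One then has to upgrade this to the contradiction $T_i=T_j$. This is the delicate point: the plain inclusions $T_i,T_j\subseteq V(p)$ are not by themselves sufficient, so the argument must use the minimality condition more forcefully together with the fact that $A$ behaves identically on $f_i$ and $f_j$ along $p$ — for instance, by showing that the minimal resolving information of $f_i$ with respect to $F$ is already determined by the run of $A$ along $p_i$, so that two functions reaching the same leaf share the same minimal resolving set and hence $T_i=T_j$. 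Making this implication precise is where the real work lies; once it is in place, the remaining counting bookkeeping is routine.
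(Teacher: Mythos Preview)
Your overall plan --- protocol-tree counting, using Lemma~\ref{ResolvingSetProperty} to conclude $T_i\subseteq V(p_i)$, and then bounding the number of leaves --- is exactly the paper's strategy. You also correctly isolate the genuine difficulty: from $\ell_i=\ell_j$ one only gets $T_i,T_j\subseteq V(p)$, and this alone does \emph{not} force $T_i=T_j$. Your suggested repair, however, does not work. Two distinct functions $f_i\neq f_j$ may very well answer identically every comparison along a single path $p$ while having different minimal resolving subsets $T_i\neq T_j$, both sitting inside $V(p)$; nothing in the minimality hypothesis or in the coincidence along $p$ pins $T_i$ down as ``the'' minimal resolving information determined by the run. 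So the natural leaf map $i\mapsto\ell_i$ need not be injective, and the line ``two functions reaching the same leaf share the same minimal resolving set'' is not justified.

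The paper sidesteps this by \emph{not} insisting on the natural map. It first observes that the $f_i$ are pairwise distinct (if $f_i=f_j$ then every resolving set for one is resolving for the other, and minimality gives $T_i=T_j$). It then argues for the existence of \emph{some} injective map $\phi:\mathscr{T}\to P(A)$ with $T_i\subseteq V(\phi(T_i))$, via a matching argument: if no such injection exists, one finds $i\neq j$ and a single path $p$ that is the \emph{only} path whose query set contains $T_i$ and also the only one containing $T_j$. This is then contradicted by the binary structure of the protocol: along $p$, look at the internal node where the last point of $T_i$ is first queried; that node has two children, and every root-to-leaf path through it already has all of $T_i$ in its query set, so there are at least two paths $p$ with $T_i\subseteq V(p)$. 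Once an injection into $P(A)$ exists, the binary-tree leaf count gives $|P(A)|\le 2^{\tau_F(A)}$ and you finish as you wrote.

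In short: your framework is right, your diagnosis of the gap is right, but your proposed fix (forcing $T_i=T_j$) is the wrong direction. Replace it with the paper's move: relax to \emph{any} injection $T_i\mapsto p$ with $T_i\subseteq V(p)$, and use the binary branching at the last $T_i$-query node to rule out the obstruction.
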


\begin{proof}
Let us show that all the functions from $\mathscr{G}$ are distinct. Indeed, if a pair $f_i,f_j \in \mathscr{G}$ coincides for $i \not= j$, then their resolving sets $T_i$ and $T_j$ will coincide by the minimality condition.

Let us consider an oracle algorithm $A$ to solve the problem $F$. We are going to show the existence of an injective map $\phi : \mathscr{T} \to P(A)$, where $P(A)$ is the set of all paths from the root to leaves of the algorithm $A$. Then the resulting estimate $\tau_F \geq \log_2 |\mathscr{T}|$ directly follows from the binarity property of $A$.

Let us consider some function $f_i \in \mathscr{G}$. By Lemma \ref{ResolvingSetProperty}, the set $V(p_i)$ of all points that the algorithm meats along the path $p_i \in P(A)$ on the input $f_i$ is resolving for $f_i$. By the minimality condition, $T_i \subseteq V(p_i)$. After a mapping of each set $T_i$ and each function $f_i$ to a path $p_i$ we have the resulting function $\phi$, which is, possibly, not injective. Let us show the existence of an injective map of the same type. Suppose to the contrary that it does not exist. Then there are sets $T_i,T_j \in \mathscr{T}$, for some $i \not = j$, and a path $p \in P(A)$, such that $T_i \subseteq V(p)$ and $T_j \subseteq V(p)$. Moreover, there are no other paths $\hat p \in P(A)$ with the property $T_i \subseteq V(\hat p)$ or $T_j \subseteq V(\hat p)$. The last observation contradicts to the binarity property of $A$.
\end{proof}

%
%
%
%

Let $T \subseteq \RR^n$, and
\begin{equation}\label{CNHullDef}
C_f(T) = \bigcup\{\cone(M|z): M \cup \{z\} \subseteq T,\, \max\limits_{x \in M} f(x) \leq f(z) \}.
\end{equation} We assume that if $|T| \leq 1$, then $C_f(T) = \emptyset$.

\begin{lemma}\label{SuccessiveMinAnalogLm}
Let $D \subseteq \RR^n$ be a bounded discrete set, $F = \GenFun{\CN{n}}{D}$, $R$ be a resolving set for a function $f \in \dom(f)$, and
\[
Z = \arg\min\{f(x) : x \in D \setminus C_f(R)\}.
\]

If $D \setminus C_f(R) \not= \emptyset$, then $Z \cap R \not= \emptyset$.
\end{lemma}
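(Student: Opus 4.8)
The plan is to argue by contradiction through the resolving property of $R$, along the lines of the sufficiency part of Theorem~\ref{NonSingularFunctionCriteria}. Assume $D \setminus C_f(R) \neq \emptyset$ and $Z \cap R = \emptyset$, and let $\beta$ be the common value of $f$ on $Z$. The goal is to produce $g \in \CN{n}$ that has an equivalent order with $f$ on $R$ but with $\RMIN{1}{D}{g} \cap \RMIN{1}{D}{f} = \emptyset$; this would contradict the definition of a resolving set and finish the proof.

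The competitor $g$ is obtained by pushing $f$ below its minimum on the set $Z$. Concretely, set $g_0(x) = f(x)$ for $x \notin Z$ and $g_0(x) = \delta$ for $x \in Z$, where $\delta < \min_{x \in D} f(x)$ (the minimum exists, since $D$ is bounded and discrete, hence finite). Since $Z \cap R = \emptyset$, we have $g_0 = f$ on $R$, so $g_0$ and $f$ have an equivalent order on $R$; moreover $\RMIN{1}{D}{g_0} = Z$. One also checks that $Z \cap \RMIN{1}{D}{f} = \emptyset$: a point of $D \setminus C_f(R)$ cannot attain $\min_{x\in D} f(x)$ once $C_f(R)$ contains $\RMIN{1}{D}{f}$, while if a global minimizer of $f$ does lie in $D \setminus C_f(R)$ it lies in $Z$, and the cone analysis below shows it then lies in $R$, which is excluded. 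The construction succeeds provided $g_0 \in \DCN{n}$: granting this, Corollary~\ref{ExtensionToCNCorr} of Theorem~\ref{ExtensionToCNTh} yields $g \in \CN{n}$ with $\RMIN{1}{D}{g} = \MIN_1(g_0) = Z$ (and agreeing with $g_0$, hence with $f$, on $R$), which completes the contradiction.

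So the real work is to verify $g_0 \in \DCN{n}$. Take $y, x\I1, \dots, x\I k \in D$ with $g_0(x\I1) \leq \dots \leq g_0(x\I k)$ and $y \in \cone(x\I1, \dots, x\I{k-1} \mid x\I k)$; we must show $g_0(y) \geq g_0(x\I k)$. If $y \in Z$ or $x\I k \in Z$ the inequality is immediate, because $\delta$ is the minimum of $g_0$ on $D$. Otherwise $g_0$ equals $f$ at $y$ and at $x\I k$. If $\beta \leq f(x\I k)$, then every $x\I i$ has $f$-value at most $f(x\I k)$ --- those in $Z$ have value $\beta$, the others have $f$-value equal to their sorted $g_0$-value --- and the second characterization of conic functions in Theorem~\ref{CNEquivalentDefinitions} applied to $f$ gives $f(y) \geq f(x\I k)$. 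The delicate case is $\beta > f(x\I k)$ with some generators $x\I i$ lying in $Z$: here I would discard those generators and re-express $y$ by intersecting the line through $x\I k$ and $y$ with $\conv(x\I1, \dots, x\I{k-1})$, exactly as in the proof of the equivalence of items~1 and~2 of Theorem~\ref{CNEquivalentDefinitions}, and then split that convex hull into its $Z$-part and its $C_f(R)$-part using quasiconvexity of $f$; this exhibits a cone built from points of $R$, whose apex has $f$-value at least $f(x\I k)$, that contains $y$ --- that is, $y \in C_f(R)$. This is impossible if $y$ were genuinely a place where discrete conicity of $g_0$ fails, since such a $y$ would have to lie in $D \setminus C_f(R)$. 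The main obstacle is precisely this last case: the bookkeeping needed to trade a lowered generator of $Z$ for points of $R$ without losing the containment of $y$ in the cone; the definition of $C_f(R)$ is tailored to make exactly this trade possible. Everything else --- order equivalence on $R$, the identification of $\RMIN{1}{D}{g}$, and the final appeal to resolvability --- is routine.
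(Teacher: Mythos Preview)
Your overall strategy matches the paper's --- argue by contradiction, lower the value of $f$ on (part of) $Z$, check discrete conicity, extend to a function in $\CN{n}$, and contradict resolvability of $R$ --- but two steps do not go through as written.

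\emph{First gap: the extension.} You invoke Corollary~\ref{ExtensionToCNCorr} to pass from $g_0\in\DCN{n}$ to some $g\in\CN{n}$ and then assert that $g$ ``agrees with $g_0$, hence with $f$, on $R$''. Corollary~\ref{ExtensionToCNCorr} promises only $\MIN_1(g)=\MIN_1(g_0)$ and $\MIN_2(g)\subseteq\MIN_2(g_0)$; it gives no control over the values of $g$ elsewhere, so you cannot conclude that $g$ and $f$ have an equivalent order on $R$. The paper instead applies Theorem~\ref{ExtensionToCNTh} directly and verifies the boundary condition $\Half^{=}_{g}(x)\subseteq\relbr(\conv(\Half^{\le}_{g}(x)))$ explicitly, obtaining a genuine extension $\hat g$ with $\hat g|_{R}=f|_{R}$.

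\emph{Second gap: the ``delicate case''.} In your check that $g_0\in\DCN{n}$, the case $\beta>f(x\I{k})$ with some generators in $Z$ is not settled. You argue towards $y\in C_f(R)$ and then claim this is ``impossible if $y$ were genuinely a place where discrete conicity of $g_0$ fails, since such a $y$ would have to lie in $D\setminus C_f(R)$''. This is backwards: if $y\notin Z$ and $g_0(y)<g_0(x\I{k})$, then $f(y)<f(x\I{k})<\beta$, which forces $y\in C_f(R)$, not $y\in D\setminus C_f(R)$. So membership in $C_f(R)$ is automatic here and yields no contradiction; what you actually need is $f(y)\ge f(x\I{k})$, and your sketch does not deliver this. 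The difficulty is structural: with $\dom(g_0)=D$, the generators $x\I1,\dots,x\I{k-1}$ need not lie in $R$ (only in $C_f(R)\cup Z$), so ``a cone built from points of $R$'' does not emerge from the splitting you describe.

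The paper avoids both issues by a crucial simplification: it defines $g$ only on $R\cup\{z\}$ for a \emph{single} $z\in Z$, not on all of $D$. Then every cone in the $\DCN{n}$ check has apex and generators in $R\cup\{z\}$, and the only nontrivial case is $z\in\cone(T\mid p)$ with $T\cup\{p\}\subseteq R$ and $\max_T f\le f(p)$ --- but that is precisely $z\in C_f(R)$, excluded since $z\in Z\subseteq D\setminus C_f(R)$. The same observation also dispatches the extension condition of Theorem~\ref{ExtensionToCNTh}. Restricting the domain to $R\cup\{z\}$ is the key move you are missing.
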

\begin{proof}
Suppose to the contrary that $Z \cap R = \emptyset$. Let us define the function $g : R \cup \{z\} \to \RR$ as follows:
$$
g(x) = \begin{cases}
\delta,\text{ for }x = z\\
f(x),\text{ for }x \not= z,\\
\end{cases}
$$ where $\delta < \min\{f(x) : x \in D\}$. We are going to show that $g \in \DCN{n}$. Assume that $T \cup \{p\} \subseteq R \cup \{z\}$ and $g(x) \leq g(p)$ for $x \in T$. Additionally, assume that all the points in $T \cup \{p\}$ are in general position. The claim is to show that the conditions from Definition \ref{DCNDef} of the class $\DCN{n}$ are satisfied for any $T$ and $p$. In other words, for any $x \in C = \cone(T|p) \cap \dom(g)$, we need to show that $g(x) \geq g(p)$. We consider the following possible cases:
\begin{description}
\item[Case 1: $f(p) > f(z)$ or $z \notin C \cup T$.] Since $f \in \CN{n}$, the conditions from Definition \ref{DCNDef} are satisfied for $f$. So, we have $z \notin C$ in both cases, and values of the functions $f$ and $g$ coincide on $C$. Therefore, the conditions are satisfied for $g$ on $C$ too.
\item[Case 2: $f(p) \leq f(z),\, z \in C$.] If $p = z$, then we do not have any restrictions on $C$, because $g(z)$ is the minimal value of the function $g$ on $D$. If $p \not= z$, then the case is not possible by the definition of $Z$.
\item[Case 3: $z \in T$.] Values of the functions $f$ and $g$ coincide on $C$. The conditions from Definition \ref{DCNDef} are satisfied for $g$ on $C$, because they are already satisfied for $f$ due to the inclusion $f \in \CN{n}$.
\end{description}

Now, we are going to show that the function $g$ can be extended to the function $\hat g \in \CN{n}$, such that $$\dom\hat g = \conv(\dom g),$$ $$\hat g(x) = g(x) \text{ for } x \in \dom(g).$$ By Theorem \ref{ExtensionToCNTh}, it is possible if and only if $\forall x \in \dom(g)\setminus \{z\}$
$$
\Half^{=}_{g}(x) \subseteq \relbr(\,\conv(\Half^{\leq}_{g}(x))\,).
$$
Since $f \in \CN{n}$, then, by Theorem \ref{ExtensionToCNTh}, the last conditions are satisfied for the points $x \in \dom(g),\, f(x) \geq f(z)$. Suppose for the sake of contradiction that $\exists y \in \dom(g)$, such that $f(y) < f(z)$ and
$y \in \relint(\,\conv(\Half^{\leq}_{g}(y))\,)$.

Since $f \in \CN{n}$, the last inclusion is not possible for $f$. Hence, $$y \in \relint(\conv(P,z))$$ for some subset $P \subseteq \Half^{\leq}_{f}(y)$. Therefore, $z \in \cone(P|y)$ that contradicts to the definition of $Z$.

Finally, we have the pair of functions $f,\hat g \in \CN{n}$, such that $f(x) = \hat g(x)$ for any $x \in R$, but $\RMIN{1}{D}{f} \cap \RMIN{1}{D}{\hat g} = \emptyset$. The last statement contradicts to the fact that $R$ is resolving for $f$.
\end{proof}

\section{Lower bounds of oracle based complexity}

In this section, we give lower comparison oracle-based complexity bounds for the following optimization problems: minimization of a quasiconvex function on any discrete set, minimization of a conic function on the set $r \cdot B_{\infty}^n \cap \ZZ^n$, minimization of an even conic function on the set $r \cdot B_{\infty}^n \cap \ZZ^n \setminus \{0\}$. The same bounds hold for the minimization problems of discrete conic functions and even discrete conic functions. The classes of even conic functions and discrete even conic functions are denoted by the symbols $\ECN{n}$ and $\EDCN{n}$ respectively.

The result and a proof of the following theorem may have already been known. But, we present a proof, because we can not give a correct reference and we want to make the presentation more complete.

\begin{theorem}
Let $M,D \subseteq \RR^n$ be a convex set and a discrete set, respectively, and $F = \GenFun{\QC{n}}{M \cap D}$.
Then $\tau_F \geq |M \cap D|-1$.
\end{theorem}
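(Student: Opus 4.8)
The plan is a decision‑tree (adversary) argument showing that every algorithm for $F$ must ask at least $N-1$ comparison queries, where $N:=|M\cap D|$; the cases $N\le 1$ and $N=\infty$ are immediate, so assume $2\le N<\infty$.

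Call $q\I1,\dots,q\I N$ a \emph{convex shelling order} of $M\cap D$ if $\conv(q\I1,\dots,q\I i)\cap(M\cap D)=\{q\I1,\dots,q\I i\}$ for every $i$; one exists, obtained by repeatedly deleting a vertex of the current convex hull. From any convex shelling order I build a witness $g\in\QC n$ with $\dom(g)=\RR^n$ and $g(q\I1)<g(q\I2)<\dots<g(q\I N)$, hence $\RMIN{1}{M\cap D}{g}=\{q\I1\}$: fixing $\varepsilon>0$ below the $l_2$‑distance from $\conv(q\I1,\dots,q\I i)$ to $(M\cap D)\setminus\{q\I1,\dots,q\I i\}$ for all $i<N$, set $K_i=\{x:d(x,\conv(q\I1,\dots,q\I i))<\varepsilon\}$, so that $K_1\subseteq\dots\subseteq K_{N-1}$, $K_N=\RR^n$, $K_i\cap(M\cap D)=\{q\I1,\dots,q\I i\}$, and put $g(x)=\min\{i:x\in K_i\}$; every sublevel set of $g$ is one of the convex sets $K_i$, so $g\in\QC n$. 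More generally, for a permutation $\pi$ the same construction applied to $q\I{\pi(1)},\dots,q\I{\pi(N)}$ produces a member of $\QC n$ precisely when each prefix of that ordering is again convexly closed, i.e. when $\pi$ is itself a convex shelling order.

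Fix one convex shelling order and let the adversary answer a query between $q\I i$ and $q\I j$ by $f(q\I i)\le f(q\I j)\iff i\le j$. Run a correct algorithm $A$; suppose it stops after $t$ queries along a root‑to‑leaf path $p$. Orienting each queried pair towards the endpoint of smaller index yields an acyclic digraph with $\le t$ arcs on $N$ vertices, hence $\ge N-t$ sinks, one of which is always $q\I1$. If $t\le N-2$ there is a second sink $q\I m\ne q\I1$. The crux is the \emph{Key Claim}: there is $g'\in\QC n$ agreeing with every answer the adversary gave and with $\RMIN{1}{M\cap D}{g'}=\{q\I m\}$. Granting it, $g$ and $g'$ give identical answers at every node of $p$, so $A$ follows $p$ on both inputs and returns a single point lying in $\{q\I1\}\cap\{q\I m\}=\emptyset$, a contradiction; hence $t\ge N-1$ and $\tau_F\ge N-1$.

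For the Key Claim, the answers define a partial order $\prec$ on $M\cap D$ contained in $\{(q\I i,q\I j):i<j\}$, with $q\I m$ a $\prec$‑minimal element (a sink). By the remark above, a witness built from a permutation $\pi$ realizes exactly the given answers iff $\pi$ is a linear extension of $\prec$, and it is admissible iff $\pi$ is a convex shelling order; so it suffices to find a convex shelling order of $M\cap D$ extending $\prec$ and starting at $q\I m$. Here one uses that the convexly closed subsets of $M\cap D$ form the closed sets of a convex geometry: the closure $S\mapsto\conv(S)\cap(M\cap D)$ obeys the anti‑exchange axiom, since if $y\ne z$, $y,z\notin\conv(S)\cap(M\cap D)$ and $z\in\conv(S\cup\{y\})$ then $z$ lies strictly between $y$ and $\conv(S)$, whence $y\notin\conv(S\cup\{z\})$. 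Consequently (accessibility of the associated antimatroid) every convexly closed $C\subsetneq M\cap D$ can be enlarged by one point to a convexly closed set, and starting from $\{q\I m\}$ and repeatedly adding such a point gives the desired order. \textbf{The main obstacle} is to carry this out while respecting $\prec$, i.e. to show that at each stage some point is simultaneously a legal one‑point enlargement and $\prec$‑minimal among the unused points; this can be arranged by induction on $N$ (delete $q\I1$, apply the statement to $M\cap D\setminus\{q\I1\}$, using that prefixes of a convex shelling order restrict to convex shelling orders of any subset, then reinsert $q\I1$ at the first position whose prefix would otherwise contain it in its convex hull). Two remarks complete the picture: queries involving a point outside $M\cap D$ are absorbed by keeping the adversary adaptive and answering them consistently with all surviving candidate witnesses at once — such a query reveals nothing about $f|_{M\cap D}$ beyond what quasiconvexity already forces; and the statement can alternatively be derived from Lemma~\ref{LowerBoundLm} by producing $2^{N-1}$ quasiconvex functions whose unique resolving set is $M\cap D$ (when $M\cap D$ is in convex position every ordering is a shelling order, which even yields the stronger bound $\log_2 N!$).
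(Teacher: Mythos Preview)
Your argument has a genuine gap: the Key Claim is false in general, and not merely hard to verify.  Take $M\cap D=\{a,b,c,d,e\}\subset\RR^2$ with $a=(0,0)$, $b=(1,0)$, $c=(1,1)$, $d=(0,1)$, $e=(\tfrac12,\tfrac12)$; the sequence $(a,b,e,c,d)$ is a convex shelling order, so your adversary may use it.  Against the three queries ``$f(b)\le f(a)$?'', ``$f(e)\le f(b)$?'', ``$f(c)\le f(e)$?''\ it answers ``no'' each time, whence any consistent $g\in\QC{2}$ satisfies $g(a)<g(b)<g(e)<g(c)$.  But $e\in\conv\{a,b,d\}$, so quasiconvexity forces $g(e)\le\max\{g(a),g(b),g(d)\}$ and hence $g(d)>g(b)>g(a)$: the minimum is \emph{forced} to be $a$ after $N-2=3$ queries.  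No $g'\in\QC{2}$ consistent with these answers has its minimum at the second sink $d$; and since the only order with first element $d$ extending $a\prec b\prec e\prec c$ is $(d,a,b,e,c)$, whose prefix $\{d,a,b\}$ is not convexly closed (its hull contains $e$), your shelling-order search was bound to fail as well.  The inductive patch you sketch does not rescue this: deleting $a$ leaves the same obstruction one level down, since $e$ lies on the segment $[b,d]$.

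The underlying issue is that by committing to a strict total order the adversary gives away information that quasiconvexity can exploit geometrically.  The paper sidesteps all of this with a one-line family: for each $z\in M\cap D$ set $f_z\equiv 1$ on $M$ except $f_z(z)=0$.  Each $f_z$ is quasiconvex (its only nontrivial sublevel set is the singleton $\{z\}$), and for a query ``$f(x)\le f(y)$?''\ with $x\ne y$ the answer is ``no'' only for $f_y$, so every query eliminates at most one of the $N$ candidates; queries with $y\notin M\cap D$ eliminate none.  Hence at least $N-1$ queries are required.  This family produces the needed alternative witness for free (it allows ties), exactly what your strict-order witnesses cannot do when the points are not in convex position.
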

\begin{proof}
Let us consider the quasiconvex function $f_z : M \to \RR$ that is equal to $1$ everywhere, except the point $z \in M \cap D$, where the function is equal to $0$. Let $\mathscr{F}$ be the set of such functions. Clearly, $|\mathscr{F}| = |M \cap D|$. Any call of the comparison oracle for points from the set $M \cap D$ separates the set $\mathscr{F}$ into two subsets: the first has the size 1, and the second one has $|M \cap D|-1$ elements. Hence, we need at least $|M \cap D|-1$ oracle calls. Oracle calls in points of the set $M \setminus D$ do not give any information about optimal points.
\end{proof}

The last theorem gives that it is needed $(2 \lfloor r \rfloor + 1)^n - 1$ oracle calls to minimize a quasiconvex function in the set $r \cdot B_{\infty}^n$. Hence, it is not possible to build an oracle-based minimization algorithm with a polynomial on $n$ and $r$ complexity.

\subsection{Lower bounds for the class $\CN{n}$}

Let $r \geq 1$ and $F$ denote the functional $\GenFun{\CN{n}}{r \cdot B_{\infty}^n \cap \ZZ^n}$ throughout this subsection.

We introduce a finite family $\mathscr{T}_{n,r}$ of sets $T \subseteq r \cdot B_{\infty}^n \cap \ZZ^n$ and a finite family $\mathscr{H}_{n,r}$ of functions $h_T : T \to 0:(3^{n}-1)$. For any $T \in \mathscr{T}_{n,r}$, the function $h_T$ is a bijection between $T$ and $0:(3^{n}-1)$. The family $\mathscr{T}_{1,r}$ contains all $2 r - 1$ possible sets of the type $T = \{i-1,i,i+1\}$ for any $|i| < r$. If $T = \{i-1,i,i+1\}$, then we put $h(i) = 0$, $h_T(i-1) = 1$ , and $h_T(i+1) = 2$. All possible functions $h_T$, defined in this way, form the family $\mathscr{H}_{1,r}$.

The family $\mathscr{T}_{n,r}$ is obtained from the family $\mathscr{T}_{n-1,r}$ in the following way. Let $T[c]$ be the set that is obtained from $T$ by adding a new coordinate with the value $c$ to each element of $T$. In other words, $T[c] = \{ (x,c) : x \in T\}$. For any integral $i$, satisfying to the inequality $|i|<r$, and for any triplet $(T_1,T_2,T_3) \in \mathscr{T}^3_{n-1,r}$, we construct a new triplet $(T_1[i-1], T_2[i], T_3[i+1])$ and put $T = T_1[i-1] \cup T_2[i] \cup T_3[i+1]$. All possible sets $T$ that can be obtained in this way form the family $\mathscr{T}_{n,r}$. More formally,

\begin{equation}\label{TSetDef}
\mathscr{T}_{n,r} = \bigcup\limits_{i = -r+1}^{r-1} \{T_1[i-1] \cup T_2[i] \cup T_3[i+1] : \text{ for } (T_1,T_2,T_3) \in \mathscr{T}^3_{n-1,r}\},
\end{equation}
where $T[c] = \{ (x,c) : x \in T\}$.

\begin{figure}[h]\label{TnrExampleFig}
\centering
\includegraphics[clip,scale=0.7]{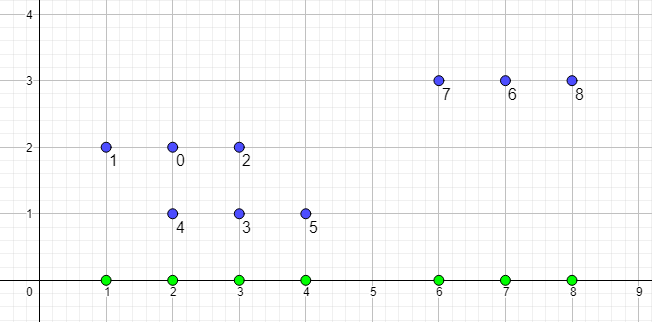}
\caption{The element $T=T_1[i-1] \cup T_2[i] \cup T_3[i+1]$ (blue points) of the family $\mathscr{T}_{2,r}$ built by the triplet $(T_1,T_2,T_3) \in \mathscr{T}^3_{1,r}$, where $T_1 = \{2,3,4\}$, $T_2 = \{1,2,3\}$, $T_3 = \{6,7,8\}$ (green points). The values of the function $h_T \in \mathscr{H}_{2,r}$ are drawn next to the blue points of $T$.}
\end{figure}

For any $T \in \mathscr{T}_{n,r}$, the function $h_T : T \to \RR$ of the class $\mathscr{H}_{n,r}$ is defined in the following way. Due to the formula \eqref{TSetDef}, we have $T = (T_1[i-1],T_2[i],T_3[i+1])$, for some triplet $(T_1,T_2,T_3) \in \mathscr{T}^3_{n-1,r}$, and some value $i$, satisfying the inequality $|i| < r$. Then
\begin{equation}\label{HSetDef}
h_T(y) = \begin{cases}
3^{n-1} + h_{T_1}(x),\text{ for }y=(x,i-1),\\
h_{T_2}(x),\text{ for }y=(x,i),\\
2 \cdot 3^{n-1} + h_{T_3}(x),\text{ for }y=(x,i+1),\\
\end{cases}
\end{equation}
where the functions $h_{T_k} \in \mathscr{H}_{n-1,r}$, for any $k \in 1:3$, are defined inductively in the same way. Figure 2 gives an example of a set $T \in \mathscr{T}_{2,r}$ and a function $h_T \in \mathscr{H}_{2,r}$ defined on this set.

Let us consider a set $T \in \mathscr{T}_{n,r}$ and the function $h = h_T \in \mathscr{H}_{n,r}$. Let the sequence $x\I1,x\I2,\dots,x\I{|T|}$ be formed by the points of $T$, sorted in increasing order of values of the function $h_T$ on them. It follows from definition that the sequence $x\I1,x\I2,\dots,x\I{|T|}$ has the following property:
\begin{equation}\label{THProperty}
x\I{i} \notin C_f(x\I1,x\I2,\dots,x\I{i-1}),\text{ for any }2 \leq i \leq |T|,
\end{equation} where the set $C_f(\cdot)$ is defined by the formula \eqref{CNHullDef}.

The property \eqref{THProperty} directly gives that $h_T \in \DCN{n}$. Due to Corollary \ref{ExtensionToCNCorr} and to Note \ref{ExtensionToCNNote} after Theorem \ref{ExtensionToCNTh}, the function $h_T$ can be extended to the function $f_T : r \cdot B_{\infty}^n \to \RR$ of the class $\CN{n}$. The set of all functions, obtained by this extension process, is denoted by $\mathscr{F}_{n,r}$. Additionally, Corollary \ref{NonSingularFunctionCriteriaCorr} states that the functions $h_T$ and $f_T$ are non-singular with respect to the problem $F$.

Let us show that the families $\mathscr{T}_{n,r}$ and $\mathscr{F}_{n,r}$ satisfy to the conditions of Lemma \ref{LowerBoundLm} and give a way to estimate the value of $\tau_F$.

\begin{theorem}
The inequality $\tau_F \geq 3^{n-1} \log_2 (2r-1)$ is true, where $F = \GenFun{\CN{n}}{r \cdot B_{\infty}^n \cap \ZZ^n}$. The same result is true for the class $\DCN{n}$.
\end{theorem}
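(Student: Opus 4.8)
The plan is to invoke Lemma~\ref{LowerBoundLm} with the sequence of sets $\mathscr{T} = \mathscr{T}_{n,r}$ and the sequence of functions $\mathscr{G} = \mathscr{F}_{n,r}$, so the whole argument reduces to two things: a cardinality count $|\mathscr{T}_{n,r}| \geq (2r-1)^{3^{n-1}}$, and a verification that each $T \in \mathscr{T}_{n,r}$ is contained in every resolving set of the corresponding $f_T$. Taking logarithms in Lemma~\ref{LowerBoundLm} then yields $\tau_F \geq \log_2 |\mathscr{T}_{n,r}| \geq 3^{n-1}\log_2(2r-1)$, and the same works verbatim for $\DCN{n}$ since the $h_T$ already lie in $\DCN{n}$ and are non-singular there.

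First I would establish the cardinality bound by induction on $n$. For $n=1$, $|\mathscr{T}_{1,r}| = 2r-1$ by construction. For the inductive step, the defining formula \eqref{TSetDef} builds elements of $\mathscr{T}_{n,r}$ from a choice of $i$ with $|i|<r$ (that is, $2r-1$ choices) together with a triplet $(T_1,T_2,T_3)\in\mathscr{T}_{n-1,r}^3$. I would argue that distinct data $(i,T_1,T_2,T_3)$ produce distinct sets $T = T_1[i-1]\cup T_2[i]\cup T_3[i+1]$: the value $i$ is recoverable as the middle of the three last-coordinate values occurring in $T$ (the occurring last-coordinate values are exactly $\{i-1,i,i+1\}$), and then $T_1, T_2, T_3$ are recovered as the slices of $T$ at those three levels. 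Hence $|\mathscr{T}_{n,r}| = (2r-1)\cdot |\mathscr{T}_{n-1,r}|^3$, and induction gives $|\mathscr{T}_{n,r}| = (2r-1)^{(3^n-1)/2} \geq (2r-1)^{3^{n-1}}$, which suffices.

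Next I would verify the minimality hypothesis of Lemma~\ref{LowerBoundLm}: if $R$ is a resolving set for $f_T$ with respect to $F$, then $T\subseteq R$. This is where Lemma~\ref{SuccessiveMinAnalogLm} does the work. Order the points of $T$ as $x^{(1)},\dots,x^{(|T|)}$ by increasing $h_T$-value; property \eqref{THProperty} says $x^{(i)}\notin C_f(x^{(1)},\dots,x^{(i-1)})$, and more importantly I would check that $x^{(i)}\notin C_f(R\cap\{x^{(1)},\dots,x^{(i-1)}\} \cup (\text{higher points}))$ is forced because $C_f$ is monotone only through function values — concretely, I would show by induction on $i$ that $x^{(i)}\in R$. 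Suppose $x^{(1)},\dots,x^{(i-1)}\in R$ but $x^{(i)}\notin R$. Then, since $f_T$ is non-singular and $x^{(i)}$ is (a representative of) the $i$-th minimum among the points of $T$, and since all points of $D$ outside $T$ have strictly larger $f_T$-value than every point of $T$ (this is ensured by the extension construction: $h_T$ surjects onto $0:(3^n-1)$ and the extension $f_T$ takes larger values off $T$), the set $D\setminus C_{f_T}(R)$ still contains $x^{(i)}$ because $x^{(i)}\notin C_{f_T}(x^{(1)},\dots,x^{(i-1)})$ by \eqref{THProperty}, while $R$ can only add points with $f_T$-value $\geq f_T(x^{(i)})$ into the cone hull without capturing $x^{(i)}$. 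Applying Lemma~\ref{SuccessiveMinAnalogLm} with this $R$ gives $\arg\min\{f_T(x): x\in D\setminus C_{f_T}(R)\}\cap R \neq\emptyset$, but that argmin is exactly $\{x^{(i)}\}$ (unique by the bijectivity of $h_T$), contradicting $x^{(i)}\notin R$. Hence $T\subseteq R$.

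The main obstacle I anticipate is the bookkeeping in the last step: making precise the claim that adding resolving-set points of value $\geq f_T(x^{(i)})$ to the cone-hull operator $C_{f_T}$ cannot swallow $x^{(i)}$, given that \eqref{THProperty} only controls cones built from the \emph{strictly earlier} points $x^{(1)},\dots,x^{(i-1)}$. I would handle this by showing directly from the combinatorial structure of $T$ (the recursive triple construction) that for each $i$, $x^{(i)}$ is not in the $C_{f_T}$-hull of $\{x\in T: x\neq x^{(i)}\}$ at all — i.e. the sequence is "$C_{f_T}$-independent" in the strong sense, not merely as a prefix order — and this strong independence is exactly what the nested interval construction of $\mathscr{T}_{n,r}$ guarantees, since in each coordinate the three blocks sit at consecutive integer levels $i-1,i,i+1$ with the level-$i$ block receiving the smallest values, so no point of a later block lies in a cone spanned by points with smaller values pointed at an earlier-valued point. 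Once that strong independence is in hand, the argument with Lemma~\ref{SuccessiveMinAnalogLm} closes cleanly.
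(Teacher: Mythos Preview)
Your overall strategy is exactly the paper's: invoke Lemma~\ref{LowerBoundLm} with $\mathscr{T}_{n,r}$ and $\mathscr{F}_{n,r}$, compute $|\mathscr{T}_{n,r}|=(2r-1)^{(3^n-1)/2}$ from the recurrence, and then show $T\subseteq R$ by induction using non-singularity for $x^{(1)}$ and Lemma~\ref{SuccessiveMinAnalogLm} together with property~\eqref{THProperty} for the step. The paper's own proof is no more detailed than this at the inductive step.

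The obstacle you flag is genuine, but your proposed resolution contains a false claim. You assert that ``all points of $D$ outside $T$ have strictly larger $f_T$-value than every point of $T$,'' justifying this by the surjection of $h_T$ onto $0{:}(3^n-1)$. That fails: $\conv(T)$ typically contains integer points of $D\setminus T$ (already for $n=2$ take $T_1=\{0,1,2\}$, $T_2=\{5,6,7\}$, $T_3=\{0,1,2\}$), and the extension of Theorem~\ref{ExtensionToCNTh} assigns such points values strictly between two consecutive $h_T$-levels, not values above $3^n-1$. So you cannot conclude that the argmin on $D\setminus C_{f_T}(R)$ is $\{x^{(i)}\}$ on those grounds. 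Your fallback (``strong $C_{f_T}$-independence'' from the combinatorics of the triple construction) is stated but not argued, and it is precisely this that needs work.

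What actually makes the induction go through is the level-set structure of the extension: by the construction in Theorem~\ref{ExtensionToCNTh} one has $\Half^{\leq}_{f_T}(x^{(i-1)})=P_{i-1}=\conv(x^{(1)},\dots,x^{(i-1)})$, and $P_{i-1}\subseteq \cone(x^{(1)},\dots,x^{(i-2)}\mid x^{(i-1)})\subseteq C_{f_T}(R)$ once $x^{(1)},\dots,x^{(i-1)}\in R$. Hence every $y\in D$ with $f_T(y)\leq f_T(x^{(i-1)})$ already lies in $C_{f_T}(R)$; the remaining task is to control integer points with values in the open interval $(f_T(x^{(i-1)}),f_T(x^{(i)}))$, i.e.\ integer points in $\relint(P_i\setminus P_{i-1})$, and to check $x^{(i)}\notin C_{f_T}(R)$. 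This is where the specific ``consecutive layers $i-1,i,i+1$'' geometry of $\mathscr{T}_{n,r}$ should be used explicitly, rather than the value-count heuristic you gave.
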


\begin{proof}
The formula for $\mathscr{T}_{n,r}$ gives the equality $|\mathscr{T}_{n,r}| = (2r-1) |\mathscr{T}_{(n-1),r}|^3,$ and we have $|\mathscr{T}_{n,r}| = (2r-1)^{\frac{3^n-1}{2}}$. The claim is to show that the families $\mathscr{T}_{n,r}$ and $\mathscr{F}_{n,r}$ satisfy to the conditions of Lemma \ref{LowerBoundLm}. Assuming that it is true, we have the resulting inequalities:
\[
\tau_F \geq \log_2 |\mathscr{T}_{n,r}| \geq \frac{3^n - 1}{2} \log_2 (2r-1) \geq 3^{n-1} \log_2 (2r-1).
\]

Let $R$ be a resolving set for $f_T$ with respect to the problem $F$. We will show that the inclusion $T \subseteq R$ holds. Since the function $f_T$ is non-singular, the minimum point of $f_T$ is in $R$. The property \eqref{THProperty} gives a possibility to use Lemma \ref{SuccessiveMinAnalogLm}. Using this Lemma and the induction principle, we conclude that $T \subseteq R$ and the theorem follows.

The problem $G = \GenFun{\DCN{n}}{r \cdot B_{\infty}^n \cap \ZZ^n}$ is simpler than the problem $F$, because oracle calls on non-integral points are allowed for the problem $F$. Hence, the same estimate holds for $\tau_G$.
\end{proof}

\subsection{Lower bounds for the class $\ECN{n}$}

Let $r \geq 1$ and $F$ be the functional $\GenFun{\ECN{n}}{r \cdot B_{\infty}^n \cap \ZZ^n \setminus \{0\}}$ until the end of this subsection. The point $0$ is removed from the optimization domain, because it is a trivial minimum.

Analogously, we consider a finite family $\mathscr{T}_{n,r}$ of sets $T \subseteq (r \cdot B_{\infty}^n) \cap \ZZ^n$ and a family $\mathscr{H}_{n,r}$ of functions $h_T : T \to 0:(2^{n}-1)$. For any $T \in \mathscr{T}_{n,r}$, the function $h_T$ is a bijection between $T$ and $0:(2^{n}-1)$. The family $\mathscr{T}_{1,r}$ contains all $r - 1$ possible sets of the type $T = \{i,i+1\}$ for any $0 < i < r$. If $T = \{i,i+1\}$, then we put $h_T(0) = -1$, $h_T(\pm i) = 0$, and $h_T(\pm (i+1)) = 1$. All possible functions $h_T$, defined this way, form the family $\mathscr{H}_{1,r}$.

The family $\mathscr{T}_{n,r}$ is obtained from the family $\mathscr{T}_{n-1,r}$ in the following way. For any integral $i$, satisfying to the inequality $0 < i < r$, and for any pair $(T_1,T_2) \in \mathscr{T}^2_{n-1,r}$, we a construct new pair $(T_1[i], T_2[i+1])$ and put $T = T_1[i] \cup T_2[i+1]$. All possible sets $T$ that can be obtained in this way form the family$\mathscr{T}_{n,r}$. More formally,

\begin{equation}\label{TSetEvenDef}
\mathscr{T}_{n,r} = \bigcup\limits_{i = 1}^{r-1} \{T_1[i] \cup T_2[i+1] : \text{ for } (T_1,T_2) \in \mathscr{T}^2_{n-1,r}\},
\end{equation}
where $T[c] = \{ (x,c) : x \in T\}$.

For any $T \in \mathscr{T}_{n,r}$, the function $h_T : T \to \RR$ of the class $\mathscr{H}_{n,r}$ is defined in the following way. Due to the formula \eqref{TSetEvenDef}, we have $T = (T_1[i],T_2[i+1])$, for some pair $(T_1,T_2) \in \mathscr{T}^2_{n-1,r}$, and some value $i$, satisfying the inequality $0 < i < r$. Then $h_T(0) = -1$ and, for any $y \not= 0$,
\begin{equation}\label{HSetEvenDef}
h_T(\pm y) = \begin{cases}
h_{T_1}(x),\text{ for }y=(x,i),\\
2^{n-1} + h_{T_2}(x),\text{ for }y=(x,i+1),\\
\end{cases}
\end{equation}
where the functions $h_{T_k} \in \mathscr{H}_{n-1,r}$, for any $k \in \{1,2\}$, are defined inductively in the same way.

Let us consider a set $T \in \mathscr{T}_{n,r}$ and the function $h = h_T \in \mathscr{H}_{n,r}$. Let the sequence $x\I1,x\I2,\dots,x\I{|T|}$ be formed by the points of $T$, sorted by increasing values of the function $h_T$ on them. It follows from definition that the sequence $x\I1,x\I2,\dots,x\I{|T|}$ has the following property:
\begin{equation}\label{THPropertyEven}
x\I{i} \notin C_f(0,x\I1,x\I2,\dots,x\I{i-1}),\text{ for any }2 \leq i \leq |T|,
\end{equation} where the set $C_f(\cdot)$ is defined by the formula \eqref{CNHullDef}.

The property \eqref{THPropertyEven} directly gives that $h_T \in \EDCN{n}$. Due to Corollary \ref{ExtensionToCNCorr} and to Note \ref{ExtensionToCNNote} after Theorem \ref{ExtensionToCNTh}, the function $h_T$ can be extended to the function $f_T : r \cdot B_{\infty}^n \to \RR$ of the class $\ECN{n}$. The set of all functions, obtained by this extension process, is denoted by $\mathscr{F}_{n,r}$. Additionally, Corollary \ref{NonSingularFunctionCriteriaCorr} states that the functions $h_T$ and $f_T$ are non-singular with respect to the problem $F$.

Let us show that the families $\mathscr{T}_{n,r}$ and $\mathscr{F}_{n,r}$ satisfy to the conditions of Lemma \ref{LowerBoundLm} and give a way to estimate the value of $\tau_F$.

\begin{theorem}
The inequality $\tau_F \geq (2^n-1) \log_2 (r-1)$ is true, where $F = \GenFun{\ECN{n}}{r \cdot B_{\infty}^n \cap \ZZ^n \setminus \{0\}}$. The same result is true for the class $\EDCN{n}$.
\end{theorem}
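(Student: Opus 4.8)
The plan is to mirror exactly the argument used in the non-even case, replacing the ternary branching on the new coordinate by binary branching and keeping the origin as a permanent ``base point'' that sits below all other values. First I would record the cardinality recursion coming from \eqref{TSetEvenDef}: since each element of $\mathscr{T}_{n,r}$ is determined by a choice of $i$ with $0 < i < r$ (there are $r-1$ of them) together with an ordered pair $(T_1,T_2) \in \mathscr{T}^2_{n-1,r}$, we get $|\mathscr{T}_{n,r}| = (r-1)\,|\mathscr{T}_{n-1,r}|^2$, and with $|\mathscr{T}_{1,r}| = r-1$ this solves to $|\mathscr{T}_{n,r}| = (r-1)^{2^n - 1}$.

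Next I would verify that the families $\mathscr{T}_{n,r}$ and $\mathscr{F}_{n,r}$ satisfy the hypotheses of Lemma \ref{LowerBoundLm}, i.e. that every resolving set $R$ for $f_T$ with respect to $F$ contains $T$. Here I would invoke Corollary \ref{NonSingularFunctionCriteriaCorr} to get that $f_T$ is non-singular (each $h_T$ is a bijection onto $0:(2^n-1)$, so the first and second minima among the nontrivial points are unique), which forces the overall minimum point of $f_T$ on $r\cdot B_\infty^n \cap \ZZ^n \setminus \{0\}$ into $R$. Then, using the ordering $x\I1,\dots,x\I{|T|}$ of the points of $T$ by increasing $h_T$-value and the key property \eqref{THPropertyEven} that $x\I{i} \notin C_f(0,x\I1,\dots,x\I{i-1})$, I would apply Lemma \ref{SuccessiveMinAnalogLm} inductively: at step $i$, the set $\{0, x\I1, \dots, x\I{i-1}\} \subseteq R$ together with the previously-recovered points of $R$ has $x\I{i}$ as (a representative of) the minimum of $f_T$ over the part of the domain lying outside $C_{f_T}$ of that set, so Lemma \ref{SuccessiveMinAnalogLm} places $x\I{i}$ in $R$ as well. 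Hence $T \subseteq R$. With the minimality hypothesis of Lemma \ref{LowerBoundLm} confirmed, that lemma yields
\[
\tau_F \;\geq\; \log_2 |\mathscr{T}_{n,r}| \;=\; (2^n - 1)\log_2(r-1).
\]
Finally, for $\EDCN{n}$ I would observe, as in the non-even proof, that the corresponding problem $G = \GenFun{\EDCN{n}}{r\cdot B_\infty^n \cap \ZZ^n \setminus \{0\}}$ is no easier than $F$, since allowing comparison-oracle calls at non-integral points (as $F$ does) can only help; the same $h_T \in \EDCN{n}$ witness the bound, so $\tau_G \geq (2^n-1)\log_2(r-1)$ too.

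The main obstacle I anticipate is the careful bookkeeping in the inductive application of Lemma \ref{SuccessiveMinAnalogLm}: one must check that the origin, which carries value $-1$ (strictly below every other value and which is \emph{excluded} from the optimization domain), plays the role required by $C_f(0, x\I1, \dots, x\I{i-1})$ in the definition \eqref{CNHullDef} of $C_f$, and that property \eqref{THPropertyEven} is exactly the statement that the newly-needed point at each stage is not yet ``covered'' by the conic hull of what has been recovered. The even symmetry $h_T(\pm y) = h_T(y)$ must also be tracked so that recovering one of $\pm y$ is enough — but since the optimization domain and the class $\ECN{n}$ are both symmetric, and resolving sets are defined via equivalent orderings, this is harmless. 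I would also double-check the base case $n=1$ against the explicit description of $\mathscr{T}_{1,r}$ and $\mathscr{H}_{1,r}$ to make sure the recursion \eqref{TSetEvenDef}--\eqref{HSetEvenDef} is seeded correctly.
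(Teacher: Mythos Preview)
Your proposal is correct and follows essentially the same route as the paper: compute $|\mathscr{T}_{n,r}| = (r-1)^{2^n-1}$ from the recurrence, use non-singularity (Corollary \ref{NonSingularFunctionCriteriaCorr}) to seed the induction, then apply Lemma \ref{SuccessiveMinAnalogLm} together with property \eqref{THPropertyEven} to force $T \subseteq R$ for every resolving set $R$, and finally invoke Lemma \ref{LowerBoundLm}. The paper's own proof is in fact terser than yours on exactly the bookkeeping point you flag (the role of the origin in $C_f(0,x\I1,\dots,x\I{i-1})$), and your remark that $G$ is \emph{no easier} than $F$ states the transfer to $\EDCN{n}$ more precisely than the paper's wording.
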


\begin{proof}
The formula \eqref{TSetEvenDef} gives the recurrence relation $|\mathscr{T}_{n,r}| = (r-1) |\mathscr{T}_{(n-1),r}|^2,$ and we have $|\mathscr{T}_{n,r}| = (r-1)^{2^n-1}$. The claim is to show that families $\mathscr{T}_{n,r}$ and $\mathscr{F}_{n,r}$ satisfy to the conditions of Lemma \ref{LowerBoundLm}. The resulting estimate directly follows from it.

Let $R$ be the resolving set for $f_T$ with respect to the problem $F$. The claim is to prove the inclusion $T \subseteq R$. Since the function $f_T$ is non-singular, the minimum point of $f_T$ is in $R$. The property \eqref{THPropertyEven} gives a possibility to use Lemma \ref{SuccessiveMinAnalogLm}. Using this Lemma and the induction principle, we conclude that $T \subseteq R$ and the theorem follows.

The problem $G = \GenFun{\EDCN{n}}{r \cdot B_{\infty}^n \cap \ZZ^n \setminus \{0\}}$ is simpler than the problem $F$, because oracle calls on non-integral points are allowed for the problem $F$. Hence, the same estimate holds for $\tau_G$.
\end{proof}

\section{Minimization of a conic function in a fixed dimension}

In this section, we are going to construct an algorithm based on the comparison oracle for the conic function integer minimization problem. We assume that an optimal integer point contains in the ball $a + r \cdot B_2^n$, for some integral $r \geq 1$. In our work, this problem is denoted by $\GenFun{\CN{n}}{a + r \cdot B_2^n \cap \ZZ^n}$. For the sake of simplicity, we also assume that a minimized function $f \in \CN{n}$ is defined in every point of $\RR^n$, e.g. $\dom(f) = \RR^n$.


Our algorithm uses ideas of seminal Lenstra's paper \cite{LEN}, as well as algorithms \cite{DADDIS,DPV11,HILDKOP,OERTELDIS,ICONV_MIN_REV}. Algorithms of this type are referred to as Lenstra's type algorithms. Our minimization procedure consists of two known ideas. The first idea is based on the concept of ``flatness'' from geometry of numbers that is also known as Khinchine theorem \cite{KHI}. If an initial ellipsoid has a sufficiently small width, e.g. it is flat by some direction, then we can slice the ellipsoid by relatively small amount of ellipsoids of a lower dimension along this direction. In the opposite case, when the initial ellipsoid has a sufficiently large width, it contains an integral point, and we can apply the second idea. The second idea is the cutting plane technique started from some initial ellipsoid containing an integral point, which gives us an ellipsoid of a lower volume that contains an integral point too. Yudin and Nemirovskii \cite{NEMIR,YUDIN} implemented this idea for the convex continuous function minimization problem, assuming that the $0$-th order oracle is given. We will apply the technique of Yudin and Nemirovskii for the comparison oracle and conic functions.

Further, we will describe important ideas from geometry of numbers, following \cite{HILDKOP}.

\subsection{Lattice Widths and the Shortest Vector Problem}

Finding flatness directions for branching on hyperplanes is the key technique of Lenstra's algorithm. To this end, we need to define the notion of a \emph{lattice width} of a convex set.

Let $P \subseteq \RR^n$ be a non-empty closed set and $c \in \RR^n$.
The \emph{width of $P$ along $c$} is the number
$$
\width_c(P) = \max\limits_{x \in P} c^\top x - \min\limits_{x \in P} c^\top x.
$$

The \emph{lattice width} of $P$ is defined as
$$
\width(P) = \min \{ \width_c(P) : c \in \ZZ^n \setminus \{0\} \},
$$ any $c$ that minimizes $\width(P)$ is called a \emph{flatness direction of $P$}. Clearly, flatness directions are invariant under translations and dilations.

\begin{theorem}[Khinchin's flatness theorem \cite{KHI}]\label{KHITh} Let $P \subseteq \RR^n$ be a convex body. Either $P$ contains an integer point, or $\width(P) \leq \omega(n)$, where $\omega(n)$ is a constant depending on the dimension only.
\end{theorem}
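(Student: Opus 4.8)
The plan is to reduce the statement to the case where $P$ is an ellipsoid by means of John's theorem, and then to reinterpret the two alternatives in terms of the covering radius and the first successive minimum of a suitable lattice, where the dichotomy becomes a transference inequality.

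First I would invoke John's theorem: for the convex body $P$ there is an ellipsoid $E = c_0 + A\,B_2^n$, with $c_0 \in \RR^n$ and $A \in \RR^{n \times n}$ invertible, such that $E \subseteq P \subseteq c_0 + n A\, B_2^n$. Since $\width(\cdot)$ is monotone under inclusion and homogeneous under dilation about a point, this gives $\width(P) \le n\,\width(E)$; and every integer point of $E$ is an integer point of $P$ because $E \subseteq P$. Consequently it suffices to prove the theorem for ellipsoids with some constant $\omega'(n)$ and then set $\omega(n) = n\,\omega'(n)$: if $P$ has no integer point then $E$ has none either, so $\width(E) \le \omega'(n)$ and $\width(P) \le n\,\omega'(n)$.

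Next I would turn the ellipsoid case into a lattice statement. For $u \in \RR^n$ one computes $\width_u(E) = \max_{\|y\|_2 \le 1} u^\top A y - \min_{\|y\|_2 \le 1} u^\top A y = 2\,\|A^\top u\|_2$, so that $\width(E) = 2\min_{u \in \ZZ^n \setminus \{0\}} \|A^\top u\|_2 = 2\,\lambda_1(\Lambda^*)$, where $\Lambda = A^{-1}\ZZ^n$ and $\Lambda^* = A^\top \ZZ^n$ is the dual lattice of $\Lambda$. On the other hand, $E$ contains an integer point $v$ precisely when $\|A^{-1}(v - c_0)\|_2 \le 1$, i.e. when the distance from $A^{-1}c_0$ to $\Lambda$ is at most $1$; if this fails then that distance, and hence the covering radius $\mu(\Lambda)$, exceeds $1$. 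So the dichotomy for $E$ reads: either $E$ contains an integer point, or $\mu(\Lambda) > 1$. In the latter case the transference inequality $\mu(\Lambda)\,\lambda_1(\Lambda^*) \le c(n)$ — valid with a constant $c(n)$ depending only on $n$ — yields $\lambda_1(\Lambda^*) \le c(n)$, hence $\width(E) = 2\,\lambda_1(\Lambda^*) \le 2\,c(n)$; thus $\omega'(n) = 2\,c(n)$ and $\omega(n) = 2n\,c(n)$ work.

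The main obstacle is the transference step: one needs the ``hard'' direction of the transference theorem, namely the upper bound on $\mu(\Lambda)\,\lambda_1(\Lambda^*)$. The sharp form ($c(n) = n$) is due to Banaszczyk and relies on Gaussian-measure techniques, but for the present purpose any bound $c(n)$ depending only on $n$ suffices, and such a bound can be obtained more elementarily by iterating Minkowski's convex body theorem. A secondary point to handle carefully is the correct John-ellipsoid factor: for a body that need not be centrally symmetric the blow-up is $n$ rather than $\sqrt{n}$, which only affects the value of the final constant.
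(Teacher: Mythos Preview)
The paper does not prove Khinchin's flatness theorem at all: it is merely quoted as a classical result with a citation to \cite{KHI}. What the paper does prove, a few lines later, is exactly the ellipsoid case (Theorems~\ref{BanLm} and~\ref{EzLm}, Corollary~\ref{FlatEllipsoidLm}): if an ellipsoid $E$ contains no integer point then the covering radius of the associated lattice exceeds $1$, and Banaszczyk's transference inequality $\lambda_1(\Lambda^*)\mu(\Lambda)\le f(n)\le n/2$ forces $\width(E)=2\lambda_1(\Lambda^*)\le 2f(n)\le n$. Your argument for ellipsoids is precisely this one, with the same identification $\width(E)=2\lambda_1(A^\top\ZZ^n)$ that the paper records in Note~\ref{FlatnessDirectionComputeNote}.

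Your contribution beyond what the paper contains is the John-ellipsoid reduction from a general convex body to an ellipsoid, losing only a factor of $n$ in the constant. That step is correct and standard, and it is genuinely additional content relative to the paper, which never needs the general flatness theorem (only the ellipsoid version enters the algorithm). The one caveat is that the transference bound $\mu(\Lambda)\lambda_1(\Lambda^*)\le c(n)$ is itself a nontrivial theorem; you rightly flag this, and the paper simply imports it from Banaszczyk~\cite{BAN}. So your proposal is a valid proof of the stated theorem, whereas the paper offers none.
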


The currently best known bound for $\omega(n)$ is $O(n^{4/3} \log^c n)$ \cite{RUD} and it is conjectured that $\omega(n) = \Theta(n)$ \cite{BAN_LITVAK}. We will see next that, for the specific case of ellipsoids, we can obtain this bound.

We write ellipsoids in the form $\El(A,a) = \{x \in \RR^n : (x-a)^\top (A^{-1})^{\top} A^{-1} (x - a) \leq 1\} = \{x \in \RR^n : \|x-a\|_{A^\top A} \leq 1\}$, where $||x||_B = \sqrt{x^\top B^{-1} x}$, $A \in \RR^{n \times n}$ is a non-singular matrix and $a \in \RR^n$.

\begin{note}\label{DilationLm} Clearly, $B \El(A,a) = \El(B A, B^{-1} a)$ for any non singular $B \in \RR^{n \times n}$.

Let $c \in \ZZ^n$ be a flatness direction for $E = \El(A,0)$. Then, for any $\beta \in \RR$, $c$ is a flatness direction for $\frac{1}{\beta} E = \El(\frac{1}{\beta} A,0)$ with
$$
\frac{1}{\beta} \width(E) = \width(\El(\frac{1}{\beta} A,0)).
$$
\end{note}

\subsection{The Shortest lattice Vector Problem (SVP) and the Closest lattice Vector Problem (CVP)}

Let $A \in \QQ^{m \times n}$ and $a \in \QQ^n$, where $m,n$ are positive integers. The SVP and CVP for the $l_2$ norm can be formulated as follows, respectively:
$$
\min\limits_{x \in \Lambda(A) \setminus \{0\}} ||x||_2,
$$
$$
\min\limits_{x \in \Lambda(A)} ||x - a||_2,
$$ where $\Lambda(A) = \{A t: t \in \ZZ^n\}$ is the lattice induced by columns of the matrix $A$.

Due to the papers \cite{DINUR,MICC} the SVP and the CVP are hard to approximate within a constant factor and a factor $n^{c/ \log \log n}$, respectively. The first polynomial-time approximation algorithm for the SVP was proposed by Lenstra, Lenstra, and Lov\'asz in \cite{LLL}. Shortly afterwards, Fincke and Pohst in \cite{FP83,FP85}, Kannan in \cite{KAN83,KAN} described the first exact SVP and CVP solvers. Kannan's solver has a computational complexity of  $2^{O(n\,\log n)}$ in a dependence on the dimension $n$. The first SVP and CVP solvers that achieve the complexity $2^{O(n)}$ were proposed by Ajtai, Kumar, Sivakumar \cite{AJKSK01,AJKSK02}, Micciancio and Voulgaris \cite{MICCVOUL}. The previously discussed solvers are used for the Euclidean norm. Recent results for general norms are presented in \cite{BLNAEW09,DADDIS,DPV11,EIS11}. The paper of Hanrot, Pujol, Stehl\'e \cite{SVPSUR} gives a good survey and deep analysis about SVP and CVP solvers.

\begin{theorem}[Kannan \cite{SVPSUR,KAN}]\label{KannanSVPSolverTh} There are deterministic $n^{n/2 + o(n)} \poly(\size(A),\size(a))$-time and $\poly(n,\size(A),\size(a))$-space algorithms to solve the SVP and the CVP.
\end{theorem}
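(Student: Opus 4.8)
The plan is to follow Kannan's enumeration method, organized around Hermite--Korkine--Zolotarev (HKZ) reduced bases; full details are in \cite{SVPSUR,KAN}. The engine is an enumeration subroutine: given any basis $b_1,\dots,b_n$ of $\Lambda(A)$ with Gram--Schmidt vectors $b_1^*,\dots,b_n^*$ and coefficients $\mu_{ij}$, a lattice vector $v=\sum_i x_i b_i$ decomposes as $v=\sum_i c_i b_i^*$ with $c_i=x_i+\sum_{j>i}\mu_{ji}x_j$, so that $\|v\|^2=\sum_i c_i^2\|b_i^*\|^2$. One enumerates all $v\in\Lambda$ with $\|v\|\le R$ by a depth--first search on $x_n,x_{n-1},\dots,x_1$: after $x_n,\dots,x_{i+1}$ are fixed, $x_i$ is confined to an interval of length $\le 2R/\|b_i^*\|$, and the surviving partial assignments at that stage are exactly the points of the projected lattice $\pi_i(\Lambda)$ of norm $\le R$. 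The search needs only $\poly(n,\size(A))$ space and $\poly$ arithmetic per node, and the same scheme solves CVP by centering the ball of radius $R$ at the target $a$ (using, coordinatewise, the Gram--Schmidt coordinates of $a$).

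\emph{HKZ reduction controls the node count.} Recall that $b_1,\dots,b_n$ is HKZ--reduced when it is size--reduced and $b_i^*$ is a shortest nonzero vector of $\pi_i(\Lambda)$ for every $i$; then $\|b_1\|=\lambda_1(\Lambda)$, $\lambda_1(\pi_i(\Lambda))=\|b_i^*\|$, and the Lagarias--Lenstra--Schnorr estimates give $\|b_1\|^2/\|b_i^*\|^2\le(i+3)/4$. Bounding the number of nodes of the subroutine at each level by a volume estimate (points of $\pi_i(\Lambda)$ in a ball versus $\det(\pi_i(\Lambda))=\prod_{j\ge i}\|b_j^*\|$, the latter controlled by the above inequalities) and summing over levels, one obtains a total of $n^{\,n/2+o(n)}$ nodes whenever $R$ is a fixed multiple of $\|b_1\|$. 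For SVP one runs the search with $R=\|b_1\|=\lambda_1$, the vector $b_1$ itself certifying that this radius suffices. For CVP one first applies Babai's nearest--plane rounding against the HKZ basis, obtaining a lattice point within distance $\tfrac12\bigl(\sum_i\|b_i^*\|^2\bigr)^{1/2}$ of $a$, and runs the search with that radius; the same bookkeeping keeps the count at $n^{\,n/2+o(n)}$.

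\emph{Producing an HKZ basis.} It remains to compute an HKZ--reduced basis within the same budget, by induction on $n$. From an LLL--reduced basis (polynomial time and space) one recursively HKZ--reduces the projected lattice $\pi_2(\Lambda)$ in dimension $n-1$, lifts, and then calls the enumeration subroutine to replace $b_1$ by a genuine shortest vector of $\Lambda$; one interleaves this with maintaining the intermediate ``quasi--HKZ'' invariant (the $\pi_2$--part HKZ--reduced together with $\|b_1\|\le 2\|b_2^*\|$, whence $\|b_1\|\le 2\lambda_1$), which keeps every enumeration radius within a bounded factor of the relevant $\lambda_1$, and only $\poly(n)$ passes are needed because the LLL gap is $2^{O(n)}$ and each pass shrinks it by a constant factor. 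Tallying the cost yields a recurrence of the shape $T(n)\le T(n-1)+n^{\,n/2+o(n)}\cdot\poly$, which telescopes to $T(n)=n^{\,n/2+o(n)}$, and polynomial space is preserved since the enumeration is depth--first.

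\emph{Main obstacle.} The delicate point is the last step: one must guarantee that throughout the recursive HKZ computation the enumeration is never invoked with a radius more than a controlled factor above the current $\lambda_1(\pi_i(\Lambda))$ --- and that the node count is estimated by the sharper volume bound rather than the crude product $\prod_i(1+2R/\|b_i^*\|)$ --- for otherwise the per--level factors compound into an $n^{\Omega(n)}$, or even $2^{\Omega(n^2)}$, blow-up. This bookkeeping, in the form of the quasi--HKZ invariant and Helfrich's refinement of Kannan's original analysis, is exactly where the exponent $n/2$, rather than merely $O(n)$, is won or lost.
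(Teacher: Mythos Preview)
The paper does not prove this theorem: it is stated with attribution to Kannan and to the survey~\cite{SVPSUR,KAN} and then used as a black box (alongside Theorem~\ref{MV_SVPSolverTh}) in the analysis of the minimization algorithm. There is therefore no ``paper's own proof'' to compare against.

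That said, your sketch is a faithful outline of the argument in the cited references. The enumeration tree over Gram--Schmidt coordinates, the role of HKZ reduction in bounding the number of nodes per level via $\|b_1\|^2/\|b_i^*\|^2 \le (i+3)/4$, the recursive computation of an HKZ basis with the quasi--HKZ invariant, and Babai's nearest--plane step for the CVP radius are all the standard ingredients. Your ``main obstacle'' paragraph correctly identifies where the exponent is decided: the naive product bound $\prod_i(1+2R/\|b_i^*\|)$ only gives $n^{O(n)}$, and the sharper level--by--level volume estimate (as carried out by Helfrich and later refined in~\cite{SVPSUR}) is what brings the exponent down. If anything, you could be more explicit that the $n^{n/2+o(n)}$ figure quoted in the theorem is the outcome of this refined analysis rather than of Kannan's original paper, which is why both references are cited; but this is a matter of attribution, not of mathematical content.
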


\begin{theorem}[Micciancio and Voulgaris \cite{SVPSUR,MICCVOUL}]\label{MV_SVPSolverTh} There are deterministic $2^{2n + o(n)} \poly(\size(A),\size(a))$-time and $2^{n + o(n)} \poly(\size(A),\size(a))$-space algorithms to solve the SVP and the CVP.
\end{theorem}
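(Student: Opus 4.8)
The plan is to follow the Voronoi-cell approach of Micciancio and Voulgaris, so rather than reprove everything I would reconstruct that argument. First I would reduce the SVP to a bounded number of CVP instances: if $v = A c$ is a shortest nonzero vector of $\Lambda(A)$, then not all entries of $c$ can be even, since otherwise $v/2 = A(c/2) \in \Lambda(A)$ would be a shorter nonzero vector; hence some coordinate $c_i$ is odd, so a shortest nonzero vector of $\Lambda(A)$ is a shortest element of one of the $n$ cosets $b_i + \Lambda_i$, where $b_i$ is the $i$-th column of $A$ and $\Lambda_i$ is the rank-$n$ sublattice obtained from $\Lambda(A)$ by doubling $b_i$. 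Each such shortest coset element is the output of a single closest-vector computation in $\Lambda_i$ with target $-b_i$, so it is enough to build the claimed CVP solver and call it $n$ times.

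For the CVP, the central object is the open Voronoi cell $V(\Lambda) = \{x \in \RR^n : \|x\|_2 < \|x - w\|_2 \text{ for all } w \in \Lambda \setminus \{0\}\}$, a centrally symmetric polytope cut out by the at most $2(2^n - 1)$ inequalities $2 w^\top x \le \|w\|_2^2$ indexed by the \emph{Voronoi relevant vectors} of $\Lambda$. Given this list of vectors, I would solve CVP with target $t$ by the iterative ``slicer'': maintain a lattice vector $v$, and while the residue $t - v$ violates some defining inequality of $V(\Lambda)$ witnessed by a relevant vector $w$, set $v \leftarrow v + w$, which strictly decreases $\|t - v\|_2$; once no inequality is violated we have $t - v \in \overline{V(\Lambda)}$ and $v$ is a closest lattice vector to $t$. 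After an initial reduction of $t$ modulo $\Lambda$ that brings it within a bounded multiple of $V(\Lambda)$, a potential-function argument shows the loop halts after polynomially many iterations, each scanning the $2^{O(n)}$ relevant vectors, so this stage runs in $2^{n + o(n)} \poly(\size(A), \size(a))$ time and space.

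The hard part --- and the heart of the whole theorem --- is computing the Voronoi relevant vectors. Here I would use the characterization that $w$ is Voronoi relevant exactly when $\pm w$ are the two strictly shortest vectors of the coset $w + 2\Lambda$, which reduces building $V(\Lambda)$ to $2^n - 1$ closest-vector computations in $2\Lambda$, one per nonzero class of $\Lambda / 2\Lambda$; the catch is that a CVP routine for $2\Lambda$ needs $V(2\Lambda) = 2\,V(\Lambda)$, i.e.\ precisely the object we are constructing. I would break this circularity as in the original proof, by induction on the rank: assuming the Voronoi cell of a rank-$(k-1)$ sublattice is already known, the $2^k$ coset-CVP instances needed to assemble the rank-$k$ Voronoi cell are handled by enumerating polynomially many translates of the known lower-rank cell along the new coordinate direction and running the rank-$(k-1)$ slicer on each. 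The $k$-th stage then costs $2^{2k + o(k)} \poly(\size(A))$ time and $2^{k + o(k)} \poly(\size(A))$ space, and summing over $k = 1, \dots, n$ keeps the grand total within $2^{2n + o(n)} \poly(\size(A), \size(a))$ time and $2^{n + o(n)} \poly(\size(A), \size(a))$ space; combined with the reduction of the first paragraph, this gives the stated bounds for both SVP and CVP, and the space bound is essentially tight because the $\Theta(2^n)$ Voronoi relevant vectors are written down explicitly.
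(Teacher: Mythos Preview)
The paper does not prove this theorem at all; it is quoted verbatim from the cited references \cite{SVPSUR,MICCVOUL} and used purely as a black box inside the algorithm of Theorem~\ref{ConicMinTh}. There is therefore no ``paper's own proof'' to compare against, and no proof is expected of you here beyond the citation.

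That said, your sketch is a reasonable high-level summary of the Micciancio--Voulgaris Voronoi-cell method. One point would need tightening if you actually fleshed it out: the assertion that the iterative slicer terminates after \emph{polynomially} many iterations is not what \cite{MICCVOUL} proves. Their argument is that moving a target from $2\,\overline{V(\Lambda)}$ into $\overline{V(\Lambda)}$ costs up to $2^{n}$ slicer steps (each scanning the $O(2^{n})$ relevant vectors), and a geometric scaling trick---LLL-reduce $t$ into $2^{O(n)}\overline{V}$, then halve $O(n)$ times---yields the overall $2^{2n+o(n)}$ time for CVP once the Voronoi cell is known. The $2^{n+o(n)}$ CVP-with-preprocessing time you allude to is a later refinement, not the original result being cited. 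None of this affects the stated $2^{2n+o(n)}$ time and $2^{n+o(n)}$ space bounds, which are dominated by the recursive Voronoi-cell construction in any case.
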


Kannan firstly observed that the SVP could be used to minimize the number of branching directions in his Lenstra's type algorithm \cite{KAN83,KAN}. We follow Eisenbrand in presenting this in the context of flatness directions \cite{50YEARS}.

\begin{note}\label{FlatnessDirectionComputeNote}
For an ellipsoid, a flatness direction can be computed by solving the SVP over the lattice $\Lambda(A^\top)$. To see this, consider the width along a direction $c$ of the ellipsoid $E = \El(A,0)$:
\begin{multline*}
\width_c(E) = \max\limits_{x \in E} c^\top x - \min\limits_{x \in E} c^\top x = \\
= \max\limits_{x \in B_{2}^n} c^\top A x - \min\limits_{x \in B_{2}^n} c^\top A x = 2 ||c^\top A||_2.
\end{multline*}
Finding the minimum lattice width is then the SVP over the lattice $\Lambda(A^\top)$.
\end{note}

\subsection{Results from geometry of numbers}

In this subsection, we again follow \cite{HILDKOP}. Geometry of numbers produces a small bound on the lattice width of an ellipsoid not containing an integer point. By considering our case of ellipsoids, we can produce an $O(n)$ bound. Using properties of LLL-reduced bases, Lenstra originally observed that this value is not exceed $2^{O(n^2)}$ \cite{LEN}. For an arbitrary lattice, the product of the length of a shortest vector in a lattice and the covering radius of the dual lattice is bounded by a constant $f(n)$ depending only on the dimension. Using the Fourier transform applied to a probability measure on a lattice, Banaszczyk showed that this function is bounded by $\frac{1}{2}n$.

\begin{theorem}[Banaszczyk \cite{BAN}]\label{BanLm}
Let $\Lambda \subset \RR^n$ be a lattice. Then $SV(\Lambda) \mu(\Lambda^*) \leq f(n) \leq \frac{1}{2} n$.
\end{theorem}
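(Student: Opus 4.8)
The plan is to follow Banaszczyk's Gaussian-measure method. For a lattice $M\subset\RR^n$ set $\rho(x)=e^{-\pi\|x\|^2}$, $\rho(M)=\sum_{v\in M}\rho(v)$ and, more generally, $\rho(M+x)=\sum_{v\in M}\rho(v+x)$ for the mass of a coset; the quantity to be controlled is the normalized coset mass $f_M(x)=\rho(M+x)/\rho(M)$. Applying the Poisson summation formula to the translated Gaussian $v\mapsto\rho(v+x)$ (the Gaussian being its own Fourier transform) and using $\det M^*=(\det M)^{-1}$ and $(M^*)^*=M$, one obtains the identity
\[
f_{M^*}(x)=\frac{1}{\rho(M)}\sum_{v\in M}\rho(v)\cos\bigl(2\pi\langle v,x\rangle\bigr).
\]
This already gives $0\le f_{M^*}(x)\le 1$ with equality at $x=0$, and, keeping only the $v=0$ term and bounding the remaining sum in absolute value by $\rho(M)-1$, the crucial pointwise lower bound $f_{M^*}(x)\ge 2/\rho(M)-1$ valid for all $x\in\RR^n$.

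The second ingredient is Banaszczyk's Gaussian tail estimate: for any lattice $M$, any $u\in\RR^n$, and any $c>1/\sqrt{2\pi}$,
\[
\rho\bigl((M+u)\setminus c\sqrt{n}\,B_2^n\bigr)<\theta(c)^n\,\rho(M),\qquad \theta(c)=c\sqrt{2\pi e}\,e^{-\pi c^2},
\]
where $\theta$ is strictly decreasing for $c>1/\sqrt{2\pi}$ and $\theta(1/\sqrt{2\pi})=1$, so $\theta(c)<1$ strictly on that range. I would prove it by the standard trick of multiplying each tail term $\rho(v+u)$, which has $\|v+u\|^2\ge c^2 n$, by the harmless factor $e^{\pi(\|v+u\|^2-c^2 n)}\ge 1$, and then comparing the resulting over-counted series with a product of $n$ one-dimensional Gaussian sums/integrals; this reduces the whole estimate to the single scalar computation that produces $\theta(c)$.

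With these two facts the transference inequality is a two-sided squeeze. Since $SV(M)\,\mu(M^*)$ is unchanged under the dilation $M\mapsto tM$ (because $SV(tM)=t\,SV(M)$ and $\mu((tM)^*)=t^{-1}\mu(M^*)$), I may fix the scale conveniently. If $SV(M)>c_1\sqrt n$, then the tail estimate applied to $M$ with $u=0$, $c=c_1$ gives $\rho(M)-1=\rho(M\setminus\{0\})<\theta(c_1)^n\rho(M)$, hence $\rho(M)<(1-\theta(c_1)^n)^{-1}$; choosing $c_1$ with $\theta(c_1)^n<\tfrac12$ forces $\rho(M)<2$ and therefore $f_{M^*}(x)\ge 2/\rho(M)-1>1-2\theta(c_1)^n>0$ uniformly in $x$. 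Next, apply the tail estimate to the dual lattice $M^*$ on an arbitrary coset $M^*+x$: whenever $c_2>1/\sqrt{2\pi}$ satisfies $\theta(c_2)^n<1-2\theta(c_1)^n$, we get
\[
\rho\bigl((M^*+x)\cap c_2\sqrt n\,B_2^n\bigr)>\bigl(f_{M^*}(x)-\theta(c_2)^n\bigr)\rho(M^*)>0,
\]
so some $w\in M^*$ has $\|x-w\|\le c_2\sqrt n$; since $x$ was arbitrary, $\mu(M^*)\le c_2\sqrt n$. Letting $t$ decrease to $c_1\sqrt n/SV(M)$, so that $SV(tM)\downarrow c_1\sqrt n$ while the product $SV(tM)\,\mu((tM)^*)=SV(M)\,\mu(M^*)$ stays constant, yields $SV(M)\,\mu(M^*)\le c_1c_2\,n$ for every admissible pair $(c_1,c_2)$.

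The genuine difficulty is improving the constant from this crude $c_1c_2\,n$ to exactly $\tfrac12 n$. An explicit choice such as $c_1=c_2=1/\sqrt2$ (admissible once $3\,\theta(1/\sqrt2)^n<1$) already gives $SV(M)\,\mu(M^*)\le\tfrac12 n$ for all moderately large $n$, but it is too weak in the smallest dimensions, where the theorem is attained, e.g.\ $SV(\ZZ)\,\mu(\ZZ^*)=\tfrac12$. To reach $\tfrac12 n$ in every dimension one must avoid discarding the oscillatory sum $\sum_{v\ne0}\rho(v)\cos(2\pi\langle v,x\rangle)$ in absolute value and instead exploit that $f_{M^*}$ is, up to normalization, the Fourier transform of a nonnegative measure --- hence a positive-definite function with $f_{M^*}(0)=1$ --- while simultaneously optimizing the Gaussian scale and the truncation radius $c\sqrt n$ in the tail estimate as functions of $n$. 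Combined with the exact (not merely asymptotic) form of the tail inequality, this optimization is precisely what delivers $SV(M)\,\mu(M^*)\le n/2$ for every lattice. I would present the clean $O(n)$ argument above in full and then reproduce the extremal parameter analysis of \cite{BAN} for the sharp constant.
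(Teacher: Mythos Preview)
The paper does not prove this theorem at all: it is stated as a cited result from \cite{BAN}, with no argument given beyond the one-line remark preceding it that Banaszczyk's method uses ``the Fourier transform applied to a probability measure on a lattice.'' So there is nothing in the paper to compare your proposal against; you have gone far beyond what the paper itself does by attempting to reproduce Banaszczyk's original argument.

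Your sketch of that argument is correct in outline. The Poisson-summation identity for $f_{M^*}$, the resulting lower bound $f_{M^*}(x)\ge 2/\rho(M)-1$, the Gaussian tail estimate with the function $\theta(c)=c\sqrt{2\pi e}\,e^{-\pi c^2}$, and the scale-invariance/squeeze conclusion are exactly the ingredients of \cite{BAN}. Your honest acknowledgment that the crude choice $c_1=c_2=1/\sqrt{2}$ only delivers $SV(\Lambda)\mu(\Lambda^*)\le n/2$ once $3\,\theta(1/\sqrt{2})^n<1$ (i.e.\ for $n\ge 3$) and fails for $n=1,2$ is accurate; to close that gap you really do need the sharper analysis in \cite{BAN} rather than the triangle-inequality discard of the oscillatory sum, and for $n=1$ one simply checks the bound directly. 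If you want to include a proof in a paper like this one, the clean $O(n)$ argument you wrote suffices for every application made of the theorem here (Corollary~\ref{FlatEllipsoidLm} only needs \emph{some} linear bound), and you can cite \cite{BAN} for the sharp constant in low dimensions.
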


If we assume that a specific ellipsoid does not contain a lattice point, then the covering radius of the associated lattice is greater than one. Since the lattice width of an ellipsoid is simply twice the length of a shortest vector, we obtain the following inequality for ellipsoids.

\begin{theorem}[Eisenbrand \cite{50YEARS}]\label{EzLm}
If $E \subset \RR^n$ is an ellipsoid that does not contain an integer point, then $\width(E) \leq 2 f(n)$.
\end{theorem}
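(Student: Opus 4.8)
The plan is to translate the statement entirely into the language of lattices and then apply Banaszczyk's transference inequality (Theorem~\ref{BanLm}). Recall $\El(A,a)=a+A\,B_2^n$, and write $E=\El(A,a)$. The three ingredients I will assemble are: (i) the lattice width of $E$ equals twice the length of a shortest nonzero vector of the lattice $\Lambda=\Lambda(A^\top)$; (ii) the dual lattice $\Lambda^{*}$ is $A^{-1}\ZZ^n$; and (iii) the hypothesis that $E$ contains no integer point is exactly the statement that the single point $A^{-1}a$ lies at distance greater than $1$ from $\Lambda^{*}$, hence $\mu(\Lambda^{*})>1$.

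First I would record (i): exactly as in Note~\ref{FlatnessDirectionComputeNote}, but retaining the center, for every $c\in\ZZ^n$ we have $\width_c(E)=\max_{x\in E}c^\top x-\min_{x\in E}c^\top x=2\,\|A^\top c\|_2$, since the constant term $c^\top a$ cancels. Hence $\width(E)=\min_{c\in\ZZ^n\setminus\{0\}}2\|A^\top c\|_2=2\,SV(\Lambda)$ with $\Lambda=\{A^\top c:c\in\ZZ^n\}=A^\top\ZZ^n$. Next, (ii): from the defining property $\Lambda^{*}=\{y:\langle y,v\rangle\in\ZZ\text{ for all }v\in\Lambda\}$ together with the identity $\langle y,A^\top c\rangle=\langle Ay,c\rangle$, we get $y\in\Lambda^{*}$ iff $Ay\in\ZZ^n$, i.e. $\Lambda^{*}=A^{-1}\ZZ^n$.

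The heart of the argument is (iii). A point $x$ lies in $E=a+A\,B_2^n$ iff $\|A^{-1}(x-a)\|_2\le 1$, so $E\cap\ZZ^n=\emptyset$ means $\|A^{-1}z-A^{-1}a\|_2>1$ for every $z\in\ZZ^n$; since $\Lambda^{*}=A^{-1}\ZZ^n$ is discrete this infimum over $z$ is attained, so $\operatorname{dist}(A^{-1}a,\Lambda^{*})>1$, whence $\mu(\Lambda^{*})\ge\operatorname{dist}(A^{-1}a,\Lambda^{*})>1$. Now Theorem~\ref{BanLm} applied to $\Lambda$ gives $SV(\Lambda)\,\mu(\Lambda^{*})\le f(n)$, so $SV(\Lambda)\le f(n)/\mu(\Lambda^{*})<f(n)$, and therefore $\width(E)=2\,SV(\Lambda)<2f(n)$, which in particular gives $\width(E)\le 2f(n)$.

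I do not expect a genuine obstacle: the proof is a chain of identifications. The only points needing care are the bookkeeping with transposes (making sure the width of $E$ is governed by $\Lambda(A^\top)$, not $\Lambda(A)$, and that its dual is $A^{-1}\ZZ^n$), and the clean observation that ``$E$ contains no integer point'' is literally a distance condition on the one point $A^{-1}a$ relative to the dual lattice, which is precisely what lets the covering radius $\mu(\Lambda^{*})$ enter Banaszczyk's bound.
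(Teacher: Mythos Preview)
Your proof is correct and follows exactly the approach the paper indicates: the paper does not give a formal proof block but sketches the argument in the sentence preceding the theorem, namely that the absence of an integer point forces the covering radius of the associated (dual) lattice to exceed $1$, while the lattice width equals twice the shortest vector length, so Banaszczyk's inequality yields $\width(E)\le 2f(n)$. You have simply made the identifications $\width(E)=2\,SV(A^\top\ZZ^n)$, $(A^\top\ZZ^n)^{*}=A^{-1}\ZZ^n$, and $\mu(A^{-1}\ZZ^n)>1$ explicit, which is precisely what the paper's sketch asserts.
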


Thus a convenient bound follows directly from previous theorems.
\begin{corollary}[Hildebrand and K\"oppe \cite{HILDKOP}]\label{FlatEllipsoidLm}
Let $E \subset \RR^n$ be an ellipsoid that does not contain an integer point, then $\width(E) \leq n$.
\end{corollary}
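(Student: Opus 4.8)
The plan is to obtain the bound by directly chaining the two preceding theorems. Theorem \ref{EzLm} already does all of the geometric work: an ellipsoid $E \subset \RR^n$ containing no integer point satisfies $\width(E) \le 2 f(n)$, where $f(n)$ is precisely the dimensional constant occurring in the transference inequality $SV(\Lambda)\,\mu(\Lambda^*) \le f(n)$ of Theorem \ref{BanLm}. Substituting the estimate $f(n) \le \tfrac12 n$ from Theorem \ref{BanLm} into this bound immediately gives $\width(E) \le 2 \cdot \tfrac12 n = n$, which is exactly the claimed inequality.

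For completeness I would briefly recall why Theorem \ref{EzLm} takes this form, so as to confirm that the two appearances of $f(n)$ refer to the same quantity. Writing $E = \El(A,a)$, Note \ref{FlatnessDirectionComputeNote} gives $\width(E) = 2\, SV(\Lambda(A^\top))$, while $E$ contains no integer point exactly when the covering radius of the lattice $\Lambda(A^{-1})$ exceeds $1$. Since $\Lambda(A^\top)$ is the dual lattice of $\Lambda(A^{-1})$, applying Theorem \ref{BanLm} to this pair of lattices together with the hypothesis that the covering radius is $> 1$ yields $SV(\Lambda(A^\top)) < f(n)$, whence $\width(E) < 2 f(n) \le n$.

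This is essentially a one-line corollary, so there is no genuine obstacle; the only point that requires a little care is the bookkeeping with a lattice and its dual — making sure that the factor $2$ coming from Note \ref{FlatnessDirectionComputeNote} and the covering-radius hypothesis are applied to the correct objects in the transference inequality of Theorem \ref{BanLm}. Once that identification is made, the estimate is a direct substitution of $f(n) \le \tfrac12 n$ into $\width(E) \le 2 f(n)$.
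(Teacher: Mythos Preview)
Your proposal is correct and matches the paper's approach exactly: the paper simply states that the bound follows directly from the two preceding theorems, and you have done precisely that substitution $\width(E) \le 2f(n) \le 2 \cdot \tfrac{1}{2}n = n$. Your supplementary paragraph unpacking Theorem~\ref{EzLm} via Note~\ref{FlatnessDirectionComputeNote} and the dual-lattice identification $\Lambda(A^{-1})^* = \Lambda(A^\top)$ is also correct and in fact provides more detail than the paper itself, which omits any proof for this corollary.
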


The paper \cite{ICONV_MIN_REV} contains a very simple proof of the following lemma.
\begin{theorem}[Oertel \cite{ICONV_MIN_REV}]\label{SmallVolumeConvexSetLm}
Let $K \subset \RR^n$ be a bounded convex set. If $\vol(K) < 1$, then there is a translation $t \in \RR^n$, such that $(t + K) \cap \ZZ^n = \emptyset$.
\end{theorem}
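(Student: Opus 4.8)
The plan is to use a volume-averaging argument over the half-open unit cube $C = [0,1)^n$. For $t \in \RR^n$ set $N(t) = |(t+K) \cap \ZZ^n|$, the number of integer points in the translate $t+K$; since $K$ is bounded, $N(t)$ is a finite non-negative integer for every $t$. It suffices to prove the averaging identity
\[
\int_{C} N(t)\, dt = \vol(K),
\]
because then $\int_C N(t)\, dt = \vol(K) < 1 = \vol(C)$, so $N$ cannot satisfy $N(t) \ge 1$ for almost every $t \in C$ (that would force $\int_C N \ge 1$); hence the set $\{t \in C : N(t) < 1\}$ has positive measure, and since $N$ is integer-valued this is exactly $\{t \in C : N(t) = 0\}$. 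Any such $t$ satisfies $(t+K) \cap \ZZ^n = \emptyset$, which is the desired conclusion.

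To establish the identity I would rewrite $N$ via indicator functions. Observe that $z \in t + K$ if and only if $t \in z - K$, so $N(t) = \sum_{z \in \ZZ^n} \mathbf{1}[\, t \in z - K\,]$. Since $K$ is a bounded convex set it is Lebesgue measurable, and all summands are non-negative, so Tonelli's theorem permits interchanging the integral and the sum (only finitely many terms contribute on the bounded set $C$, so there is no convergence subtlety):
\[
\int_{C} N(t)\, dt = \sum_{z \in \ZZ^n} \vol\bigl((z - K) \cap C\bigr).
\]
Translating the $z$-th term by $-z$ turns $(z-K)\cap C$ into $(-K) \cap (C - z)$, and since the cubes $\{C - z : z \in \ZZ^n\}$ tile $\RR^n$ without overlap, summing their volumes recovers $\vol(-K) = \vol(K)$. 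This yields the identity.

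The argument is short, and the only points requiring care are routine: the measurability of $K$ (immediate from boundedness and convexity) and the legitimacy of exchanging sum and integral (justified by non-negativity via Tonelli, the sum being locally finite). The conceptual core is the tiling step, once the problem has been recast in terms of the counting function $N$ and its average over the unit cube; I do not anticipate a genuine obstacle.
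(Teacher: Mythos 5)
Your proposal is correct: the averaging identity $\int_{[0,1)^n} |(t+K)\cap\ZZ^n|\,dt = \vol(K)$, justified by Tonelli and the tiling of $\RR^n$ by integer translates of the unit cube, immediately forces a translate with no integer points when $\vol(K)<1$ (convexity is in fact irrelevant beyond measurability). The paper itself gives no proof, deferring to the cited reference of Oertel, Wagner and Weismantel, and the ``very simple proof'' there is exactly this averaging argument, so your proposal coincides with the intended one.
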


Using results of Lemmas \ref{FlatEllipsoidLm} and \ref{SmallVolumeConvexSetLm} we have the following corollary.
\begin{corollary}\label{SmallVolumeEllipsoidLm}
Let $E \subset \RR^n$ be an ellipsoid and $\vol(E) < 1$, then $\width(E) \leq n$.
\end{corollary}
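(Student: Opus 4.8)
The plan is to combine Oertel's theorem (Theorem~\ref{SmallVolumeConvexSetLm}) with the flatness bound for lattice-point-free ellipsoids (Corollary~\ref{FlatEllipsoidLm}), the bridge between them being the translation invariance of the lattice width. First I would apply Theorem~\ref{SmallVolumeConvexSetLm} to the bounded convex set $K = E$: since $\vol(E) < 1$, there is a translation vector $t \in \RR^n$ with $(t + E) \cap \ZZ^n = \emptyset$. The translate $t + E$ is again an ellipsoid: if $E = \El(A,a)$, then $t + E = \El(A, a + t)$ (a special case of Note~\ref{DilationLm} with $B = I$). Hence Corollary~\ref{FlatEllipsoidLm} applies to $t+E$ and yields $\width(t + E) \leq n$.

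Second, I would observe that the lattice width is unchanged by translation, as already noted right after the definition of a flatness direction. Concretely, for every $c \in \ZZ^n \setminus \{0\}$ one has
$\width_c(t + E) = \max_{x \in t+E} c^\top x - \min_{x \in t+E} c^\top x = \bigl(c^\top t + \max_{x \in E} c^\top x\bigr) - \bigl(c^\top t + \min_{x \in E} c^\top x\bigr) = \width_c(E)$,
so minimizing over $c \in \ZZ^n \setminus \{0\}$ gives $\width(t + E) = \width(E)$. Combining the two steps, $\width(E) = \width(t + E) \leq n$, which is exactly the claim.

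There is essentially no real obstacle; the only points requiring (trivial) care are that $t + E$ is genuinely an ellipsoid, so that Corollary~\ref{FlatEllipsoidLm} is applicable to it, and that one and the same translation vector $t$ is used throughout, so that the width identity above is valid. Both are immediate, so the proof is just this two-line reduction.
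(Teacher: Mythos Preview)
Your proposal is correct and follows exactly the route the paper indicates: the corollary is stated as an immediate consequence of Theorem~\ref{SmallVolumeConvexSetLm} and Corollary~\ref{FlatEllipsoidLm}, and you have simply spelled out that combination together with the translation invariance of lattice width. One tiny remark: the fact that $t+E$ is again an ellipsoid is immediate from the definition $\El(A,a)=\{x:\|x-a\|_{A^\top A}\le 1\}$ rather than a special case of Note~\ref{DilationLm} (which concerns linear images, not translations), but this does not affect the argument.
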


\subsection{Cuts in ellipsoids based on the comparison oracle of a conic function}

Starting from this moment, we follow \cite[P.\,342--348]{NEMIR} and \cite{YUDIN}. Let $M_I = M \cap \ZZ^n$, for any set $M \subseteq \RR^n$.

Let $a \in \RR^n$ and $||a||_2 = 1$, then the rotation cone around a ray $a$ with an angle $\phi$ is denoted by the symbol
$$
C(a,\phi) = \{x \in \RR^n : (x,a) \geq ||x||_2 \cos \phi\},\text{ for } 0 \leq \phi \leq \frac{\pi}{2}.
$$ A cone $C$ is said to be a \emph{$\phi$-angle cone}, if $C(a,\phi)$ is included to some translation of $C$, for some $a$.

\begin{lemma}[Yudin and Nemirovskii \cite{NEMIR}, p.\,345]\label{ConeCutLm}
Let $W = r \cdot B_2^n$, $a \in \RR^n$ and $||a||_2 = 1$.

If $\cos \phi < 1/n$, then the set $W \setminus C(a,\phi)$ can be included to an ellipsoid with the volume $\beta^n(\phi) \vol(W)$ and the center $-r \gamma(\phi) a$, where
$$
\gamma(\phi) = \frac{1 - n \cos \phi}{1 + n},
$$
$$
\beta(\phi) = 2 (\sin \frac{\phi}{2})^{\frac{n-1}{n}} (\cos \frac{\phi}{2})^{\frac{n+1}{n}} \cfrac{n (\frac{n-1}{n+1})^{\frac{1}{2n}}}{\sqrt{n^2-1}}.
$$
\end{lemma}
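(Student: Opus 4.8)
The plan is to reduce the problem to a standard computation about half-balls and then to invoke the classical ellipsoid-of-minimum-volume estimate. First I would observe that, after a rotation, we may assume $a=e_n$ (the last coordinate vector), since both $W=r\cdot B_2^n$ and the volume ratio $\beta^n(\phi)$ are invariant under orthogonal transformations, and such a rotation maps $C(a,\phi)$ to $C(e_n,\phi)$ and the proposed center $-r\gamma(\phi)a$ to $-r\gamma(\phi)e_n$. Next, I would note the elementary geometric fact that the cone $C(e_n,\phi)$ contains the spherical cap $\{x\in W : x_n \geq r\cos\phi\}$; consequently $W\setminus C(e_n,\phi)\subseteq \{x\in W : x_n \leq r\cos\phi\}$, which is a ``truncated ball'' — the intersection of the ball $W$ with the halfspace $\{x_n\le r\cos\phi\}$. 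So it suffices to find an ellipsoid of the stated volume and center covering this truncated ball.

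The main computation is then the classical one: given a ball $r\cdot B_2^n$ cut by a hyperplane at signed height $t=r\cos\phi$ from the center (measuring the ``deep'' side), the minimum-volume ellipsoid containing the larger piece $\{x: x_n\le t\}\cap r B_2^n$ is the one obtained by shrinking along $e_n$ and stretching in the orthogonal directions, with explicit semiaxes. I would carry this out by writing the covering ellipsoid as $\El(A,c)$ with $A=\mathrm{diag}(\rho,\dots,\rho,\sigma)$ for scalars $\rho,\sigma>0$ and center $c=-\mu e_n$, imposing that it contain both the ``equatorial'' circle $\{x_n=t,\ x_1^2+\dots+x_{n-1}^2=r^2-t^2\}$ and the ``pole'' $-r e_n$, and optimizing the volume $\rho^{n-1}\sigma\,\vol(B_2^n)$ subject to these two contact conditions. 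Solving the resulting two equations for $\rho,\sigma,\mu$ in terms of $r$ and $t=r\cos\phi$ yields $\mu = r\gamma(\phi)$ with $\gamma(\phi)=\frac{1-n\cos\phi}{1+n}$ (note the hypothesis $\cos\phi<1/n$ is exactly what makes $\gamma(\phi)>0$, i.e. the center lies strictly on the far side, which is where the optimum sits), and it yields the volume ratio $\beta^n(\phi)$ with the stated $\beta(\phi)$; the factors $(\sin\frac\phi2)^{\frac{n-1}{n}}(\cos\frac\phi2)^{\frac{n+1}{n}}$ arise after rewriting $\sqrt{r^2-t^2}$ and $r-t$ via half-angle identities ($r-t=r(1-\cos\phi)=2r\sin^2\frac\phi2$, $\sqrt{r^2-t^2}=r\sin\phi=2r\sin\frac\phi2\cos\frac\phi2$), and the constant $\frac{n(\frac{n-1}{n+1})^{1/(2n)}}{\sqrt{n^2-1}}$ is the dimensional constant coming from the optimal $\rho,\sigma$.

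The step I expect to be the real obstacle — or at least the only place where care is required — is the explicit half-ball ellipsoid optimization and the bookkeeping that matches it to the exact formulas for $\gamma(\phi)$ and $\beta(\phi)$: one must set up the two tangency/containment equations correctly, verify that the candidate $\El(A,-r\gamma(\phi)a)$ genuinely contains the whole truncated ball (not merely the two extremal orbits used to pin down the parameters — this follows from convexity of $x\mapsto \|x-c\|^2_{A^\top A}$ restricted to each hyperplane slice and a one-variable check along the axis), and then simplify the volume expression using the half-angle substitutions above. Everything else — the reduction to $a=e_n$, the cap-inside-cone containment, and the passage from "covers the truncated ball" to "covers $W\setminus C(a,\phi)$" — is routine. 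Since the statement is attributed to Yudin and Nemirovskii with a page reference, I would follow their computation and merely present the reduction and the key identities, deferring the elementary optimization details.
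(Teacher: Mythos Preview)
The paper does not prove this lemma at all: it is stated with attribution to Yudin and Nemirovskii \cite{NEMIR}, p.\,345, and then used as a black box (together with the subsequent Note) in the proof of Corollary~\ref{ConeCutConstructionCor}. So there is no ``paper's own proof'' to compare against.

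That said, your plan is the right one and would yield a self-contained proof. The reduction by rotation to $a=e_n$ is harmless; the containment $\{x\in W:x_n\ge r\cos\phi\}\subseteq C(e_n,\phi)$ is immediate from $\|x\|_2\le r$, so indeed $W\setminus C(e_n,\phi)\subseteq\{x\in W:x_n\le r\cos\phi\}$; and the remaining step is exactly the classical L\"owner--John computation for a ball truncated by a hyperplane at signed height $t=r\cos\phi$ (a ``shallow cut'' since $0\le\cos\phi<1/n$, so more than half the ball is kept). Parametrizing the covering ellipsoid by $\mathrm{diag}(\rho,\dots,\rho,\sigma)$ with center $-\mu e_n$, imposing tangency at the equatorial $(n-2)$-sphere $\{x_n=t,\ \sum_{i<n}x_i^2=r^2-t^2\}$ and at the pole $-re_n$, and optimizing $\rho^{n-1}\sigma$ gives precisely $\mu=r\gamma(\phi)$ and volume ratio $\beta^n(\phi)$; the half-angle substitutions you list are what turn the raw expressions in $t=r\cos\phi$ into the stated formula. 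As a sanity check, at $\phi=\pi/2$ your computation collapses to the standard half-ball MVEE with center $-\frac{r}{n+1}e_n$ and volume ratio $n^n/\bigl((n-1)^{(n-1)/2}(n+1)^{(n+1)/2}\bigr)$, which matches $\beta^n(\pi/2)$. The only point to handle carefully is the one you flag: verifying that the ellipsoid determined by the two contact orbits actually contains the \emph{entire} truncated ball, which follows from the slice-by-slice convexity argument you outline.
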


For $\phi = \phi_n = \arccos\left(\frac{1}{2n}\right)$ we have
$$
\beta(\phi_n) = 1 - \frac{d_n}{n^2},\quad d_n > 0,
$$
$$
\lim\limits_{n \to \infty} d_n = 1/8, \quad \gamma(\phi_n) = \frac{1}{2(n+1)}.
$$

\begin{note}[Yudin and Nemirovskii \cite{NEMIR}, p.\,345]\label{YNLemmaNote}
The proposition of Lemma \ref{ConeCutLm} is true if the value of $\beta(\phi_n)$ is changed to $\hat \beta(\phi_n) = \frac{1}{2}(1+\beta(\phi_n))$ and the cone $C(a,\phi_n)$ is moved to any position, such that its apex is included to a $\hat g(n) r$-neighborhood of the center of $W$.

It was also noticed in \cite{NEMIR} that $\hat g(n) \geq \frac{\hat c}{n}$, where $\hat c$ is absolute constant.
\end{note}

The proof of the following lemma is actually given in \cite[p.\,345]{NEMIR}, but we present a proof based on our notation.
\begin{lemma}\label{ConeCutConstructionLm}
Let $W = r \cdot B_2^n$ for some integral $r \geq 1$, $\phi_n = \arccos\left(\frac{1}{2n}\right)$, $f \in \CN{n}$ and $W \subseteq \dom(f)$. Then, there is a polynomial-time oracle-based algorithm that computes points $x\I{1},x\I{2},\dots,x\I{n+1} \in W$, such that a cone $C = \cone(x\I{1},\dots,x\I{n}|x\I{n+1})$ is a $\phi_n$-angle cone, $0 \notin C$ and $f(x) \geq f(0)$, for any $x \in C$.
\end{lemma}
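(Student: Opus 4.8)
The plan is to build the cone $C$ as a simplicial cone whose apex $x^{(n+1)}$ and generating points $x^{(1)},\dots,x^{(n)}$ are all found with a bounded number of comparison-oracle calls, and to guarantee the three required properties by carefully choosing a geometric skeleton of directions in advance and then using binary search along rays to locate the actual points. First I would fix an orthonormal frame: let $a$ be any unit vector (say $e_n$), and consider the rotation cone $C(a,\phi_n)$ with $\cos\phi_n = 1/(2n)$. Inside $C(a,\phi_n)$ I can select $n$ explicit rays $\rho_1,\dots,\rho_n$ emanating from the origin whose conic hull, when given an apex at an appropriate point $x^{(n+1)}$ far enough along $a$, contains a translate of $C(a,\phi_n)$; this makes $C$ a $\phi_n$-angle cone by definition, and it can be arranged so that $0\notin C$ (the apex sits strictly ``above'' the origin along $a$ and the half-angle is acute). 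The points $x^{(1)},\dots,x^{(n)}$ will be chosen on the rays $x^{(n+1)}+\rho_i$, and all $n+1$ points must lie in $W=r\cdot B_2^n$, which is possible since $r\ge 1$ and the construction can be scaled down.

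The heart of the matter is the condition $f(x)\ge f(0)$ for all $x\in C$. Here I would invoke Definition~\ref{CNDef} (or equivalently part 2 of Theorem~\ref{CNEquivalentDefinitions}): if I can produce points $x^{(1)},\dots,x^{(n+1)}\in W$ with
$$
f(x^{(1)})\le f(x^{(2)})\le\dots\le f(x^{(n+1)})\le f(0)\quad\text{is \emph{not} what I want};
$$
rather, I want $0$ to play the role of a point whose value is dominated. The correct setup is: choose the points so that $f(x^{(i)})\le f(0)$ fails — instead I arrange $f(x^{(i)})\le f(x^{(n+1)})$ for $i=1,\dots,n$ and additionally $f(0)\le f(x^{(n+1)})$, and then note $0$ lies on the ``inner'' side. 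Concretely: pick the apex direction so that moving from the origin \emph{outward} into $C$ corresponds, via the conic property, to non-decreasing values. The clean way is to first locate, by one-dimensional binary search using the comparison oracle along the ray through $0$ and $a$, a point $x^{(n+1)}$ with $f(x^{(n+1)})\ge f(0)$ while $x^{(n+1)}$ stays in $W$ (such a point exists unless $f$ is minimized along the whole ray, a degenerate case handled separately); then along each ray $x^{(n+1)}+\mathbb{R}_{\ge 0}\rho_i$ binary-search for $x^{(i)}$ with $f(x^{(i)})\le f(x^{(n+1)})$. Since $\dom(f)=\RR^n$ and $f\in\CN{n}$, Theorem~\ref{CNEquivalentDefinitions}(2) then forces $f(y)\ge f(x^{(n+1)})$ for every $y\in\cone(x^{(1)},\dots,x^{(n)}\mid x^{(n+1)})=C$, and since we also need $f(x)\ge f(0)$, it suffices that $f(x^{(n+1)})\ge f(0)$, which we secured.

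The remaining work is bookkeeping: verifying that the $\rho_i$ can be chosen so $C$ genuinely contains a translate of $C(a,\phi_n)$ (a fixed, dimension-dependent linear-algebra computation — the $n$ rays should be spread symmetrically around $a$ at angle slightly less than $\phi_n$ so their conic hull is exactly a $\phi_n$-angle cone), confirming $0\notin C$ (immediate once the apex is pushed a positive distance $\delta>0$ along $a$ and $\phi_n<\pi/2$, which holds since $\cos\phi_n=1/(2n)>0$), and checking all binary searches terminate in polynomially many oracle calls — here the search interval has length $O(r)$ and we only need to localize a point to within the granularity needed for the $\phi_n$-angle property, so $O(n\log r)$ calls suffice, i.e. polynomial time. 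The main obstacle I anticipate is the degenerate case where $f$ fails to increase along the chosen direction within $W$ (for instance if $f$ is constant, or $W$ lies entirely in $\MIN_1(f)$, or the ray through $a$ hits a flat face of a sublevel set): there one must either pick a different $a$, or observe that in that situation \emph{any} simplicial $\phi_n$-angle cone $C$ with apex in $W$ already satisfies $f(x)\ge f(0)$ trivially because $f(0)$ is itself a minimum value on $W$ — so the lemma still holds, but the argument bifurcates and this case analysis is where care is needed.
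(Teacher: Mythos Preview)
Your approach has a genuine gap: you fix the cone's axis $a=e_n$ \emph{in advance} and then try to place the apex $x^{(n+1)}$ and the base points $x^{(1)},\dots,x^{(n)}$ by binary search along pre-specified rays. This cannot work for a general $f\in\CN{n}$, because the direction in which a valid $\phi_n$-angle cone can sit is determined by $f$ and must be discovered through oracle calls. Concretely, once you have $x^{(n+1)}$ with $f(x^{(n+1)})\ge f(0)$, you need $x^{(i)}$ at \emph{prescribed} locations (so the resulting cone is $\phi_n$-angled) with $f(x^{(i)})\le f(x^{(n+1)})$; but the sublevel set $\Half^{\le}_f(x^{(n+1)})$ need not meet your rays at any point other than $x^{(n+1)}$ itself. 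Take $f(x)=\|x-\tfrac{r}{2}e_1\|_2$: along the $e_n$-axis any $x^{(n+1)}$ has $f(x^{(n+1)})\ge f(0)$, yet the rays $x^{(n+1)}+\RR_{\ge0}\rho_i$ you lay out near $e_n$ can miss the ball $\Half^{\le}_f(x^{(n+1)})$ entirely, so the search produces nothing usable. Your fallback ``pick a different $a$'' is not an algorithm, and your degenerate-case claim is wrong: $f$ failing to strictly increase along one chosen ray does not imply $f(0)$ is the minimum of $f$ on $W$.

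There is also a sign error in the geometry. If $x^{(i)}=x^{(n+1)}+t_i\rho_i$ with $t_i>0$ and $\rho_i\in C(a,\phi_n)$, then
\[
\cone(x^{(1)},\dots,x^{(n)}\mid x^{(n+1)})=x^{(n+1)}+\cone(-\rho_1,\dots,-\rho_n),
\]
which opens in the $-a$ direction, i.e.\ \emph{toward} the origin; since $x^{(n+1)}$ lies on the positive $a$-axis, the origin is on the cone's axis and hence $0\in C$, contradicting what you assert.

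The paper's proof (following Yudin and Nemirovskii) is \emph{adaptive}. Inscribe a regular simplex $S$ in $W$, find its vertex $p^{(1)}$ of largest $f$-value, and build a regular pyramid $P_1$ with apex $p^{(1)}$, height along the radius $p^{(1)}$, and face angle $\phi_n$. If $p^{(1)}$ is the maximum among all vertices of $P_1$, output the cone with that apex; otherwise move to the vertex $p^{(2)}$ of maximum value and repeat. One has $\|p^{(k)}\|_2=r\cos^{k}\psi$ for a fixed $\psi$, so the apex drifts geometrically toward $0$ while $f(p^{(k)})$ strictly increases; once $p^{(k)}\in S$ quasiconvexity forces $f(p^{(k)})\le f(p^{(1)})$, a contradiction, so the process halts in polynomially many steps. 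At termination the apex is maximal among the pyramid's vertices, which is precisely the inequality $f(x^{(i)})\le f(x^{(n+1)})$ you were trying to arrange; the axis direction has been \emph{found} by the iteration rather than guessed. The conditions $0\notin C$ and $f\ge f(0)$ on $C$ then follow from the construction and the conic property.
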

\begin{proof}
Let $S$ be a regular simplex inscribed to $W$ and $s\I{1},s\I{2},\dots,s\I{n+1}$ be the vertices of $S$. Using a polynomial number of calls to the comparison oracle of $f$, we can find a maximal vertex of $S$. Suppose that it is $s\I1$. Let $p\I1 = s\I1$ be the apex of the regular pyramid $P_1$ defined as follows: $P_1$ has $n+1$ faces and vertices, the height of $P_1$ is collinear to the radius vector $p\I1$, the angles between the height and the faces are equal to $\phi$, if $v$ is vertex of $P_1$, then the radius vector $v$ is orthogonal to the edge $p\I1-v$. Let us suppose that the apex $p\I1$ of the pyramid $P_1$ has maximal value of the function $f$ between all vertices of $P_1$. Then, we output the cone $\cone(V|p\I1)$, where $V$ is the set of vertices of $P_1$ except $p\I1$. In the opposite case, let $p\I2 \not= p\I1$ be the vertex of the pyramid $P_1$ with the maximal value of $f$. In the next step of our iterative process, we build a regular pyramid $P_2$ with the apex $p\I2$ by the same rules as for $P_1$. The iterative process finishes at the moment, when the apex $p\I{k}$ of a pyramid $P_k$ becomes a vertex with a maximal value of the function $f$ between other vertices of $P_k$. After it we output the cone $\cone(V|p\I{k})$, where $V$ is the set of vertices of $P_k$ except $p\I{k}$.

Let us show that the process is finite. Definitely, by the construction we have that $||p\I{k}||_2 = \cos^{k}(\psi) ||p\I1||_2 = \cos^{k}(\psi)\,r$, where $\psi$ is the angle between the height and edges emerging from the apex $p\I{k}$ of the pyramid $P_k$. Clearly, the size of $\cos(\psi)$ polynomially depends on the size of $\cos(\phi_n) = \frac{1}{2n}$. Hence, after a polynomial on $n$ and $r$ number of steps we will have $||p\I{k}||_2 \leq \frac{1}{n} r$ and $p\I{k} \in S$. By Note \ref{CNSubsetQC}, the function $f$ is quasiconvex, so, $f(p\I{k}) \leq f(s\I1)=f(p\I1)$. The last inequality contradicts to the fact that the sequence $f(p\I{k})$ is strictly increasing. 

It is needed to note that faces of the pyramid $P_k$ can have irrational coefficients. So, we need to round them to rational values with a sufficient accuracy. It can be easily done by choosing the angle $\phi$ between the height and faces of $P_k$ slightly bigger than $\phi_n = \arccos\left(\frac{1}{2n}\right)$.

Let us show that the cone $C = \cone(V|p\I{k})$ satisfies to all of the required properties. Clearly, by construction, $C$ is $\phi_n$-angle cone and $0 \not \in C$, because the point $0$ is always included in the cone spanned by edges of the pyramid $P_k$, for each $k$. Let us show that $f(x) > f(0)$, for any $x \in C$. We can assume that $k > 1$, because in the opposite case the property is trivial by the quasiconvexity of $f$. Since $k > 1$, we have $p\I{k} \notin S$. By equivalent definition of the class $\CN{n}$ from Theorem \ref{CNEquivalentDefinitions}, we have $f(x) \geq f(p\I{k})$, for any $x \in C$. Since $f(p\I{k}) > f(p\I{1}) = f(s\I{1})$, we have $f(p\I{k}) > f(0)$, by the quasiconvexity of $f$.

Figure 3 is an illustration of the first three steps of this construction, when the pyramids $P_1$, $P_2$, $P_3$ are constructed. It can be shown that after two additional steps the final pyramid will be included in $S$.

\begin{figure}[ht!]\label{ConeCutConstructionFig}
\centering
\includegraphics[clip,scale=0.35]{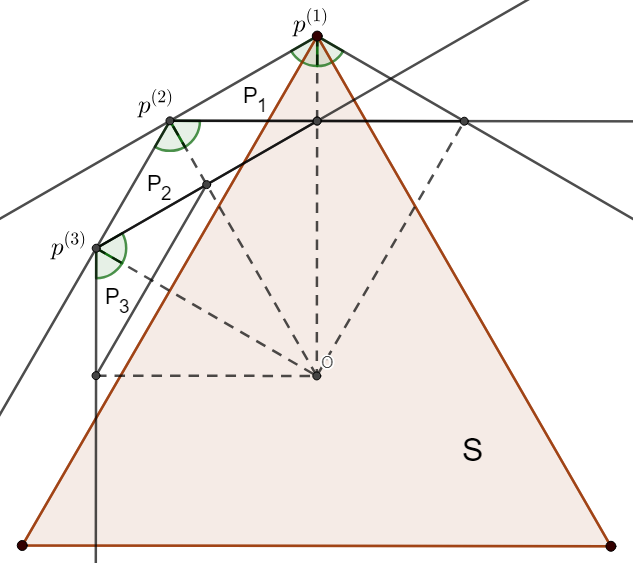}
\caption{An example for construction of the pyramids $P_1,P_2,P_3$. The apexes are $p\I1,p\I2, p\I3$, respectively, the initial simplex is $S$ (red), the angle between heights and faces of the pyramids is chosen to be equal $\pi/3$.}
\end{figure}

\end{proof}

Lemmas \ref{ConeCutLm}, \ref{ConeCutConstructionLm} give us the main tool to construct ellipsoids of a lower volume.

\begin{corollary}\label{ConeCutConstructionCor}
Let $W = r \cdot B_2^n$ for some integral $r \geq 1$, $f \in \CN{n}$ and $W \subseteq \dom(f)$. Let, additionally, $z \in \ZZ^n$ and $||z||_2 \leq \frac{\hat c}{2 n} r$.  Then there is a polynomial-time comparison oracle-based algorithm to construct an ellipsoid $E$ with the following properties:

1) $\vol(E) = {\hat \beta(\phi_n)}^n \vol(W)$, where the values $\phi_n$, $\hat \beta(\phi_n)$, $\hat c$ are defined after Lemma \ref{ConeCutLm};

2) $E \cap \RMIN{1}{W_I}{f} \not= \emptyset$.
\end{corollary}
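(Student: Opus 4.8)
The plan is to combine Lemma~\ref{ConeCutConstructionLm}, which produces a $\phi_n$-angle cone on which $f$ never dips below its value at a prescribed center, with Lemma~\ref{ConeCutLm} and Note~\ref{YNLemmaNote}, which enclose the ball-minus-such-a-cone in an ellipsoid whose volume is $\hat\beta(\phi_n)^n$ times the original. The only twist relative to the ``centered at $0$'' setting of Lemma~\ref{ConeCutConstructionLm} is that here the good center is the integer point $z$, so I would first shift $z$ to the origin and then ensure that the resulting cone still has its apex close enough to the center of $W$ for Note~\ref{YNLemmaNote} to apply.

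Concretely, I would pass to $g(x) = f(x+z)$, which lies in $\CN{n}$ by Theorem~\ref{CNOperations}(3) and satisfies $\dom(g) = \RR^n$, and apply Lemma~\ref{ConeCutConstructionLm} to $g$ on the ball $\rho\cdot B_2^n$, where $\rho$ is a positive integer with $\rho \le \frac{\hat c}{2n}r$ (the finitely many remaining values of $r$ --- those bounded by a constant multiple of $n$ --- force $z=0$ and are handled separately, or absorbed into the branch-on-flatness step of the overall algorithm). The lemma returns, in polynomial time and using only oracle calls, points $x\I1,\dots,x\I{n+1} \in \rho\cdot B_2^n$ such that $C_0 = \cone(x\I1,\dots,x\I{n}\,|\,x\I{n+1})$ is a $\phi_n$-angle cone, $0 \notin C_0$, and $g(x) \ge g(0)$ for all $x \in C_0$. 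Translating back, put $C = C_0 + z$. Then $C$ is again a $\phi_n$-angle cone, $z \notin C$, and $f(y) \ge f(z)$ for all $y \in C$, while the apex of $C$ satisfies
\[
\| x\I{n+1}+z \|_2 \;\le\; \| x\I{n+1} \|_2 + \|z\|_2 \;\le\; \rho + \frac{\hat c}{2n}\,r \;\le\; \frac{\hat c}{n}\,r \;\le\; \hat g(n)\,r,
\]
using $\hat g(n) \ge \hat c/n$ from Note~\ref{YNLemmaNote}.

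Next I would invoke Lemma~\ref{ConeCutLm} together with Note~\ref{YNLemmaNote}: since $C$ is a $\phi_n$-angle cone whose apex lies in the $\hat g(n)r$-neighborhood of the center $0$ of $W = r\cdot B_2^n$, the set $W\setminus C$ is contained in an ellipsoid $E$ with $\vol(E) = \hat\beta(\phi_n)^n\,\vol(W)$, and $E$ is obtained from the explicit formulas of Lemma~\ref{ConeCutLm} by a polynomial-time computation; together with Lemma~\ref{ConeCutConstructionLm} the whole procedure is polynomial-time and comparison-oracle based, which is property~1). For property~2), observe that $z \in W_I$ (it is integral with $\|z\|_2 \le r$), hence $\RMIN{1}{W_I}{f} \ne \emptyset$; pick any $x^* \in \RMIN{1}{W_I}{f}$. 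If $x^* \notin C$, then $x^* \in W_I\setminus C \subseteq W\setminus C \subseteq E$. If $x^* \in C$, then $f(x^*) \ge f(z)$ by the defining property of $C$, while $f(x^*) \le f(z)$ because $x^*$ minimizes $f$ over $W_I \ni z$; hence $f(x^*) = f(z)$, so $z \in \RMIN{1}{W_I}{f}$, and $z \in W\setminus C \subseteq E$ since $z \in W$ and $z \notin C$. In both cases $E \cap \RMIN{1}{W_I}{f} \ne \emptyset$.

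The one genuinely delicate point is the displayed chain of inequalities, i.e. forcing the apex of the cone returned by Lemma~\ref{ConeCutConstructionLm} into the $\hat g(n)r$-neighborhood of the center of $W$: this is exactly why the lemma must be run on a ball of radius of order $r/n$ centered at $z$ (running it on $W$ itself could push the apex onto the boundary) and why the hypothesis $\|z\|_2 \le \frac{\hat c}{2n}r$, combined with $\hat g(n) \ge \hat c/n$, is the appropriate one. Everything after that --- the conicity of $g$, the translation-invariance of the $\phi_n$-angle property and of the relations $f \ge f(z)$ and $z \notin C$ under $C_0 \mapsto C_0 + z$, and the small-$r$ bookkeeping --- is routine.
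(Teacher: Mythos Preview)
Your argument is essentially the paper's own proof. The paper also applies Lemma~\ref{ConeCutConstructionLm} to the ball $G = z + \frac{\hat c}{2n}r\cdot B_2^n$ centered at $z$ (your translation $g(x)=f(x+z)$ is just the explicit way of doing this), observes that $G \subseteq \frac{\hat c}{n}r\cdot B_2^n$ so the apex lies in the $\hat g(n)r$-neighborhood of the center of $W$, and then invokes Lemma~\ref{ConeCutLm} with Note~\ref{YNLemmaNote} to enclose $W\setminus C$ in the ellipsoid $E$. Your verification of property~2) via the case split $x^*\notin C$ versus $x^*\in C$ is actually more explicit than the paper's, which leaves that step to the reader; and your worry about $\rho$ being a positive integer is a technicality the paper silently ignores (the integrality hypothesis in Lemma~\ref{ConeCutConstructionLm} is only there for the complexity bookkeeping, not for the construction itself).
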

\begin{proof}
Consider a ball $G = z + \frac{\hat c}{2 n} r \cdot B_2^n$. Clearly, $G \subseteq \frac{\hat c}{n} r \cdot B_2^n$. Using Lemma \ref{ConeCutConstructionLm}, we construct a $\phi_n$-angle cone $C = \cone(x\I{1},\dots,x\I{n}|x\I{n+1})$, such that $x\I{1},x\I{2},\dots,x\I{n+1} \in G$, $z \notin C$ and $f(x) \geq f(z)$ for any $x \in C$. By Lemma \ref{ConeCutLm} and Note \ref{YNLemmaNote} after it, we can inscribe the set $W \setminus C$ into the desired ellipsoid $E$.
\end{proof}

Finally, before we give the main minimization algorithm we describe a preprocessing procedure that will be very helpful, when we need to reduce the dimension of an initial problem and to find a short lattice basis in the reduced space. 
Here we fully follow Dadush' IP-Preprocessing Algorithm \cite[pp.\,223--225]{DADDIS}.

\begin{lemma}\label{PreprocessingAlg} Let $L$ be an $n$-dimensional lattice given by a basis $B \in \QQ^{n \times n}$, and $H = \{x \in \RR^n : A x = b\}$ be an affine subspace, where $A \in \QQ^{m \times n}$ and $b \in \QQ^m$. 
Let also $E = a_0 + r \cdot B_2^n$, where $a_0 \in \QQ^n$ and $r \in \QQ_+$. Then, there is an algorithm with the bit complexity $2^{O(n \log n)} \poly(s)$, where $s$ is input size, which either decides that $E \cap L \cap H = \emptyset$ or returns
\begin{enumerate}
\item[\rm 1)] a shift $p \in L$,
\item[\rm 2)] a sublattice $L' \subseteq L$, $\dim L' = k \leq n$, given by a basis $b_1',\dots,b_k'$,
\item[\rm 3)] a vector $a_0' \in \linh L'$ and a radius $r'$, 
      $0 < r' \leq r$,
\end{enumerate} 
satisfying the following properties
\begin{enumerate}
\item[\rm 1)] $E \cap L \cap H = (E' \cap L') + p$, where 
$E' = \{x \in \linh L' :~ \|x - a_0'\|_2 \leq r' \}$, and $a_0'$ is the orthogonal projection of $a_0$ into $H$,
\item[\rm 2)] $\max\limits_{1 \leq i \leq k} \|b_i'\| \leq 2 \sqrt{k} r'$,
\item[\rm 3)] $a_0'$, $\rho'$, $b_1', \dots, b_k'$ and $p$ have polynomial in $s$ encodings.
\end{enumerate}
\end{lemma}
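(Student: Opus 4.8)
The plan is to reproduce, in our notation, Dadush' IP‑preprocessing algorithm \cite[pp.\,223--225]{DADDIS}. It runs in two phases. In the first we absorb the affine constraint by replacing the pair $(L,E)$ with the lattice $L\cap H$ sitting inside the direction space of $H$, together with the slice of $E$ by $H$. In the second phase we iteratively ``slice away'' those integral directions along which the current ball is so thin that it can meet at most one lattice layer; each slice lowers the rank by one, so this phase stops after at most $n$ rounds. The only costly ingredient is one exact shortest‑vector computation per round, which by Theorem \ref{KannanSVPSolverTh} runs in $n^{n/2+o(n)}\poly(s)$ time, and this is what yields the claimed $2^{O(n\log n)}\poly(s)$ bound.

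\textbf{Phase 1: removing $H$.} Using Hermite normal form together with Gaussian elimination over $\QQ$ on the columns of $B$ and on the system $Ax=b$, I would compute $L\cap H$ in polynomial time. It is either empty, in which case $E\cap L\cap H=\emptyset$ and we report this, or a coset $p_0+L_0$, where $L_0=L\cap(H-H)$ is a full‑rank sublattice of $V_0:=H-H=\linh L_0$ with a polynomial‑size basis. Then $E\cap(p_0+L_0)=p_0+(E_0\cap L_0)$, where $E_0$ is the Euclidean ball in $V_0$ whose centre is the orthogonal projection of $a_0-p_0$ onto $V_0$ — so that $p_0$ plus this centre is $\operatorname{proj}_H(a_0)$ — and whose radius is $r_0=\sqrt{r^2-\operatorname{dist}(a_0,H)^2}$. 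If $\operatorname{dist}(a_0,H)>r$ we report emptiness; if it equals $r$ we test the single remaining point and stop; otherwise $0<r_0\le r$ and we pass $(L_0,E_0)$ to the second phase.

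\textbf{Phase 2: slicing away thin directions.} Maintain a sublattice $\Lambda$ of rank $m$ in a subspace $V=\linh\Lambda$, a ball $B(c,\rho)\subseteq V$ with $0<\rho\le r_0$, and a shift accumulated in $L$; initially this is $(L_0,E_0)$ with empty shift. In each round, compute a shortest nonzero vector $w$ of the dual lattice $\Lambda^*$ by Theorem \ref{KannanSVPSolverTh}. As $w$ is primitive, it gives a decomposition $\Lambda=\ZZ b_m\oplus\Lambda_0$ with $\Lambda_0=\{x\in\Lambda:\langle w,x\rangle=0\}$ of rank $m-1$ and $\langle w,b_m\rangle=1$ (both computable by HNF), so that $x=\sum_i\mu_i b_i\in\Lambda$ has $\langle w,x\rangle=\mu_m$; moreover the values of $\langle w,\cdot\rangle$ on $B(c,\rho)$ fill an interval of length $2\rho\|w\|_2$, which equals the lattice width of $B(c,\rho)$ with respect to $\Lambda$ (cf. Note \ref{FlatnessDirectionComputeNote}). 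If $2\rho\|w\|_2<1$, this interval contains at most one integer $\hat\jmath$: if none, then $\Lambda\cap B(c,\rho)=\emptyset$ and we report emptiness; if exactly one, then every lattice point of the ball has $\mu_m=\hat\jmath$, hence $\Lambda\cap B(c,\rho)=\hat\jmath b_m+(\Lambda_0\cap B(c-\hat\jmath b_m,\rho))$. Splitting $\hat\jmath b_m-c=u+u^\perp$ with $u\in\linh\Lambda_0$ and $u^\perp$ orthogonal to $\linh\Lambda_0$, one checks $\|u^\perp\|_2\le\rho$ automatically (since $\hat\jmath$ lies in the projection interval), so this equals $\hat\jmath b_m+(\Lambda_0\cap B(-u,\rho'))$ with $\rho'=\sqrt{\rho^2-\|u^\perp\|_2^2}\le\rho$; we add $\hat\jmath b_m$ to the shift and recurse on $(\Lambda_0,B(-u,\rho'))$. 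If instead $2\rho\|w\|_2\ge1$, i.e. $\lambda_1(\Lambda^*)\ge 1/(2\rho)$, no integral direction is thin and we stop, outputting $L'=\Lambda$, $k=m$, $r'=\rho$, $a_0'=c$ and $p$ equal to the accumulated shift. (If the recursion instead descends to the trivial lattice $\Lambda=\{0\}$, we output $L'=\{0\}$, $k=0$, $a_0'=0$, any $r'\in(0,r]$, and $p$ equal to the accumulated shift.)

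\textbf{Correctness, encodings, and the main obstacle.} Property 1 holds because every reduction above is an \emph{exact} equality — the ``$\mu_m$ is forced'' argument gives ``$\subseteq$'' and restricting the ball to a coset gives ``$\supseteq$'' — and by construction $c$ is the orthogonal projection of $a_0$ into the current affine subspace, shifted by $-p$. Property 3 is mainly bookkeeping, but with one genuinely delicate point: the radii $r_0,\rho'$ are $\ell_2$‑distances and so generally irrational, so — exactly as in \cite{DADDIS} — I would carry the \emph{squared} radius $\rho^2\in\QQ$ throughout, test thinness in the rational form $4\rho^2\|w\|_2^2<1$, and bound every encoding length by a polynomial in $s$ using that HNF, orthogonal projection and Kannan's solver have polynomial‑size outputs and that there are at most $n$ rounds. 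Property 2 is the one number‑theoretic ingredient: at termination $\lambda_1((L')^*)\ge 1/(2r')$, so the transference inequality (Theorem \ref{BanLm}) applied to $(L')^*$ bounds the covering radius, hence the largest successive minimum, of $L'$ by $O(k)\,r'$; from this one extracts a basis of $L'$ whose vectors are polynomially bounded in $k$ times $r'$, and the precise constant $2\sqrt k$ claimed in the statement is obtained by the careful reduced‑basis construction of \cite[pp.\,223--225]{DADDIS}, which I would follow. I expect this last sharpening, together with the uniform encoding‑size control through the $O(n)$ slicing rounds, to be the most technical part of a full write‑up.
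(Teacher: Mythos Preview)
Your proposal is correct and coincides with the paper's approach: the paper's own proof is simply the reference ``See~\cite[pp.\,223--225]{DADDIS}'', and what you have written is a faithful sketch of Dadush' IP-preprocessing algorithm from that source. If anything, your write-up is more informative than the paper's, since you spell out the two phases, identify the SVP call as the dominant cost, flag the rationality issue with the squared radius, and correctly locate the transference-plus-reduced-basis step as the place where the constant $2\sqrt{k}$ is extracted.
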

\begin{proof}
See~\cite[pp.\,223--225]{DADDIS}.
\end{proof}

\subsection{The conic function integer minimization algorithm}

\begin{theorem}\label{ConicMinTh}
Let $F = \GenFun{\CN{n}}{r_0 \cdot B_2^n \cap \ZZ^n}$, for some integral $r_0 \geq 1$ and $f \in \CN{n}$ be a function defined everywhere on $\RR^n$. Then the problem $F(f)$ can be solved by an algorithm with the bit-complexity $T_{bit}(n,r_0) = 2^{O(n)} n^{2 n} \log r_0$ and the oracle-complexity $T_{oracle}(n,r_0) = (2 n^2)^{n + O(1)} \log r_0$. The space complexity of the algorithm is $2^{n + o(n)} \poly(\log r_0)$.
\end{theorem}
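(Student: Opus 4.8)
The plan is to combine the two geometric ingredients developed above — the flatness theorem for ellipsoids (Corollary~\ref{SmallVolumeEllipsoidLm}) and the Yudin--Nemirovskii ellipsoid-shrinking step adapted to the comparison oracle (Corollary~\ref{ConeCutConstructionCor}) — into a recursive Lenstra-type procedure, using Dadush' preprocessing (Lemma~\ref{PreprocessingAlg}) to keep the lattice bases short and to pass cleanly to lower-dimensional subproblems. First I would set up the recursion on the dimension $n$: the base case $n=0$ (or $n=1$) is trivial. In dimension $n$, start with the ball $W_0 = r_0 \cdot B_2^n$ and the lattice $\ZZ^n$. At a generic stage we have an ellipsoid $E = \El(A,a)$ and a sublattice $L$ (given by a short basis) such that $E$ still contains an optimal point of $f$ over $W_I = (r_0 \cdot B_2^n)\cap\ZZ^n$; after applying $A^{-1}$ we may assume $E$ is a ball $r \cdot B_2^n$ and $L$ is a general lattice with basis bounded as in Lemma~\ref{PreprocessingAlg}. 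We then compute a flatness direction $c$ of $E$ by solving an SVP instance over the dual lattice (Note~\ref{FlatnessDirectionComputeNote}), using Kannan's solver (Theorem~\ref{KannanSVPSolverTh}), giving the $n^{n/2+o(n)}\poly(s)$ bit cost and $\poly(n)$-space behaviour that will dominate the final bounds.

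The core dichotomy is the following. If $\width(E) \le \omega$ for the relevant $O(n)$ flatness bound, then $E\cap L$ lies on at most $O(n)$ lattice hyperplanes orthogonal to $c$; for each we invoke Lemma~\ref{PreprocessingAlg} to reduce to an $(n-1)$-dimensional instance with a new short basis, recurse, and keep the best integer point found — this is the branching step and contributes a factor roughly $n$ per dimension to the recursion. If instead $\width(E)$ is large, then in particular $E$ contains a lattice point $z$; moreover by repeatedly centering (cutting $E$ with the comparison oracle via the original ellipsoid-method move of Yudin--Nemirovskii, or simply rescaling) we may assume the current minimizer candidate $z$ satisfies $\|z - a\|_2 \le \frac{\hat c}{2n} r$, i.e.\ $z$ is $O(r/n)$-close to the center. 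At that point Corollary~\ref{ConeCutConstructionCor} applies: using a polynomial number of comparison-oracle calls it produces a $\phi_n$-angle cone $C$ with $z\notin C$ and $f\ge f(z)$ on $C$, hence an ellipsoid $E'$ with $\vol(E') = \hat\beta(\phi_n)^n\vol(E)$ still containing an optimal integer point (since the removed region $C$ contains no better point than $z$, and $z$ itself is retained in $E'$). Since $\hat\beta(\phi_n) = 1 - \Theta(1/n^2)$, after $O(n^2\log(\text{initial volume}/1)) = O(n^3\log r_0)$ such steps the volume drops below $1$, at which point Corollary~\ref{SmallVolumeEllipsoidLm} guarantees $\width(E)\le n$, so we necessarily fall into the flat case and branch down in dimension.

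Putting the accounting together: the depth of the dimension recursion is $n$; at each level we perform $O(n^3\log r_0)$ volume-reduction rounds before the set becomes flat; each round costs $\poly(n,\log r_0)$ comparison-oracle calls (Corollary~\ref{ConeCutConstructionCor}) and one SVP call of bit-cost $n^{n/2+o(n)}\poly(s)$; and the branching multiplies the number of subproblems by $O(n)$ per level, giving an overall $n^{O(n)}$ blow-up that, multiplied across the $n$ levels, yields the $2^{O(n)}n^{2n}\log r_0$ bit bound, the $(2n^2)^{n+O(1)}\log r_0$ oracle bound (the $(2n^2)^n$ coming from the product over levels of the per-level $O(n^2)$-type factors, with the $2$ tracking $\hat\beta(\phi_n)$'s dependence), and the $2^{n+o(n)}\poly(\log r_0)$ space bound inherited from Kannan's solver and Lemma~\ref{PreprocessingAlg}. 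The main obstacle I expect is the bookkeeping in the "large width" branch: one must argue carefully that the centering/rescaling needed to bring the candidate minimizer within the $\frac{\hat c}{2n}r$-neighborhood required by Corollary~\ref{ConeCutConstructionCor} can itself be realized with only the comparison oracle and without losing the optimal point, and that the rounding of the irrational pyramid faces in Lemma~\ref{ConeCutConstructionLm} does not accumulate error across the $\Theta(n^3\log r_0)$ iterations — this is exactly the "rough analysis" the authors flag as needing refinement, so I would present the volume-and-width invariant explicitly as a loop invariant and verify it is preserved by each of the three operations (SVP-flatness test, dimension reduction via preprocessing, and comparison-oracle cone cut).
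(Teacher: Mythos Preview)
Your overall architecture is right --- Lenstra-type recursion, volume reduction via Corollary~\ref{ConeCutConstructionCor} until the ellipsoid is flat, then branch on hyperplanes --- but there is a genuine gap exactly where you flag it, and the paper resolves it differently from what you suggest.

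The issue is obtaining a lattice point $z$ within $\frac{\hat c}{2n}r$ of the center, as required by Corollary~\ref{ConeCutConstructionCor}. You propose to get $z$ ``by repeatedly centering (cutting $E$ with the comparison oracle via the original ellipsoid-method move of Yudin--Nemirovskii, or simply rescaling)'', but this is circular: the Yudin--Nemirovskii cut itself already needs a near-center lattice point to produce the cone, so you cannot use it to manufacture one. The paper's fix is purely geometric and uses no oracle calls at all: test flatness not on $E$ but on the \emph{shrunk} ellipsoid $\hat E = \frac{\hat c}{2n}E$. If $\width(\hat E) > n$, then by Corollary~\ref{FlatEllipsoidLm} the small ellipsoid $\hat E$ already contains a lattice point, and a single CVP call (Theorem~\ref{MV_SVPSolverTh}) finds it; by construction this point lies within $\frac{\hat c}{2n}r$ of the center of $E$, so Corollary~\ref{ConeCutConstructionCor} applies directly. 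If instead $\width(\hat E)\le n$, then $\width(E)\le \frac{2n^2}{\hat c}$, and you branch.

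This shrink-and-CVP trick also explains the constants you were fudging. The branching factor is $\frac{2n^2}{\hat c}$, not $O(n)$: that is where the $(2n^2)^n$ in the oracle bound and the $n^{2n}$ in the bit bound come from (via the recursion $T(n)\le \poly(n)\log r_0 + \frac{2n^2}{\hat c}\,T(n-1)$), not from $\hat\beta(\phi_n)$ as you wrote. Finally, the stated $2^{n+o(n)}$ space bound comes from the Micciancio--Voulgaris solver (Theorem~\ref{MV_SVPSolverTh}), not Kannan's; Kannan gives polynomial space but a worse time exponent, which is precisely the content of the Note following the theorem.
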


\begin{proof}
Consider the following algorithm:
\begin{algorithmic}[1]
\REQUIRE The comparison oracle for a function $f$; a lattice $L \subseteq \ZZ^n$; a point $a \in \QQ^n$ and a radius $r \in \QQ_+$; a rational affine subspace $H$.
\ENSURE Return EMPTY if the set $(a + r \cdot B_2^n) \cap L \cap H$ is empty. If it is not, return the minimum point of $f$ in the set $(a + r \cdot B_2^n) \cap L \cap H$.
\STATE $(a,r,L,p) := \text{Preprocessing}(a,r,L,H)$.
\STATE Set $p$ as the origin, when we call the comparison oracle for $f$. In other words, we put $f(x) := f(x - p)$.
\STATE Set $E := \{ x \in \linh L : \|x - a\|_2 \leq r\}$, $n := \dim L$.
\REPEAT
\STATE {\bf Construction of a scaled ellipsoid.} Assuming that $E = \El(A,a)$, construct a scaled ellipsoid $\hat E = \frac{\hat c}{2 n} E = \El(\frac{\hat c}{2 n} A,a)$.

\STATE {\bf Computing width and flat direction of $\hat E$.} Compute the width $w$ and a flat direction $c \in L$ of the ellipsoid $\hat E$ using Note \ref{FlatnessDirectionComputeNote} and Theorem \ref{MV_SVPSolverTh}. And set $\alpha := \max\limits_{x \in \hat E} c^\top x$ and $\beta := \min\limits_{x \in \hat E} c^\top x$.

\IF{$w > n$}
\STATE {\bf Find an integral point inside $\hat E$.} Compute $z$ as a solution of the CVP in the lattice $L$ with respect to the norm $\|\cdot\|_{A^\top A}$ using Theorem \ref{MV_SVPSolverTh}. Since $w > n$, then, by Corollary \ref{FlatEllipsoidLm}, $\hat E \cap L \not= \emptyset$, and we have $z \in \hat E$.

\STATE {\bf Construct an ellipsoid of a lower volume than $E$.} After the map $x \to A x$ we have $E \to A a + B_2^n$, $\hat E \to A a + \frac{\hat c}{2 n} \cdot B_2^n$, $z \to A z$ and the comparison oracle of the function $f(x)$ transforms to an oracle for the function $f(A x)$. Applying Corollary \ref{ConeCutConstructionCor} to the ball $A a + B_2^n$ and the point $A z$, we construct an ellipsoid $W$ of the volume ${\hat \beta}^n(\phi_n) \vol(B_2^n)$ that contains the point $A z$. Suppose that $W = \El(B,b)$, for $B \in \QQ^{n \times n}$ and $b \in \QQ^n$. After the reverse transform $x \to A^{-1} x$ and $W \to A W$, we have the resulting ellipsoid $W = \El(A B,A^{-1} b)$ of a lower volume than $E$ that contains the integral point $z$.

\STATE $E := W$.
\ENDIF

\UNTIL{$w > n$}.

\FORALL{$t \in \{ c^\top x :~ x \in E\} \cap \ZZ$} \label{enumeration_step}
\STATE $H_t := \{x \in \linh L:~ c^\top x = t\}$.
\STATE Make the recursive call of the algorithm with the input parameters $(a,r,L,H_t)$.
\ENDFOR
\RETURN If all recursive calls of the algorithm have returned EMPTY, then return EMPTY. In the opposite case, return $p+y$, where $y$ is a minimum point of $f$ between all recursive calls. 
\end{algorithmic}

To solve the initial problem, we need to run this algorithm with the input parameters $r = r_0$, $a = 0$, $L = \ZZ^n$, $H = \{x \in \RR^n : 0^\top x = 0 \}$ and the comparison oracle for the function $f$.

The algorithm is correct due to the following invariant statements:

1) Each time, when we construct an ellipsoid $E$ of a lower volume in Line 9, we always have $E \cap \RMIN{1}{B_I}{f} \not= \emptyset$, due to Lemma \ref{ConeCutConstructionCor};

2) In Line 13, if $x \in W \cap L$, then $c^\top x = k$, for some $k \in \ZZ$, such that $\lceil \beta \frac{2 n}{\hat c} \rceil \leq k \leq \lfloor \alpha \frac{2 n}{\hat c} \rfloor$. The last fact follows from the lattice width definition.
%
%

Let $r$, $a$, $L$ and $H$ be input parameters of some recursive call of the algorithm.

Consider the oracle-complexity and the iterations number of the Repeat-Until loop in Lines 7-11. Let $v$ be a volume of the initial ellipsoid $E$ in Line 3. By Lemma \ref{PreprocessingAlg}, we have $r \leq r_0$. Clearly, $v \leq r_0^n \vol B_2^n$. Due to Lemma \ref{ConeCutConstructionCor}, the volume of the ellipsoid $E$ decreases with a speed of a geometric progression. Hence, after at most $\poly(n) \log v$ iterations we will have $\vol(E) < 1$. Due to Lemma \ref{SmallVolumeEllipsoidLm}, it gives that $\width(E) \leq n$. So, the cycle in the lines 1-9 has at most $\poly(n) \log v = \poly(n) \log r_0$ iterations and the same number of calls to the oracle of $f$. Therefore, the number of iterations and the total oracle-complexity of the cycle is $\poly(n) \log r_0$.

The cycle in Steps 11-13 consists of at most $\frac{2 n^2}{\hat c}$ recursive calls of the same algorithm. Clearly, the algorithm oracle complexity depends only from parameters $n$ and $r$. Then, we have
$$
\hat T_{oracle}(n,r_0) \leq \poly(n) \log r_0 + \frac{2 n^2}{\hat c} \hat T_{oracle}(n-1,r_0).
$$ Hence, $T_{oracle}(n,r_0) = (2 n^2)^n \poly(n) \log r_0$.

Let us estimate the algorithm bit-complexity. By Lemma \ref{PreprocessingAlg}, we always have $r \leq r_0$ and $\size(L) = \poly(n, \log r_0)$, and we can choose $r$, such that $\size(r) = O(\log r_0)$. Let us show, that $\size(H) = \poly(n, \log r_0)$. Definitely, for each recursive call we have $H = \{x \in \RR^n : c^\top x = t\}$, where $t \in [\alpha, \beta] \cap \ZZ$ and $c$ is a flat direction of the ellipsoid $E$ from previous recursive call. Let $E = \El(A,a)$. Doing the same analysis as in \cite[p. 99--101]{GRLOVSCH}, and noting that the loop in Lines 7-11 has $\poly(n) \log r_0$ iterations, it can be shown that $\size(A) = \poly(n,\log r_0)$. Since $c$ is a flat direction of $E$, we also have $\size(c) = \poly(n,\log r_0)$. Clearly, the hyperplane $c^\top x = t^\prime$ intersects the initial ball $B = \{x \in \linh(L) : \|x - a\|_2 \leq r\}$ for some $t^\prime \in [\alpha, \beta]$. Since the ellipsoid $E$ is flat after Line 11, we have $\alpha - \beta \leq \frac{2 n^2}{\hat c}$, and hence $$\max\{|\alpha|, |\beta|\} \leq \max\limits_{x \in B} c^\top x + \frac{2n^2}{\hat c}.$$ So the sizes of $\alpha$ and $\beta$ are polynomial by $n$ and $\log r_0$, and consequently $\size(H) = \poly(n, \log r_0)$. Finally, we need to show that the size of the parameter $a$ is also polynomial by $n$ and $\log r_0$, but it easily follows from the fact that $a$ is an orthogonal projection of the point $0$ on the affine space induced by flat directions, generated during recursive calls of the algorithm.

Let $s$ be the summary size of the input parameters $r$, $a$, $L$ and $H$. It has been already proven that $s = \poly(n, \log r_0)$. The operations is Lines 1,5,9 can be done in $\poly(n,s)$ time. Due to Note \ref{FlatnessDirectionComputeNote} and to Theorem \ref{MV_SVPSolverTh}, the complexity of steps 6,8 is equivalent to the complexity of solving the SVP and the CVP problems, which is $2^{O(n)} \poly(s)$. Therefore, the total bit-complexity of the loop in lines 4-11 is $2^{O(n)} \poly(\log r_0)$.

Clearly, the algorithm bit-complexity depends only from parameters $n$ and $r$. Finally, we have
$$
\hat T_{bit}(n,r_0) \leq 2^{O(n)} \poly(\log r_0) + \frac{2 n^2}{\hat c} \hat T_{bit}(n-1,r_0).
$$ Hence, $T_{bit}(n,r_0) = 2^{O(n)} n^{2 n} \poly(\log r_0)$.

\end{proof}

\begin{note}
If it is critical to have a polynomial space-complexity constraint to solve the considered problem, then we can use the Kannan's SVP and CVP solvers \cite{KAN83,KAN} instead of the solvers of Micciancio and Voulgaris \cite{MICCVOUL}, see Theorems \ref{KannanSVPSolverTh}, \ref{MV_SVPSolverTh}. It gives
$$
T_{bit}(n,r) = n^{2.5 n + o(n)} \poly(\log r),
$$ the oracle-complexity $T_{oracle}(n,r)$ states the same.
\end{note}

\subsection{Examples of concrete problems that can be expressed by conic functions}

In this section, we show that integer minimization of a quasiconvex polynomial with quasiconvex polynomial constraints can be expressed by the language of conic functions. Using the result of Theorem \ref{ConicMinTh}, the last fact repeats the main result of the work \cite{HILDKOP} of Hildebrand and K\"oppe.

Consider the problem
\begin{align}
&f(x) \to \min\label{ConstraintPolyMinProblem}\\
&\begin{cases}
g_i(x) \leq 0,\text{ for }i \in 1:m, \\
x \in r \cdot B_2^n \cap \ZZ^n,
\end{cases}\notag
\end{align} where $f$ and $g_i$ be quasiconvex polynomials. It has been shown (see the problem \eqref{ConditionalMinProblem}) that this problem is equivalent to the problem $\GenFun{h(x)}{r \cdot B_2^n \cap \ZZ^n}$, where $h(x) = (\,(t(x))_+, f(x)\,)$ and $t(x) = \max \{ g_i(x): 1 \leq i \leq m \}$. The complexity of the lexicographical order oracle for the function $h(x)$ is $O(m\,d\,M\,\poly(n, \log r))$, where $d$ and $M$ are the maximal degree and the number of monomials in a sparse encoding of the polynomials respectively. Using the Theorem \ref{ConicMinTh}, we have an algorithm for the problem \eqref{ConstraintPolyMinProblem} with bit-complexity $2^{O(n)} n^{2 n}\,m\,d\,M\,(\log r)^{O(1)}$, which repeats the main result of the paper \cite{HILDKOP}.

Additionally, our tools can be helpful to design FPT-algorithms for some combinatorial optimization problems. See papers \cite{FIXEDP,GAVKNOP} for details.

Let us present another example of a problem that can be expressed using this language. Let $a,b$ be two positive integers, the problem to compute Greatest Common Divisor (GCD) of two integers can be formulated as follows:
\begin{align}
&|a x_1 - b x_2| \to \min\label{GCDProblem}\\
&\begin{cases}
x \in \ZZ^2 \setminus \{0\}.
\end{cases}\notag
\end{align} Clearly, the optimal point of this problem contains in the ball of the radius $r = \sqrt{a^2 + b^2}$. Since $f(x) = |a x_1 - b x_2|$ is an even conic function, the GCD problem is equivalent to the even conic function minimization problem. The paper \cite{2DIMMIN} contains an algorithm for such problems in the dimension $2$ based on calls to the $0$-th order oracle with the orcle-based complexity be $O(\log r)$. It can be shown that the algorithm of the paper \cite{2DIMMIN}, applied to the GCD problem, give us complexity $O(s^2)$, for $s$ be binary encoding length of input, which matches the Euclid's algorithm complexity.

{\bf Acknowledgments}

This work was supported by the Russian Science Foundation Grant No. 17-11-01336.

\end{document}